
\documentclass[11pt,a4paper,english,reqno]{amsart}
\usepackage{babel}
\usepackage[latin1]{inputenc}
\usepackage[T1]{fontenc}
\usepackage{amsmath}
\usepackage{amsthm}
\usepackage{amsfonts}
\usepackage{indentfirst}
\usepackage{graphicx}
\usepackage{bm}

\usepackage{color}

\usepackage{xpicture}

\usepackage{amssymb}
\usepackage[mathscr]{eucal}

\textwidth=15cm \textheight=22cm \topmargin=0.5cm \oddsidemargin=0.5cm
\evensidemargin=0.5cm \advance\headheight1.15pt

\newtheorem{teo}{Theorem}[section]
\newtheorem{de}[teo]{Definition}
\newtheorem{pro}[teo]{Proposition}
\newtheorem{cor}[teo]{Corollary}
\newtheorem{rem}[teo]{Remark}
\newtheorem{lem}[teo]{Lemma}

\newtheorem{conj}[teo]{Conjecture}
\newtheorem*{theorem*}{Theorem}

\newcommand{\co}{{\mathcal O}}

\newcommand{\gp}{\mathbb{P}}

\newcommand{\agot}{{\mathfrak a}}

\renewcommand{\int}{{\rm int}}

\newcommand{\betabarra}{\bar{\beta}}

\title[Minimal plane valuations]{Minimal plane valuations}

\author{Carlos~Galindo, Francisco~Monserrat \and Julio-Jos\'e~Moyano-Fern\'andez}
\curraddr{\texttt{Carlos Galindo and Julio Jos\'e Moyano-Fern\'andez:} Instituto Universitario de Ma\-te\-m\'a\-ti\-cas y Aplicaciones de
Castell\'on and Departamento de Matem\'aticas, Universitat Jaume I,
Campus de Riu Sec. s/n, 12071 Castell\'{o} de la Plana (Spain).} \email{
galindo@mat.uji.es; moyano@uji.es} \curraddr{\texttt{Francisco Monserrat:} Instituto Universitario de
Matem\'atica Pura y Aplicada, Universidad Polit\'ecnica de Valencia,
Camino de Vera s/n, 46022 Valencia (Spain).}
\email{framonde@mat.upv.es}
\date{}
\thanks{The authors were partially supported by the Spanish Government Ministerio de Econom\'ia, Industria y Compe\-ti\-ti\-vi\-dad/FEDER, grants MTM2012-36917-C03-03, MTM2015-65764-C3-2-P and MTM2016-81735-REDT, as well as by Universitat Jaume I, grant P1-1B2015-02.}
\subjclass[2010]{14B05, 14C20, 13A18, 16W60.}
\date{}

\begin{document}

\maketitle

\begin{abstract}
We consider the value $\hat{\mu} (\nu) = \lim_{m \rightarrow \infty} m^{-1} a(mL)$, where $a(mL)$ is the last value of the vanishing sequence of $H^0(mL)$ along a divisorial or irrational valuation $\nu$ centered at $\mathcal{O}_{\mathbb{P}^2,p}$, $L$ (respectively, $p$) being a line (respectively, a point) of the projective plane $\mathbb{P}^2$ over an algebraically closed field. This value contains, for valuations, similar information as that given by Seshadri constants for points. It is always true that $\hat{\mu} (\nu) \geq \sqrt{1 / \mathrm{vol}(\nu)}$ and minimal valuations are those satisfying the equality. In this paper, we prove that the Greuel-Lossen-Shustin Conjecture implies a variation of the Nagata Conjecture involving minimal valuations (that extends the one stated in \cite{d-h-k-r-s} to the whole set of divisorial and irrational valuations of the projective plane) which also implies the original Nagata's conjecture. We also provide infinitely many families of minimal very general  valuations with an arbitrary number of Puiseux exponents, and an asymptotic result
that can be considered as evidence in the direction of the above mentioned conjecture \cite{d-h-k-r-s}.
\end{abstract}

\section{Introduction}

In \cite{b-k-m-s} an analogue of the Seshadri constant for real valuations $\nu$ is introduced. We denote this quantity by $\hat{\mu}(\nu)$. This value turns out to be very hard to compute. Considering rank one valuations of the projective plane and denoting $\mathrm{vol}(\nu)$ the volume of the valuation $\nu$, it holds that $\hat{\mu}(\nu) \geq \sqrt{1/\mathrm{vol}(\nu)}$ and $\nu$ is said to be minimal whenever the equality is satisfied. Minimal valuations of the complex projective plane whose dual graphs are determined by a unique real number are studied in \cite{d-h-k-r-s}, where the authors conjecture that every very general valuation as above such that $1/\mathrm{vol}(\nu) \geq 9$ is minimal. There, it is also proved that this conjecture implies Nagata's conjecture and is implied by the Greuel-Lossen-Shustin Conjecture \cite{greuel}. The present paper has two goals. Firstly, following the ideas in \cite{d-h-k-r-s}, we extend the above conjecture and implications to {\it any} divisorial or irrational valuation of the projective plane. Our development works for any algebraically closed field, and we only fall on complex numbers when the Greuel-Lossen-Shustin Conjecture is needed. Secondly, we prove an asymptotic result which gives support to the conjecture in \cite{d-h-k-r-s}.  In addition, we give an upper bound of $\hat{\mu}$ for a wide class of very general (see Definition \ref{vgd}) divisorial valuations of the projective plane, including those considered in \cite{d-h-k-r-s}. Also,  we provide infinitely many families of minimal very general valuations, with arbitrarily many Puiseux exponents, and determined only by their dual graphs. The three previous results are achieved with the help of valuations, non-positive at infinity.

According to some authors, valuations were introduced by Dedekind and Weber in 1882 to give a rigorous presentation of Riemann surfaces. It seems that the first axiomatic definition is due to K\"ursch\'{a}k in 1912; since then they have been used in several mathematical areas. In the forties and fifties, Zariski and Abhyankar applied valuation theory to resolution of singularities of algebraic varieties \cite{ja21, zar1, ja23, ja1, ja2} and, although Hironaka proved resolution of singularities in characteristic zero without using valuations, they seem to be the preferred tool to try to get resolution in positive characteristic \cite{tei}. Valuations are  essentially local objects which are expected to be useful for proving local uniformization. Recently, they also appear as a tool in global geometry and, at least certain classes of valuations
that reproduce the behavior of plane curves at infinity \cite{ja4}, allow us to determine  the cone of curves, the nefness of certain divisors or the finite generation of the Cox ring  of their attached surfaces \cite{cox}. Some  of the results in this paper will use this class of valuations.

Although there is no complete classification of valuations, valuations of quotient fields of  two-dimensional regular local rings $(R,\mathfrak{m})$, centered at $R$, were classified by Spivakovsky \cite{ja20} (see also \cite{ja15,fj}). We call those valuations plane valuations and they are in bijective correspondence with simple sequences of point blowing-ups starting with the blow-up of Spec$\,R$ at $\mathfrak{m}$. Spivakovsky's classification is based on the structure of the dual graphs of the valuations, which are defined by the exceptional divisors of the above mentioned sequences. These dual graphs can be determined by sequences of values called the Puiseux exponents of the valuations. The above mentioned classification contains five types of valuations but we are only interested in two of them, the so-called divisorial and irrational valuations. Puiseux exponents for them are of the form $\{\beta'_i\}_{i=0}^{g+1}$ and they are rational numbers except for the last one in the irrational case, which satisfies $\beta'_{g+1} \in \mathbb{R} \setminus \mathbb{Q}$.

Both divisorial and irrational plane valuations are Abhyankar valuations and their respective triples (rank, rational rank, transcendence degree) are $(1,1,1)$ and $(1,2,0)$. We recall that Abhyankar valuations admit local uniformization in any characteristic \cite{knaf} and satisfy a strengthened form of Izumi's Theorem \cite{e-l-s} (see also \cite{bou}). In \cite{e-l-s} the authors introduce the concept of volume of a rank one valuation $\nu$, defined as
\[
\mathrm{vol}(\nu) =\limsup_{\alpha \rightarrow \infty} \frac{\mathrm{length} (R / \mathcal{P}_\alpha)}{\alpha^n/n!},
\]
where $\dim R =n$ and
$\mathcal{P}_\alpha = \{f \in R | \nu(f) \geq \alpha\} \cup \{0\}$.
There, it is also proved that
\[
\mathrm{vol} (\nu) = \lim_{\alpha \rightarrow \infty} \frac{e(\mathcal{P}_\alpha)}{\alpha^n},
\]
where $e(\mathcal{P}_\alpha)$ is the multiplicity of the valuation ideal. In our plane case, the normalized volume of a valuation can be easily computed as $\betabarra_0^2/\betabarra_{g+1}$, where $\betabarra_0$ and $\betabarra_{g+1}$ are the first and last elements of the so-called sequence of maximal contacts of $\nu$ (see Sections  \ref{formulas} and \ref{volumen}).

Seshadri constants are objects that measure local positivity and were considered by Demailly for studying the Fujita's conjecture \cite{5Nach}. Analogous objects for ideal sheaves were introduced in \cite{c-ein-l} and named $s$-invariants. Rationality of those constants is an interesting issue which is  open even in the two-dimensional case. In \cite{b-k-m-s}, the authors introduce  the vanishing sequence along a valuation $\nu$ of $H^0(B)$, $B$ being a line bundle over a normal projective variety; we denote $\hat{\mu}_B (\nu) = \lim_{m \rightarrow \infty} m^{-1} a(mB)$, where $a(mB)$ is the last value for the line bundle $mB$ of the above mentioned sequence. This value is close to the $s$-invariant attached to the valuation and basically contains the same information for valuations as Seshadri constants for points; in addition, when applied to smooth projective surfaces, the blowing-up at one point and ample line bundles, it coincides with that constant in case of being irrational \cite{bauer} (see also \cite{Nach}).

In \cite{b-k-m-s}, it is proved that the value $\hat{\mu}_D(\nu)$,  for a real valuation $\nu$ centered at a point of an algebraic surface $X$ and an ample divisor $D$,  satisfies the inequality
\begin{equation}
\label{MIN}
\hat{\mu}_D(\nu) \geq \sqrt{D^2/\mathrm{vol}(\nu)}.
\end{equation}
In this paper, we are interested in divisorial and irrational plane valuations of the fraction field $K$ of $R=\mathcal{O}_{\mathbb{P}^2,p}$ centered at $R$, $\mathbb{P}^2 = \mathbb{P}^2_k$ being the projective plane over an algebraically closed field $k$ of arbitrary characteristic. Valuations of this type and satisfying, for a line $D$, the equality in (\ref{MIN}) are named {\it minimal} and their study is the objective of this paper. Let $(u,v)$ be local coordinates at $p$. The quantity $\hat{\mu}_D(\nu) = \hat{\mu}(\nu)$ is defined as follows. Let $\mu_d (\nu) = \max \{\nu(f) \;| \; f \in k[u,v], \; \mathrm{deg}(f) \leq d\}$ and then
 \[
 \hat{\mu}(\nu) = \lim_{d \rightarrow \infty} \frac{\mu_d (\nu)}{d}.
 \]

Recently in \cite{d-h-k-r-s}, minimal divisorial and irrational valuations (of $K$ centered at $R$) whose dual graphs are given by a unique positive rational or non-rational real value, and where $k=\mathbb{C}$, have been treated (see Remark \ref{quasi} for a description in our terminology of the mentioned specific case studied in \cite{d-h-k-r-s}). There, a variation of the Nagata Conjecture (implying that conjecture) in terms of minimality of very general valuations is given. Specifically, the conjecture  asserts that if $\nu$ is a very general divisorial or irrational valuation as above, whose inverse normalized volume is not less than 9, then $\nu$ is minimal. Notice also that the value $\hat{\mu}(\nu)$ is essential for computing the Newton-Okounkov bodies of flags given by divisors (and points on them) defining divisorial valuations as mentioned \cite{cil}.

This paper has two main goals. First, we extend to {\it any} divisorial and irrational valuation (of $K$ centered at $R$) the result in \cite{d-h-k-r-s} that asserts that the mentioned variation of the Nagata Conjecture (Conjecture \ref{conj2}) is true under the assumption that the so-called Greuel-Lossen-Shustin Conjecture (Conjecture \ref{conj1}) holds. More specifically, we prove
\begin{theorem*}[A]
Under the assumption that $k=\mathbb{C}$, the Greuel-Lossen-Shustin Conjecture implies that any very general divisorial or irrational valuation of the projective plane $\nu$ such that $[{\rm vol}(\nu^N)]^{-1}\geq 9$ is minimal, where $\nu^N$ is the normalization of $\nu$, obtained by dividing by $\nu(\mathfrak{m})$.
\end{theorem*}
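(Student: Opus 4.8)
The plan is to follow the strategy of \cite{d-h-k-r-s} but carried out for an arbitrary divisorial or irrational valuation rather than only those whose dual graph is governed by a single parameter. The starting point is the general lower bound $\hat{\mu}(\nu)\geq\sqrt{1/\mathrm{vol}(\nu^N)}$ from \eqref{MIN} (applied to a line $D$, so $D^2=1$); thus to prove minimality it suffices to establish the reverse inequality $\hat{\mu}(\nu)\leq\sqrt{1/\mathrm{vol}(\nu^N)}$ under the hypothesis $[\mathrm{vol}(\nu^N)]^{-1}\geq 9$. Recalling that $\mathrm{vol}(\nu^N)=\bb_0^2/\bb_{g+1}$ in the plane case, the target inequality becomes $\hat{\mu}(\nu)\leq\sqrt{\bb_{g+1}}/\bb_0$, i.e., $\mu_d(\nu)\leq d\sqrt{\bb_{g+1}}/\bb_0 + o(d)$ for the very general valuation $\nu$.

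The key steps I would carry out are as follows. First, reduce the computation of $\mu_d(\nu)$ to a question about the existence of plane curves of degree $d$ with prescribed value along $\nu$: if $\mu_d(\nu)>d\sqrt{\bb_{g+1}}/\bb_0+o(d)$ then there is a curve $C$ of degree $d$ with $\nu(C)$ exceeding this bound. Second, translate this into a statement about a curve passing through the infinitely near points $p=p_1,p_2,\dots$ determined by $\nu$ with multiplicities dictated by the proximity inequalities, and compare the virtual dimension of the corresponding linear system to $-1$; when the inverse volume is at least $9$, a dimension count forces $C$ to contain a ``special'' curve of small degree through these points, bringing us into the range where the Greuel-Lossen-Shustin Conjecture applies. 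Third, invoke the Greuel-Lossen-Shustin Conjecture (Conjecture \ref{conj1}) to guarantee that an irreducible curve with the requisite singularity at $p$ — one realizing the value $\nu$ efficiently — actually exists, or conversely to rule out the overly singular configuration; combined with a specialization/semicontinuity argument this yields the bound on $\hat{\mu}(\nu)$ for \emph{very general} $\nu$. Throughout, the reduction from a general dual graph to the single-parameter case is handled by treating the sequence of maximal contacts $\{\bb_i\}$ and the associated approximating curves of maximal contact, rather than one real parameter; the valuations non-positive at infinity furnish the auxiliary curves needed to control $\mu_d$ from above.

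The main obstacle I anticipate is precisely the passage from the single-parameter setting of \cite{d-h-k-r-s} to the general one: in the single-parameter case the relevant linear systems and the numerical invariants ($\bb_0$, $\bb_{g+1}$, the self-intersection of the relevant divisor on the blown-up surface) are transparent, whereas for an arbitrary divisorial or irrational valuation one must keep track of the full tower of infinitely near points and the proximity relations, and show that the dimension estimate that triggers the application of Greuel-Lossen-Shustin still goes through with the bound $9$. Establishing this uniform estimate — essentially that $[\mathrm{vol}(\nu^N)]^{-1}\geq 9$ is exactly the threshold making the virtual dimension argument work irrespective of the combinatorial type of the dual graph — is the technical heart of the proof. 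A secondary point is ensuring the whole argument is insensitive to the characteristic of $k$ except where the Greuel-Lossen-Shustin Conjecture (stated over $\mathbb{C}$) is used, which is why the statement restricts to $k=\mathbb{C}$.
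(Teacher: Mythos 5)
Your proposal correctly identifies the target (reduce to the reverse inequality $\hat{\mu}(\nu)\leq\sqrt{\bar\beta_{g+1}}/\bar\beta_0$) and correctly observes that the reduction from a general dual graph to the single-parameter case of \cite{d-h-k-r-s} is the main point, but it misses the key technical mechanism the paper uses, and it mixes in ideas that in fact belong to a different part of the paper.

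The paper's proof is a clean contradiction argument built around two ingredients you do not mention. The first is the notion of an \emph{almost P-sufficient} configuration (for a unibranch configuration: $9\sum_i m_i^2-\bigl(\sum_i m_i\bigr)^2\geq 0$, where $\{m_i\}$ is the sequence of values of $\nu$). The second is Lemma \ref{lll1}: if $\mathcal{C}_\nu$ is almost P-sufficient and the Greuel-Lossen-Shustin Conjecture holds, then $\hat{\mu}(\nu)\leq 3\bar\beta_0$. With these, the proof proceeds as follows: set $d_k:=\lfloor k\sqrt{\bar\beta_{g+1}}\rfloor$ and $\alpha_k:=k\bar\beta_{g+1}$; if $h^0\bigl(\mathcal{O}_{\mathbb{P}^2}(d_k)\otimes\mathcal{P}_{\alpha_k}\bigr)\geq 1$, then GLS applied to the weighted configuration $(\mathcal{C}_\nu,(km_i))$ yields a numerical inequality that, after simplification, is exactly the almost-P-sufficiency condition; Lemma \ref{lll1} then gives $\hat{\mu}(\nu)\leq 3\bar\beta_0$, which together with the lower bound $\hat{\mu}(\nu)\geq\sqrt{\bar\beta_{g+1}}$ forces $[\mathrm{vol}^N(\nu)]^{-1}\leq 9$, contradicting the hypothesis. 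Hence $h^0=0$, so $\mu_{d_k}(\nu)<\alpha_k$ by Lemma \ref{lll2}, and letting $k\to\infty$ gives $\hat{\mu}(\nu)\leq\sqrt{\bar\beta_{g+1}}$. Your description of the argument — a ``dimension count forces $C$ to contain a special curve of small degree'' — does not match this; there is no such special-curve step, and the way GLS enters (as the trigger for almost P-sufficiency in the contradiction branch, and again inside Lemma \ref{lll1}) is quite different from ``guaranteeing that an irreducible curve with the requisite singularity exists.''

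Two further points. You do not mention the continuity result (Proposition \ref{proposiciondos}, Lipschitz continuity of $\hat{\mu}^N$ on the parameter space $V_\delta$), which the paper needs at the very start to reduce to the case of divisorial $\nu$ with strict inequality $[\mathrm{vol}^N(\nu)]^{-1}>9$; without it, the argument would only apply to part of the range. And the ``valuations non-positive at infinity'' you cite as the tool to control $\mu_d$ from above are in fact used only for Theorems B, C and D in Sections \ref{VERY} and \ref{laseis}, not for Theorem A — you are conflating two different parts of the paper. The ``technical heart'' you anticipate (a uniform virtual-dimension estimate with threshold $9$) is not where the difficulty lies; the actual hard work is Proposition \ref{proposiciondos} together with the almost-P-sufficiency framework.
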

To this purpose, we consider a classical tool for the treatment of curves and valuations in the positive characteristic case, the Hamburger-Noether expansions \cite{campillo, d-g-n}. Section \ref{PRE} of the paper provides the information we need with respect to the Hamburger-Noether expansions and the most useful invariants of our valuations and Section \ref{metric} proves the technical results to be used to show that Conjecture \ref{conj1} implies Conjecture \ref{conj2}. Here, our main result, Proposition \ref{proposiciondos}, extends, by a very different procedure, the result  on the continuity of the function $\hat{\mu}$ given in \cite{d-h-k-r-s} to arbitrary valuations. Our definition of $\hat{\mu}$ does not depend on the characteristic of the field $k$ and our reasoning does not depend on the characteristic, but our conclusion on the variation of the Nagata Conjecture must be over the complex field $\mathbb{C}$ because Conjecture \ref{conj2} was stated over $\mathbb{C}$.

The second goal of the paper is developed in Sections \ref{VERY} and \ref{laseis}. It is valid for arbitrary characteristic and, as a fundamental tool, we use divisorial valuations, non-positive at infinity, which were introduced in \cite{cox} (see Definition \ref{NON}). Here, our first main result, Theorem \ref{zxz} in the paper, provides an upper bound of $\hat{\mu}$ for a wide class of very general divisorial plane valuations.
\begin{theorem*}[B]
Let $\nu$ be a very general divisorial valuation of the projective plane (which is not the $\mathfrak{m}$-adic one), set $\{\betabarra_i\}_{i=0}^{g+1}$ its sequence of maximal contact values and assume that $\betabarra_1^2\geq \betabarra_{g+1}$. Then $\hat{\mu}(\nu)\leq \betabarra_1$. If, in addition, the set $A_{\nu}:=\{a\in \mathbb{Z}\mid 1\leq a\leq \lfloor \betabarra_1/\betabarra_0 \rfloor \mbox{ and } a^2\betabarra_0^2\geq \betabarra_{g+1} \}$ is not empty, then $\hat{\mu}(\nu)\leq \betabarra_0\cdot \min(A_{\nu})$.
\end{theorem*}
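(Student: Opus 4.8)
The plan is to realize $\hat\mu(\nu)$ via curves of small degree passing appropriately through the infinitely near points determining $\nu$, using valuations non-positive at infinity as the main computational device. First I would recall that for a very general divisorial valuation $\nu$ with maximal contact values $\{\betabarra_i\}_{i=0}^{g+1}$, the quantity $\hat\mu(\nu) = \lim_{d\to\infty}\mu_d(\nu)/d$ where $\mu_d(\nu)=\max\{\nu(f)\mid \deg f\le d\}$. To get an upper bound $\hat\mu(\nu)\le c$ it suffices to produce, for infinitely many $d$, a curve of degree $d$ whose intersection behavior at $p$ realizes a value $\nu$-proportional to $cd$; equivalently, one bounds $\mu_d(\nu)$ from above by exhibiting that no degree-$d$ curve can do better, which is where non-positivity at infinity enters.

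The key steps, in order: (1) Use the hypothesis $\betabarra_1^2\ge \betabarra_{g+1}$ to show that the divisorial valuation $\nu$ is (or can be compared to) a valuation non-positive at infinity in the sense of Definition~\ref{NON}; this is the numerical condition guaranteeing that the associated surface has a well-behaved cone of curves. (2) For such a valuation, the curve computing $\mu_d(\nu)$ can be taken to be a union of lines through $p$ (or a single curve with a line as a maximal contact element at the first stage), and the relevant intersection computation on the blown-up surface $X_\nu$ gives $\mu_d(\nu)\le d\,\betabarra_1$ by a Cauchy–Schwarz / Hodge index argument against the divisor class pulled back from $\nu$. Dividing by $d$ and letting $d\to\infty$ yields $\hat\mu(\nu)\le\betabarra_1$. (3) For the refinement, if $a\in A_\nu$ then $a\le\lfloor\betabarra_1/\betabarra_0\rfloor$ ensures $a$ lines through $p$ is a legitimate configuration of degree $\le \betabarra_1/\betabarra_0$ worth of "multiplicity budget'', while $a^2\betabarra_0^2\ge\betabarra_{g+1}$ is exactly the non-positive-at-infinity condition for the valuation scaled so that the first maximal contact curve has degree $a$; repeating the estimate in (2) with this sharper configuration gives $\mu_d(\nu)\le d\,\betabarra_0 a$, and minimizing over $a\in A_\nu$ gives the second bound.

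The main obstacle I expect is step (2): showing rigorously that the optimal degree-$d$ curve for computing $\mu_d(\nu)$ may be replaced by a configuration controlled purely by $\betabarra_1$ (resp.\ $\betabarra_0\min(A_\nu)$). This requires the machinery of valuations non-positive at infinity — specifically that on the surface $X_\nu$ obtained by the sequence of blow-ups, the class $dH^* - \sum m_i E_i^*$ (with $H$ the line class and $m_i$ the multiplicities prescribed by $\nu$) is effective/nef only in the regime controlled by the stated numerical inequalities, so that the Hodge index theorem forces $\nu(f)\le$ the claimed bound. One must also verify the "very general'' hypothesis is used correctly: it guarantees that the infinitely near base points impose independent conditions, so the dimension counts underlying the effectivity statements are exact rather than merely inequalities. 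Once the non-positivity-at-infinity reduction is in place, the remaining intersection-theoretic estimate is a routine computation with the self-intersection $\betabarra_0^2/\betabarra_{g+1}$ (the normalized volume) and the first maximal contact value $\betabarra_1$.
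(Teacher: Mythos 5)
Your outline identifies the right objects (valuations non-positive at infinity, the numerical condition $\nu(v)^2\geq\betabarra_{g+1}$), but the core mechanism of the paper's argument is missing and the replacement mechanism you sketch does not close.

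The crucial move in the paper is \emph{not} an intersection-theoretic estimate on $X_\nu$ for the given very general $\nu$. Indeed, a very general $\nu$ is emphatically \emph{not} non-positive at infinity: the line $Z=0$ does not pass through the special free points of $\mathcal{C}_\nu$. The paper instead picks an auxiliary valuation $\nu'\in\mathcal{V}_\Gamma$ \emph{with the same dual graph} $\Gamma$ but whose centers are arranged so that the successive strict transforms of the line at infinity pass through the first $\lfloor\betabarra_1/\betabarra_0\rfloor$ (respectively $\min(A_\nu)$) free points. For \emph{this} $\nu'$ one has $\nu'(v)=\betabarra_1$ (resp.\ $\betabarra_0\cdot\min(A_\nu)$), and Proposition~\ref{DeltaD} shows $\nu'$ is non-positive at infinity. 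Then Proposition~\ref{51} gives $\hat\mu(\nu')=\nu'(v)$ by a completely elementary computation: writing a degree-$d$ polynomial $g(u,v)=\tilde g(x,y)/x^d$ with $\tilde g\in k[x,y]$, one gets $\nu'(g)=\nu'(\tilde g)+d\,\nu'(v)\le d\,\nu'(v)$ directly from non-positivity at infinity. Finally, the hypothesis ``very general'' enters only through the semicontinuity statement Corollary~\ref{uuu} (which rests on Proposition~\ref{yyy}): $\hat\mu^N$ attains its \emph{minimum} over $\mathcal{V}_\Gamma$ at very general points, hence $\hat\mu(\nu)\le\hat\mu(\nu')$. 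None of these three steps --- replacing $\nu$ by a specially positioned $\nu'$ of the same dual graph, the identity $g=\tilde g/x^d$, and semicontinuity over $\mathcal{V}_\Gamma$ --- appears in your sketch, which instead attributes the role of genericity to ``independent conditions/dimension counts'' (not what is being used here).

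Your proposed Hodge-index/Cauchy--Schwarz estimate in step~(2) also does not produce the bound as stated. The natural Cauchy--Schwarz estimate reads $\nu(f)=\sum m_i r_i\le\sqrt{\sum m_i^2}\,\sqrt{\sum r_i^2}=\sqrt{\betabarra_{g+1}}\,\sqrt{\sum r_i^2}$, and to conclude $\nu(f)\le d\sqrt{\betabarra_{g+1}}\le d\betabarra_1$ one would need $\sum r_i^2\le d^2$, i.e.\ non-negative self-intersection of the strict transform of every competing curve. That is precisely the minimality-type statement one is trying to bound, not a free ingredient. Without the comparison to the auxiliary non-positive-at-infinity valuation $\nu'$ and semicontinuity, the estimate does not go through.
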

Next we state our second main result (see Theorem \ref{rock} for the precise details).

\begin{theorem*}[C] Fix any divisorial valuation, different from the $\frak{m}$-adic one. Let $\Gamma$ be its dual graph. Dual graphs of minimal very general valuations can be obtained from $\Gamma$ by adjoining chains with a suitable number of vertices (corresponding to divisors obtained by blowing-up free points) at the beginning and at the end of $\Gamma$.
\end{theorem*}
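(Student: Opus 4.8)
The plan is to combine Theorem B with the lower bound $\hat\mu(\nu)\ge\sqrt{1/\mathrm{vol}(\nu^N)}=\bar\beta_{g+1}^{1/2}/\bar\beta_0$ (the normalized form of \eqref{MIN} for a line), and to exploit the fact that adjoining a chain of free-point blow-ups at the \emph{beginning} of $\Gamma$ amounts to multiplying the valuation by an $\mathfrak m$-adic factor (it replaces $\nu$ by a composite / higher first Puiseux exponent), while adjoining a chain of free points at the \emph{end} of $\Gamma$ adds a final maximal contact value $\bar\beta_{g+1}$ arbitrarily large. Concretely, starting from $\Gamma$ with maximal contact values $\{\bar\beta_i\}_{i=0}^{g+1}$, I would first prepend $s$ vertices: this produces a divisorial valuation $\nu_s$ whose dual graph still has $g+1$ "curve-type" vertices but whose $\bar\beta_0$ is scaled, so that the new ratio $\bar\beta_1/\bar\beta_0$ can be made $\ge 1$, i.e. one forces $\bar\beta_1^2\ge\bar\beta_0^2\ge\bar\beta_{g+1}$ (after the second operation below makes $\bar\beta_{g+1}$ controlled). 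Then I would append $t$ free points at the end of $\Gamma$: each such blow-up increases $g$ by passing through a new last maximal contact value, and by choosing $t$ appropriately one arranges the resulting last value $\bar\beta_{g'+1}'$ to satisfy both $\bar\beta_1'^2\ge\bar\beta_{g'+1}'$ and $A_{\nu'}=\{1\}$, i.e. $\bar\beta_0'^2\ge\bar\beta_{g'+1}'$ with $\lfloor\bar\beta_1'/\bar\beta_0'\rfloor\ge 1$. Theorem B then gives $\hat\mu(\nu')\le\bar\beta_0'\cdot\min(A_{\nu'})=\bar\beta_0'$.

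The second step is to check that this upper bound actually \emph{meets} the lower bound $\hat\mu(\nu')\ge\bar\beta_{g'+1}'^{1/2}/\bar\beta_0'$, so that $\nu'$ is minimal. For this I need $\bar\beta_0'\le\bar\beta_{g'+1}'^{1/2}/\bar\beta_0'$, equivalently $\bar\beta_0'^2\le\bar\beta_{g'+1}'$ — but wait, Theorem B required the \emph{reverse} inequality $\bar\beta_0'^2\ge\bar\beta_{g'+1}'$ for $A_{\nu'}\ne\emptyset$. The resolution is that one does \emph{not} use the second half of Theorem B but only the first half: arrange only $\bar\beta_1'^2\ge\bar\beta_{g'+1}'$, so Theorem B yields $\hat\mu(\nu')\le\bar\beta_1'$, and then choose the prepended chain so that $\bar\beta_1'=\bar\beta_{g'+1}'^{1/2}/\bar\beta_0'$ exactly (or, since we only have integer freedom, so that a subsequence of such valuations has $\bar\beta_1'\cdot\bar\beta_0'/\bar\beta_{g'+1}'^{1/2}\to 1$; but because everything here is rational and the operations are discrete, one shows the equality can be hit on the nose for suitable $s,t$). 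Thus the core computation is: express $\bar\beta_1'$, $\bar\beta_0'$ and $\bar\beta_{g'+1}'$ of the modified graph as explicit functions of $s$, $t$ and the original data $\{\bar\beta_i\}$ (using the recursions for maximal contact values under free-point blow-ups recalled in Section \ref{formulas}), and solve $\bar\beta_1'(s,t)^2\,\bar\beta_0'(s,t)^2=\bar\beta_{g'+1}'(s,t)$ together with $\bar\beta_1'(s,t)^2\ge\bar\beta_{g'+1}'(s,t)$ in nonnegative integers.

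I would organize the write-up as follows. First, set up notation: let $\nu$ have dual graph $\Gamma$, maximal contact values $\bar\beta_0<\bar\beta_1<\dots<\bar\beta_{g+1}$, and describe precisely the two families of modifications (prepend $s$ free vertices; append $t$ free vertices) and the resulting maximal contact values $\bar\beta_i(s,t)$. Second, record the arithmetic: prepending rescales the whole sequence and adds leading vertices, appending adds one new final value of the shape $\bar\beta_{g+1}+(\text{linear in }t)$; derive closed forms. Third, verify that for the modified valuation $\nu'$ the hypotheses of Theorem B hold for all large $s,t$, giving $\hat\mu(\nu')\le\bar\beta_1'$. Fourth, impose minimality: solve the Diophantine condition that $\bar\beta_1'$ equals the Nagata-type lower bound $\sqrt{1/\mathrm{vol}(\nu'^N)}$; show solutions exist (indeed infinitely many, by a congruence/growth argument). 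Fifth, remark that the very-general hypothesis is preserved and that the construction is insensitive to the characteristic, since Theorem B and the lower bound \eqref{MIN} are. The main obstacle is the fourth step: the discreteness of $s$ and $t$ means one cannot freely tune a real parameter, so one must show the two-parameter integer family $(\bar\beta_1'(s,t),\bar\beta_0'(s,t),\bar\beta_{g'+1}'(s,t))$ actually contains solutions of the minimality equation — this is where a careful choice (e.g. fixing a residue of $s$ modulo $\bar\beta_0$ and letting $t$ sweep a quadratic residue class) is needed, and where the bulk of the genuine work lies.
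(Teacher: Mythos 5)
Your overall plan---prepend and append chains of free vertices and then pin down the numbers by matching an upper bound for $\hat\mu$ against the lower bound from~\eqref{MIN}---is the right one, and the choice of Theorem~B as the upper-bound tool is natural because it is ultimately proved from the same non-positivity-at-infinity machinery that the paper's proof of Theorem~C uses. However, there is a normalization error that derails the execution. Theorem~B bounds the \emph{unnormalized} quantity $\hat\mu(\nu')$, for which the correct lower bound is $\hat\mu(\nu')\geq\sqrt{\bar\beta_{g'+1}'}$; what you wrote, namely $\hat\mu(\nu')\geq\sqrt{\bar\beta_{g'+1}'}/\bar\beta_0'$, is the bound for $\hat\mu^N(\nu')$. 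This swap produced a spurious contradiction (``$\bar\beta_0'^2\leq\bar\beta_{g'+1}'$ versus $\bar\beta_0'^2\geq\bar\beta_{g'+1}'$'') which led you to abandon the second half of Theorem~B; with the correct bound there is no conflict at all. If $1\in A_{\nu'}$ (i.e.\ $\bar\beta_0'^2\geq\bar\beta_{g'+1}'$), then $\hat\mu(\nu')\leq\bar\beta_0'$, and minimality requires $\bar\beta_0'\leq\sqrt{\bar\beta_{g'+1}'}$, i.e.\ $\bar\beta_0'^2\leq\bar\beta_{g'+1}'$---the \emph{same} inequality in the other direction, so the clean target is exactly $\bar\beta_{g'+1}'=\bar\beta_0'^2$ (more generally $\bar\beta_{g'+1}'=a^2\bar\beta_0'^2$ for an integer $a\leq\lfloor\bar\beta_1'/\bar\beta_0'\rfloor$). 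Your final Diophantine equation $\bar\beta_1'^2\bar\beta_0'^2=\bar\beta_{g'+1}'$ therefore has the wrong scaling and is both unnecessarily restrictive and not the thing you actually need.

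The second point, which is where the real missing idea lies, is that once the target is stated correctly the ``Diophantine'' step is \emph{not} genuine work: one simply appends the exact number of free points needed to make $\bar\beta_{g'+1}'$ hit the prescribed square. This is what the paper does (Proposition~\ref{MM}, Theorem~\ref{rock}): prepend $k$ free points, producing maximal contact values $\{\bar\beta_0,\bar\beta_1+k\bar\beta_0,\ldots,\bar\beta_{g+1}+k\bar\beta_0^2\}$, choose the number $a$ of initial free points through which the line at infinity passes so that $\nu(v)=a\bar\beta_0$ and $\nu(v)^2\geq\bar\beta_{g+1}+k\bar\beta_0^2$ (non-positivity, Proposition~\ref{DeltaD}), then append exactly $\nu(v)^2-\bar\beta_{g+1}-k\bar\beta_0^2$ free points; this forces the new last maximal contact value to equal $\nu(v)^2$, while Proposition~\ref{51} gives $\hat\mu(\nu')=\nu'(v)=\nu(v)$, so $\hat\mu(\nu')^2$ is the inverse volume on the nose. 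The prepended $k$ exists because $k\geq\iota(\omega)$ guarantees such an $a$ is available. Semicontinuity (Corollary~\ref{uuu}) then carries minimality to \emph{very general} valuations in $\mathcal V_{\Gamma_{\omega,k}^a}$. So the construction is explicit and unconditional; you should replace the ``solve a congruence/growth Diophantine system'' step with this direct count of appended free points, and work with $\hat\mu$ (not $\hat\mu^N$) throughout.
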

Notice that this result provides (infinitely many) families of minimal very general divisorial valuations (of $K$ centered at $R$). These families are characterized only by the dual graphs of their valuations.

The described minimal valuations cannot be defined by satellite divisors;
however we prove the following asymptotic result.
\begin{theorem*}[D]
For each positive real number $t$, denote by $\nu_t$ a very general quasi-monomial valuation (in the sense of the definition given in \cite{d-h-k-r-s}) with normalized volume $1/t$. Then
\[
\lim_{t \rightarrow \infty} \frac{\hat{\mu}(\nu_t^N)}{\sqrt{t}} =1.
\]
\end{theorem*}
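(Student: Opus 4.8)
The plan is to sandwich $\hat{\mu}(\nu_t^N)$ between $\sqrt{t}$ and a quantity tending to $\sqrt{t}$. The lower bound is immediate from (\ref{MIN}): since $\nu_t$ has normalized volume $1/t$ we have $\mathrm{vol}(\nu_t^N)=1/t$, so taking $D$ to be a line of $\gp^2$ (hence $D^2=1$) gives
\[
\hat{\mu}(\nu_t^N)\;\geq\;\sqrt{D^2/\mathrm{vol}(\nu_t^N)}\;=\;\sqrt{t},
\]
and therefore $\hat{\mu}(\nu_t^N)/\sqrt{t}\geq 1$ for every $t>0$. It remains to produce a matching asymptotic upper bound.

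Assume first that $t$ is rational and large, so that $\nu_t$ is a very general divisorial valuation, different from the $\mathfrak{m}$-adic one. By the translation of the quasi-monomial valuations of \cite{d-h-k-r-s} into our terminology (Remark \ref{quasi}), the sequence of maximal contact values $\{\betabarra_i\}_{i=0}^{g+1}$ of $\nu_t$ satisfies $\betabarra_1/\betabarra_0=t$ and $\betabarra_{g+1}/\betabarra_0^2=[\mathrm{vol}(\nu_t^N)]^{-1}=t$. In particular $\betabarra_1^2\geq\betabarra_{g+1}$ (this is just $t^2\geq t$), so Theorem \ref{zxz} applies, and the set $A_{\nu_t}$ of that statement equals
\[
A_{\nu_t}=\{a\in\gz\mid 1\leq a\leq\lfloor t\rfloor\text{ and }a^2\geq t\}=\{a\in\gz\mid \lceil\sqrt{t}\,\rceil\leq a\leq\lfloor t\rfloor\},
\]
which for $t$ large is nonempty with minimum $\lceil\sqrt{t}\,\rceil$. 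Hence Theorem \ref{zxz} gives $\hat{\mu}(\nu_t)\leq\betabarra_0\,\lceil\sqrt{t}\,\rceil$, and dividing by $\betabarra_0=\nu_t(\mathfrak{m})$ (using that $\hat{\mu}$ scales linearly in the valuation) we get $\hat{\mu}(\nu_t^N)\leq\lceil\sqrt{t}\,\rceil\leq\sqrt{t}+1$. Combined with the lower bound, $1\leq\hat{\mu}(\nu_t^N)/\sqrt{t}\leq 1+1/\sqrt{t}$ for all large rational $t$.

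To remove the rationality assumption I would use the continuity of the map $t\mapsto\hat{\mu}(\nu_t^N)$ on $(0,\infty)$ provided by Proposition \ref{proposiciondos} (which extends to arbitrary valuations the continuity statement proved in \cite{d-h-k-r-s} for precisely this family): given an irrational $t_0$, pick rationals $t_n\to t_0$ and pass to the limit in $\hat{\mu}(\nu_{t_n}^N)\leq\sqrt{t_n}+1$ to obtain $\hat{\mu}(\nu_{t_0}^N)\leq\sqrt{t_0}+1$. Thus $1\leq\hat{\mu}(\nu_t^N)/\sqrt{t}\leq 1+1/\sqrt{t}$ for all sufficiently large $t>0$, and letting $t\to\infty$ proves the statement.

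The real content is Theorem \ref{zxz}: the crux is that a very general quasi-monomial valuation of normalized volume $1/t$ has both $\betabarra_1/\betabarra_0=t$ and $\betabarra_{g+1}/\betabarra_0^2=t$, which is exactly what forces $\min(A_{\nu_t})=\lceil\sqrt{t}\,\rceil$ and hence an upper bound for $\hat{\mu}(\nu_t^N)$ of order $\sqrt{t}$ --- the cruder estimate $\hat{\mu}(\nu_t)\leq\betabarra_1$ coming from the same theorem would only give $\hat{\mu}(\nu_t^N)\leq t$, which is useless here. The steps that require care are the verification of these two identities for the maximal contact values of $\nu_t$ and the check that the continuity of Proposition \ref{proposiciondos} genuinely applies along the one-parameter family $\{\nu_t\}_{t>0}$; everything else is routine.
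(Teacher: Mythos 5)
Your proof is correct and follows essentially the same approach as the paper's: exhibit an auxiliary non-positive-at-infinity valuation with $\nu(v)=\lceil\sqrt{t}\,\rceil\betabarra_0$, deduce $\hat\mu^N(\nu_t)\le\lceil\sqrt{t}\,\rceil$ by semicontinuity, and extend from rational to all $t$ via the Lipschitz continuity of Proposition \ref{proposiciondos}. The only difference is cosmetic: you invoke Theorem \ref{zxz} as a black box, whereas the paper re-derives that particular bound directly from Propositions \ref{DeltaD} and \ref{51} together with Corollary \ref{uuu}.
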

This result, Theorem \ref{teo57} in the paper, can be considered as evidence in the direction of Conjecture \ref{conj2} (more specifically, of the restricted form of the conjecture stated in \cite{d-h-k-r-s}).

\section{Preliminaries} \label{PRE}

\subsection{Plane valuations}

\label{pdv}

Let $F$ be a field, $G$ a totally ordered abelian group, and set $F^* :=F \setminus\{0\}$. A valuation of $F$ is a surjective map $\nu: F^* \rightarrow G$ such that
$$
\nu(f+g) \geq \min \{\nu(f), \nu(g)\} \ \  \ \mbox{and} \ \ \  \nu(fg) = \nu (f) + \nu (g)
$$
for $f,g \in F^*$. The group $G$ is called the value group of the valuation $\nu$.

Assume that $F$ is the quotient field of a local regular ring $(R,\mathfrak{m})$ and that the residue field $k:=R/\mathfrak{m}$ is contained in $R$. We write $R_\nu$ for the valuation ring of $\nu$, that is
$$
R_\nu = \{f \in F^* | \nu(f) \geq 0 \} \cup \{0\}.
$$
This ring is also a local ring whose maximal ideal is $\mathfrak{m}_\nu = \{f \in F^* | \nu(f) >0 \} \cup \{0\}$. The valuation $\nu$ is said to be centered at $R$ whenever $R \cap \mathfrak{m}_\nu = \mathfrak{m}$.

There are several invariants of $\nu$ introduced by Zariski \cite{zar1,ja23} for the classification of plane valuations---a certain kind of valuations which we will define later: the {\it rank} $\mathrm{rk} (\nu)$ of $\nu$, which is the Krull dimension of the ring $R_\nu$, and the {\it rational rank} of $\nu$, written r.rk($\nu$), which is the dimension of the $\mathbb{Q}$-vector space $G \otimes_\mathbb{Z} \mathbb{Q}$. Moreover, if one considers the residue field $k_\nu:=R_\nu /\mathfrak{m}_\nu$, then the transcendence degree of the field extension $k_\nu / k$ is called the transcendence degree of $\nu$, denoted by $\mathrm{tr.deg} (\nu)$.

We assume that the field $k$ is algebraically closed. It is a result by Abhyankar \cite{ja2} that
\begin{equation} \label{eqn:Zinequalities}
{\rm rk}(\nu)+{\rm tr.deg}(\nu) \leq {\rm r.rk}(\nu)+{\rm
tr.deg}(\nu) \leq \dim(R).
\end{equation}
If the second inequality turns out to be an equality, then $G$ is isomorphic to $\mathbb{Z}^{{\rm r.rk}(\nu)}$ with some ordering, and when the equality ${\rm rk}(\nu)+{\rm tr.deg}(\nu)=\dim R$ holds, then $G$ is isomorphic to $\mathbb{Z}^{{\rm r.rk}(\nu)}$ with the lexicographical ordering.

For a valuation $\nu$ as above, we can attach to the ring $R$ the monoid
\[
S=\{\nu(f)|f \in R \setminus \{0\}\} \cup \{0\},
\]
which is called the {\it value semigroup} of $\nu$ associated to $R$. Ideals in $R$ which are contraction of ideals in $R_{\nu}$ are the so-called {\it valuation ideals}. They can be characterized as those ideals $\agot$ in $R$ of the form
\[
\{f \in R  | \nu(f) \geq \nu (\agot) \} \cup \{0\},
\]
where $\nu(\agot):=\min \{\nu (f) | f  \in \agot\setminus \{0\}\}$. In particular, when $G= \mathbb{Z}$, for a nonnegative integer $\alpha$, the ideal ${\mathcal P}_{\alpha} = \{f \in R \setminus \{0\} | \nu(f) \geq \alpha\} \cup \{0\}$ is a valuation ideal in $R$.
\medskip

A valuation $\nu$ is called {\it divisorial} if both $\mathrm{rk} (\nu) =1$ and $\mathrm{tr.deg} (\nu) = \dim R -1 $. Given a birational morphism $\pi: X \rightarrow \mathrm{Spec} R$ and an irreducible component $D$ of $\pi^{-1} (\mathfrak{m})$ such that $\mathcal{O}_{X,D}$ is a discrete valuation ring, the map
\begin{equation}\label{ooo}
\nu(f) = c \cdot\mathrm{ord}_D (f), \ \   \  f \in F
\end{equation}
(for some non-zero constant $c$) defines a divisorial valuation; in fact, all divisorial valuations centered at $R$ are of this form \cite[Remark 2.7]{ja20}.  Notice that for a (commutative, with unit) ring $T$, an ideal $\mathfrak{n}$ of $T$ and an element $h \in T$, one may define the {\it multiplicity} of $h$ at $\mathfrak{n}$ as
$$
\mathrm{mult}_\mathfrak{n} (h)= \max \{ s \in \mathbb{N} \cup \{0\} | h \in \mathfrak{n}^s\}.
$$
In addition, consider an integral scheme $X$ with function field $F$, and an integral subscheme $D$ such that $\mathcal{O}_{X,D}$ is a regular local ring with maximal ideal $\mathfrak{n}$; for an element $f=a/b \in F$, with $a,b \in R$, the {\it order} of $f$ at $D$ is defined to be
$$
\mathrm{ord}_D (f) = \mathrm{mult}_\mathfrak{n} \pi^* a - \mathrm{mult}_\mathfrak{n} \pi^* b.
$$

Divisorial valuations as in (\ref{ooo}) associated with different non-zero constants $c$ are called \emph{equivalent}. Along this paper, for each equivalence class of divisorial valuations, we will only consider two of them: the valuation $\nu$ corresponding to $c=1$ (this will be the default case) and its \emph{normalization} $\nu^{N}$, where $c=1/\nu(\mathfrak{m})$, $\nu(\mathfrak{m})$ being the minimum of the values $\nu(\varphi)$ for $\varphi\in \mathfrak{m}\setminus \{0\}$.

Let us restrict ourselves now to Noetherian regular local rings $(R,\mathfrak{m})$ of dimension 2. In this situation, plane valuations play a central role. A \emph{plane valuation} is a valuation of a field $K$ which is the fraction field of such an $R$, and centered at $R$. As mentioned above, Zariski gave an algebraic classification for plane valuations based on the values of the invariants involved in (\ref{eqn:Zinequalities}) may take. In this work, we are mainly interested in two types of plane valuations: divisorial and irrational valuations. A plane valuation $\nu$ is called \emph{divisorial} if, according to the general definition, satisfies the equalities
\[
\mathrm{rk} (\nu) = \mathrm{r.rk} (\nu) = \mathrm{tr.deg} (\nu) =  1;
\]
on the other hand, if $\nu$ is such that $\mathrm{rk} (\nu) = 1$, $\mathrm{r.rk} (\nu) = 2$ and $\mathrm{tr.deg} (\nu) =  0$, then it is called \emph{irrational}.
Again Zariski found a fruitful geometric viewpoint in dealing with plane valuations:

\begin{teo} \label{b}  There is a one-to-one correspondence between the set of
plane valuations of $K$ centered at $R$ (up to equivalence) and the set of simple
sequences of point blowing-ups of the scheme {\rm Spec} $R$.
\end{teo}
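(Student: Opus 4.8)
The plan is to write down the correspondence explicitly and verify that the two obvious maps between the two sets are mutually inverse. Since everything will depend only on the valuation ring $R_\nu$ (and two divisorial valuations with the same valuation ring differ by the constant $c$ and are declared equivalent), the constructions descend to equivalence classes automatically, so I work throughout with $R_\nu$.

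\emph{From a valuation to a sequence.} Given a plane valuation $\nu$ of $K$ centered at $R$, put $X_0=\mathrm{Spec}\,R$ and $p_0=\mathfrak{m}$, the center of $\nu$ on $X_0$. Assume a closed point $p_i$ has been constructed on a scheme $X_i$ obtained from $X_0$ by $i$ point blowing-ups, with $R_\nu$ dominating $\mathcal{O}_{X_i,p_i}$. Let $\sigma_i\colon X_{i+1}\to X_i$ be the blowing-up at $p_i$. By the valuative criterion of properness applied to the proper morphism $X_{i+1}\to X_0$ and the valuation ring $R_\nu$, there is a unique point of $X_{i+1}$ dominated by $R_\nu$ (existence from properness, uniqueness from separatedness); since $\sigma_i$ is local on the relevant local rings, this point lies over $p_i$, hence on the exceptional divisor $E_{i+1}=\sigma_i^{-1}(p_i)$, so the sequence produced is simple. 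If this center is not a closed point it must be the generic point of $E_{i+1}$ (as $\nu$ is centered at $R$, its center lies over $\mathfrak{m}$, i.e.\ in the exceptional locus), and since a discrete valuation ring is maximal for domination we get $R_\nu=\mathcal{O}_{X_{i+1},E_{i+1}}$ and $\nu$ is divisorial; then the sequence stops and is the finite simple sequence $p_0,\dots,p_i$. Otherwise we continue, obtaining a (finite or infinite) simple sequence of point blowing-ups, finite precisely when $\nu$ is divisorial.

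\emph{From a sequence to a valuation.} Given a simple sequence $\cdots\to X_{i+1}\xrightarrow{\sigma_i}X_i\to\cdots\to X_1\to X_0=\mathrm{Spec}\,R$ with closed centers $p_i$, set $\mathcal{O}_i:=\mathcal{O}_{X_i,p_i}$, a two-dimensional regular local ring with fraction field $K$, the inclusions $\mathcal{O}_i\hookrightarrow\mathcal{O}_{i+1}$ being local. If the sequence is finite of length $N$, attach to it the divisorial valuation $\mathrm{ord}_{E_N}$, $E_N\subset X_N$ being the exceptional divisor of $\sigma_{N-1}$. If it is infinite, set $V:=\bigcup_{i\ge 0}\mathcal{O}_i$; as a directed union of local rings with local transition maps, $V$ is a local domain with fraction field $K$, maximal ideal $\bigcup_i\mathfrak{m}_i$, dominating $R$. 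The assignment is then $\nu=$ the valuation associated with this valuation ring, provided one knows that $V$ is indeed a valuation ring.

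\emph{Mutual inverseness and the main obstacle.} The two assignments are inverse to each other. If $\nu$ has center sequence $(p_i)$, then $R_\nu$ dominates every $\mathcal{O}_i$, hence contains and dominates $V$ (respectively the DVR $\mathcal{O}_{X_N,E_N}$); as both are valuation rings (respectively a valuation ring dominating a DVR), equality follows and $\nu$ is recovered. Conversely, starting from a simple sequence, the center of the constructed valuation on each $X_i$ is $p_i$ by uniqueness of centers, so the sequence is recovered. It remains to prove that $V=\bigcup_i\mathcal{O}_i$ is a valuation ring of $K$, which is the classical core of the theorem (Zariski; see also \cite{ja20,ja23,fj}) and the step I expect to be the main obstacle. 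Since $V$ is local with fraction field $K$, it suffices to show that for every nonzero $f=g/h$ with $g,h\in R$ one has $f\in\mathcal{O}_i$ or $f^{-1}\in\mathcal{O}_i$ for some $i$. I would track the strict transforms of the curves $V(g)$ and $V(h)$ along the chain: their multiplicities at $p_i$ form non-increasing sequences of non-negative integers, and because a plane curve germ has finite $\delta$-invariant and $V(g),V(h)$ have finite intersection multiplicity, after finitely many blowing-ups along the chain the total transforms of $V(g)$ and $V(h)$ become comparable at $p_i$ — one divides the other in $\mathcal{O}_i$ — using that $p_i$ sits on the exceptional locus of a sequence of blowing-ups of a regular surface, so the finitely many exceptional components together with the relevant strict transform through $p_i$ are under control. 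At that stage $f$ or $f^{-1}$ lies in $\mathcal{O}_i\subset V$. Equivalently, one may show directly that $V$ is maximal among local subrings of $K$ for domination: a local ring properly dominating $V$ would dominate every $\mathcal{O}_i$, hence have $p_i$ as its center on $X_i$ for all $i$, and the separation argument above would then exhibit one of its elements inside some $\mathcal{O}_i\subset V$, contradicting properness of the domination. This yields the bijection.
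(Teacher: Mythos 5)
The paper does not prove this theorem; it is presented, under Zariski's name, as classical background (relevant references such as \cite{ja20}, \cite{ja23}, \cite{fj} appear elsewhere in the text), so there is no proof in the paper against which to compare yours.

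On its own merits, the architecture of your proposal is the standard one and is sound: the valuative criterion for proper and separated morphisms yields the unique simple center sequence (finite exactly in the divisorial case), a finite sequence gives $\mathrm{ord}_{E_N}$, an infinite one gives the local domain $V=\bigcup_i\mathcal{O}_i$, and inverseness follows by maximality of valuation rings for domination. You rightly single out as the real content the fact that $V$ is a valuation ring, but the sketch you give for it has a concrete gap. Finite intersection multiplicity and finite $\delta$-invariant do show that after finitely many blow-ups the strict transforms of $V(g)$ and $V(h)$ drop off the center (modulo the case where the simple sequence follows a branch of one of them, i.e.\ the curve-valuation case, which the theorem also covers and which needs its own, different, treatment). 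At that stage $\pi^*g$ and $\pi^*h$ are units times monomials in the at most two exceptional coordinates at $p_i$. The inference that one must then divide the other in $\mathcal{O}_i$ does not follow: when two exceptional components pass through $p_i$ the two exponent vectors can be incomparable (one coordinate larger, the other smaller), and in the irrational case the satellite blow-ups continue indefinitely. That the exponent vectors eventually become comparable along the given sequence of quadratic transforms is a genuine extra combinatorial lemma --- the Euclidean/continued-fraction dynamics of quadratic transforms acting on the monomial lattice --- and it is precisely what hides behind the phrase ``are under control.'' The fallback argument via maximality for domination is, as you yourself observe, circular: it re-invokes ``the separation argument above,'' i.e.\ exactly the step that is missing. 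None of this is fatal (the fact is true and classical), but as written the proposal does not close the crucial step.
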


Theorem \ref{b} means that we may associate a valuation $\nu$ with a sequence
\begin{equation}
\label{uno} \pi: \cdots \longrightarrow X_{N+1}
\stackrel{\pi_{N+1}}{\longrightarrow} X_{N} \longrightarrow \cdots
\longrightarrow X_{1} \stackrel{\pi_{1}}{\longrightarrow} X_{0}=X=
{\rm Spec} \;R,
\end{equation}
where $\pi_1$ is the blowing-up of $X_0$ at $p_1= \mathfrak{m}$ and, for $i\geq 1$, $\pi_{i+1}$  the blowing-up of $X_i$ at the unique closed point $p_{i+1}$ of the exceptional divisor $E_i$, defined by $\pi_i$, for which $\nu$ is centered at the local ring $R_i:=\mathcal{O}_{X_i,p_{i+1}}$.

The sequence (\ref{uno}) of blowing-ups needs not be finite and defines a sequence (or confi\-guration) of infinitely near points which will be denoted by $\mathcal{C}_{\nu}:=\{p_i\}_{i\geq 1}$. A point $p_i$ is said to be {\it proximate} to $p_j$, denoted by $p_i \rightarrow p_j$, whenever $i>j$ and $p_i$ belongs either to $E_{j}$ or to the strict transform of $E_{j}$ on $X_{i-1}$. A  point $p_i$ is called {\it satellite} if there exists $j < i-1$ satisfying $p_i \rightarrow p_j$; otherwise, it is called {\it free}. Divisors obtained by blowing-up free (respectively, satellite) points are also called free (respectively, satellite).

The {\it dual graph} of a valuation $\nu$ as above is a labeled tree (with infinitely many vertices if ${\mathcal C}_{\nu}$ is infinite) where each vertex represents an exceptional divisor  appearing in the sequence of blowing-ups (\ref{uno}) and two vertices are joined whenever their corresponding divisors meet. Each vertex is labeled with the number of blowing-ups needed to create the corresponding divisor. Equivalently, some authors label each vertex with the self-intersection of the divisor. When the valuation $\nu$ is divisorial, we add an arrow to the vertex that represents the defining divisor of $\nu$.

\subsection{Hamburger-Noether expansions}\label{s:hne}
Puiseux series are not suitable for the treatment of curve singularities in positive characteristic. This treatment can be addressed by using Hamburger-Noether expansions, which work with zero and positive characteristic (see 
\cite{campillo}).

In this paper we are interested in plane valuations. We start by briefly explaining the concept of Hamburger-Noether expansion of a plane valuation $\nu$, from which we will read off the invariants of the valuation. Starting from a regular system of parameters $(u,v)$ for $R$, Hamburger-Noether expansions allow us to classify plane valuations according to the expansion's shape and explicitly determine a regular system of parameters for the rings $R_1,R_2,\ldots, R_N, \ldots$ given by the blow-ups sequence $\pi$ (\ref{uno}). Next we describe this concept;  an alternative version for divisorial valuations can be found in
\cite{ja15}.

To begin with, as mentioned, choose a regular system
of parameters $(u,v)$ for the ring $R$, and assume that $\nu(u) \leq \nu(v)$.
As a first step, we can choose an element $a_{01} \in k$ such that
\[
(u_1 := u, v_1 := (v/u) - a_{01} )
\]
is a regular system of parameters for the ring $R_1$, and therefore $v=a_{01}u+uv_1$.

In the second step there are two possibilities: either $\nu(v_1) \geq \nu(u)$, and we can then choose an element $a_{02} \in k$ such that
\[
(u_2=u, v_2=(v_1/u) - a_{02})
\]
is a regular system of parameters for the ring $R_2$ in such a way that
$$
v=a_{01}u+u(a_{02}u + uv_2)=a_{01}u+a_{02}u^2 + u^2v_2,
$$
or $\nu(v_1)<\nu(u)$, and we stop the algorithm: this is the case $\nu(v)=\nu(u)$.

We keep doing the same
procedure until we obtain
\[
v=a_{01}u+a_{02}u^{2}+ \cdots +
a_{0h}u^{h}+u^{h}v_{h},
\]
where either $\nu(u) > \nu (v_h)$ or
$\nu(v_h)=0$, or
\[
v=a_{01}u+a_{02}u^{2}+ \cdots + a_{0h}u^{h}+
\cdots,
\]
with infinitely many steps. In the last two cases we are done, and we get the Hamburger-Noether expansion for $\nu$. Moreover, $R_{\nu} =
R_h$ when $\nu(v_h)=0$. Otherwise, set $w_1 := v_h$ so that $(w_1,u)$ is a regular system of parameters for $R_h$ with $\nu (w_1)<\nu(u)$, and we start with the process again.

This procedure can continue indefinitely or we can
obtain a last equality. In any case we have attached to $\nu$ a set of
expressions of the form given in Figure \ref{hne}, which is called the Hamburger-Noether expansion of the valuation $\nu$ in the regular system of parameters $(u,v)$ of the ring $R$.

\begin{figure}[h]
\[
\begin{array}{lccl}
&w_{-1}=v & = & a_{01}u+a_{02}u^{2}+ \cdots + a_{0h_{0}}u^{h_{0}}+u^{h_{0}}w_{1} \\
&w_0=u & = & w^{h_{1}}_{1}w_{2} \\
&\vdots & \nonumber & \vdots \\
&w_{s_{1}- 2} & = & w^{h_{s_{1}-1}}_{s_{1}-1}w_{s_{1}}\\
&w_{s_{1}-1} & = & a_{s_{1}k_{1}}w^{k_{1}}_{s_{1}} + \cdots
 +a_{s_{1}h_{s_{1}}}w^{h_{s_{1}}}_{s_{1}}+w^{h_{s_{1}}}_{s_{1}}w_{s_{1}+1}
 \\
& \vdots & \nonumber & \vdots \\
&w_{s_{g}- 1} & = & a_{s_{g}k_{g}}w^{k_{g}}_{s_{g}} + \cdots
+a_{s_{g}h_{s_{g}}}w^{h_{s_{g}}}_{s_{g}}+w^{h_{s_{g}}}_{s_{g}}w_{s_{g}+1} \\
&\vdots & \nonumber & \vdots \\
&w_{i- 1} & = & w^{h_{i}}_{i}w_{i+1} \\
&\vdots & \nonumber & \vdots \\
&(w_{z-1} & = & w^{\infty}_{z}).
\end{array}
\]
\caption{Hamburger-Noether expansion of a plane valuation}
\label{hne}
\end{figure}

The nonnegative integers $s_0 = 0, s_1, \ldots , s_g$ correspond to rows with
some nonzero elements $a_{s_{j}l} \in k$: these rows are called \emph{free}, and are given by
free blowing-up points. Notice that $g \in \mathbb{N}\cup
\{ \infty \} $ and $k_j = \min \{n \in \mathbb{N} \, | \, a_{s_{j}n} \not =
0 \} $.

Plane valuations can be classified in five types according to the shape of their Hamburger-Noether expansions \cite[Section 1.4]{d-g-n}.
Two of the classes are the divisorial and irrational valuations:\\

{\it $\diamond$ Divisorial valuations} have a finite sequence (\ref{uno}), and therefore the associated Ham\-burger-Noether expansion is finite; the last row of the expansion has the form
\begin{equation}
\label{DVI}
 w_{s_{g}- 1}  = a_{s_{g}k_{g}}w^{k_{g}}_{s_{g}}+
\cdots
+a_{s_{g}h_{s_{g}}}w^{h_{s_{g}}}_{s_{g}}+w^{h_{s_{g}}}_{s_{g}}w_{s_{g}+1},
\end{equation}
with $ g< \infty $, $ h_{s_{g}}  < \infty $, $w_{s_{g}+1} \in
R_{\nu}$ and $\nu(w_{s_{g}+1}) =0$.

\begin{figure}[h]

\begin{center}
\setlength{\unitlength}{0.5cm}%
\begin{Picture}(0,0)(20,8)
\thicklines


\xLINE(0,6)(1,6)
\Put(0,6){\circle*{0.3}}
\Put(1,6){\circle*{0.3}}
\put(1,6){$\;\;\ldots\;\;$}
\xLINE(3,6)(4,6)
\Put(3,6){\circle*{0.3}}
\Put(4,6){\circle*{0.3}}
\xLINE(4,6)(4,5)
\Put(4,5){\circle*{0.3}}
\Put(3.9,4){$\vdots$}
\xLINE(4,3.5)(4,2.5)
\Put(4,3.5){\circle*{0.3}}
\Put(4,2.5){\circle*{0.3}}
\Put(3.8,1){$\Gamma_1$}

\put(4.3,2.2){$\ell_1$}

\Put(3.7,6.3){\footnotesize $st_1$}
\Put(-0.3,6.3){\footnotesize $\mathbf{1}$}



\xLINE(4,6)(5,6)
\Put(4,6){\circle*{0.3}}
\Put(5,6){\circle*{0.3}}
\Put(5,6){$\;\;\ldots\;\;$}
\xLINE(7,6)(8,6)
\Put(7,6){\circle*{0.3}}
\Put(8,6){\circle*{0.3}}
\xLINE(8,6)(8,5)
\Put(8,5){\circle*{0.3}}
\Put(7.9,4){$\vdots$}
\xLINE(8,3.5)(8,2.5)
\Put(8,3.5){\circle*{0.3}}
\Put(8,2.5){\circle*{0.3}}
\Put(7.8,1){$\Gamma_i$}

\put(8.3,2.2){$\ell_i$}

\Put(7.7,6.3){\footnotesize $st_i$}

\Put(5,1){$\;\;\cdots\;\;$}


\xLINE(8,6)(9,6)
\Put(8,6){\circle*{0.3}}
\Put(9,6){\circle*{0.3}}
\put(9,6){$\;\;\ldots\;\;$}
\xLINE(11,6)(12,6)
\Put(11,6){\circle*{0.3}}
\Put(12,6){\circle*{0.3}}
\xLINE(12,6)(12,5)
\Put(12,5){\circle*{0.3}}
\Put(11.9,4){$\vdots$}
\xLINE(12,3.5)(12,2.5)
\Put(12,3.5){\circle*{0.3}}
\Put(12,2.5){\circle*{0.3}}
\Put(11.8,1){$\Gamma_g$}

\put(12.3,2.2){$\ell_g$}

\Put(11.7,6.3){\footnotesize $st_g$}

\Put(9,1){$\;\;\cdots\;\;$}


\xLINE(12,6)(13,6)
\Put(13,6){\circle*{0.3}}
\Put(13,6){$\;\;\ldots\;\;$}
\Put(15,6){\circle*{0.3}}
\xLINE(15,6)(16,6)
\Put(16,6){\circle*{0.3}}

\xVECTOR(16,6)(17,7)

\Put(12.7,5.5){$\underbrace{\;\;\;\;\;\;\;\;\;\;\;\;\;\;\;}$}
\Put(13.6,4.2){{\footnotesize Tail}}

\end{Picture}
\end{center}
  \caption{Dual graph of a divisorial valuation}
  \label{fig2}
\end{figure}
In Figure \ref{fig2} we have depicted the dual graph  associated with the configuration ${\mathcal C}_{\nu}$, when $\nu$ is a divisorial valuation (without the above mentioned labels). Here, we add some more labels: the vertex labeled with $\mathbf{1}$ corresponds to the exceptional divisor of the first blowing-up. Vertices different from $\mathbf{1}$ which are adjacent to a unique vertex are called \emph{dead ends} (labeled as $\ell_1, \ell_2 \ldots, \ell_g$ in Figure \ref{fig2}), and those adjacent to three vertices are called \emph{star vertices}. We have labeled the star vertices with $st_1, st_2, \ldots, st_g$ (where the index refers to the order of appearance in the sequence of blowing-ups).
Consider also the following ordering on the set of vertices: $\alpha \preccurlyeq \beta$ iff the path in the dual graph joining $\mathbf{1}$ and $\beta$ goes through $\alpha$.
For each $i=1, 2, \ldots,g$, we denote by $\Gamma_i$  the subgraph given by the vertices $\alpha$ such that $st_{i-1} \preccurlyeq \alpha \preccurlyeq \ell_i$  (where $st_0:=\mathbf{1}$) and the edges joining them. The vertices of each subgraph $\Gamma_i$ corresponding to free divisors are some of the first (consecutive) ones and $\ell_i$ (we call them the \emph{free part} of $\Gamma_i$); moreover, they correspond with the coefficients $a_{s_{i-1} \beta}$ of the $i$th free row of the Hamburger-Noether expansion. The \emph{free rows} are the first one, those rows whose left-hand side term is $\omega_{s_j-1}$ for $j=1, 2, \ldots,g-1$, and the last row (\ref{DVI}) whenever there exists a coefficient $a_{s_g k_g}$.

If the divisor defining $\nu$ is free, then there is a finite sequence of vertices corresponding to free divisors which appear after $st_g$ (the \emph{tail}, in Figure \ref{fig2}). Otherwise this tail does not appear (this case corresponds to the fact that (\ref{DVI}) is not a free row).

Setting $t:=w_{s_g}$ and $z:=w_{s_g+1}$, by backward substitution in the Hamburger-Noether expansion of $\nu$, we obtain parametric equations $u=u(t,z), v=v(t,z)$ in $k[\![t,z]\!]$ such that, if $h\in R$, then $\nu(h)={\rm ord}_t\;h(u(t,z),v(t,z))$ \cite[Section 3]{galindo}. \emph{Notice that the series $u(t,z)$ and $v(t,z)$ have a finite number of non-zero terms}.
\\


{\it $\diamond$ Irrational valuations} have a Hamburger-Noether expansion whose last part consists of a
free row as (\ref{DVI}), with $\nu(w_{s_{g}+1}) \not =0$, followed by infinitely many rows of the form
\begin{equation}\label{prr}
w_{i- 1} = w^{h_{i}}_{i}w_{i+1} \ \ \ \mbox{with } i > s_g.
\end{equation}
This means that the the configuration ${\mathcal C}_{\nu}=\{p_i\}_{i=1}^{\infty}$ is infinite and satisfies the following condition: there exists an index $i_0$ such that, for all $j\geq i_0$, all the points $p_j$ are satellite and they are not proximate to the same point. The rows (\ref{prr}) of the Hamburger-Noether expansion describe the last infinitely many satellite points. The dual graph of ${\mathcal C}_{\nu}$ (with infinitely many vertices) is shown in Figure \ref{fig3} (see also \cite[Section 9]{ja20}).

Let $k\langle t \rangle$ be the ring of formal series $\sum_{r\in \mathbb{R}} a_r t^r, a_r\in k$, such that the set $\{r\in \mathbb{R}\mid a_r\neq 0\}$ is well-ordered. Consider the non-rational number defined by the infinite continued fraction
$$\gamma:=h_{s_g+1}+\frac{1}{h_{s_g+2}+\cdots}.$$
If we write $w_{s_g+1}=t$ and $w_{s_g}=t^\gamma$ in the Hamburger-Noether expansion of $\nu$ and we perform backward substitution, we obtain parametric equations $u=u(t), v=v(t)\in k\langle t \rangle$ such that $\nu(h)={\rm ord}_t\;h(u(t),v(t))$ for all $h\in R$ \cite[Section 6]{galindo}. \emph{Notice that the series $u(t)$ and $v(t)$ have finitely many non-zero terms}.

\begin{figure}[h]

\begin{center}
\setlength{\unitlength}{0.5cm}%
\begin{Picture}(0,0)(20,8)
\thicklines


\xLINE(0,6)(1,6)
\Put(0,6){\circle*{0.3}}
\Put(1,6){\circle*{0.3}}
\put(1,6){$\;\;\ldots\;\;$}
\xLINE(3,6)(4,6)
\Put(3,6){\circle*{0.3}}
\Put(4,6){\circle*{0.3}}
\xLINE(4,6)(4,5)
\Put(4,5){\circle*{0.3}}
\Put(3.9,4){$\vdots$}
\xLINE(4,3.5)(4,2.5)
\Put(4,3.5){\circle*{0.3}}
\Put(4,2.5){\circle*{0.3}}
\Put(3.8,1){$\Gamma_1$}

\put(4.3,2.2){$\ell_1$}

\Put(3.7,6.3){\footnotesize $st_1$}
\Put(-0.3,6.3){\footnotesize $\mathbf{1}$}



\xLINE(4,6)(5,6)
\Put(4,6){\circle*{0.3}}
\Put(5,6){\circle*{0.3}}
\Put(5,6){$\;\;\ldots\;\;$}
\xLINE(7,6)(8,6)
\Put(7,6){\circle*{0.3}}
\Put(8,6){\circle*{0.3}}
\xLINE(8,6)(8,5)
\Put(8,5){\circle*{0.3}}
\Put(7.9,4){$\vdots$}
\xLINE(8,3.5)(8,2.5)
\Put(8,3.5){\circle*{0.3}}
\Put(8,2.5){\circle*{0.3}}
\Put(7.8,1){$\Gamma_i$}

\put(8.3,2.2){$\ell_i$}

\Put(7.7,6.3){\footnotesize $st_i$}

\Put(5,1){$\;\;\cdots\;\;$}


\xLINE(8,6)(9,6)
\Put(8,6){\circle*{0.3}}
\Put(9,6){\circle*{0.3}}
\put(9,6){$\;\;\ldots\;\;$}
\xLINE(11,6)(12,6)
\Put(11,6){\circle*{0.3}}
\Put(12,6){\circle*{0.3}}
\xLINE(12,6)(12,5)
\Put(12,5){\circle*{0.3}}
\Put(11.9,4){$\vdots$}
\xLINE(12,3.5)(12,2.5)
\Put(12,3.5){\circle*{0.3}}
\Put(12,2.5){\circle*{0.3}}
\Put(11.8,1){$\Gamma_g$}

\put(12.3,2.2){$\ell_g$}

\Put(11.7,6.3){\footnotesize $st_g$}

\Put(9,1){$\;\;\cdots\;\;$}


\xLINE(12,6)(13,6)
\Put(13,6){\circle*{0.3}}
\Put(13,6){$\;\;\ldots\;\;\ldots$}
\Put(15.9,5){$\vdots$}
\Put(15.9,4){$\vdots$}
\Put(16,3.5){\circle*{0.3}}
\xLINE(16,3.5)(16,2.5)
\Put(16,2.5){\circle*{0.3}}

\xVECTOR(16.5,4)(16.5,5.5)

\xVECTOR(13.7,6.5)(15.2,6.5)

\Put(16.8,4.8){\footnotesize Infinitely many}
\Put(16.8,4.2){\footnotesize vertices}

\put(16.3,2.2){$\ell_{g+1}$}

\end{Picture}
\end{center}
  \caption{Dual graph of an irrational valuation}
  \label{fig3}
\end{figure}

A third type of plane valuations which will be a useful tool for our purposes are the so-called \emph{curve valuations}: these plane valuations have Hamburger-Noether expansion with the shape showed in Figure \ref{hne}, but with a last row of the form
\begin{equation}\label{uiui}
w_{s_{g}- 1}  = a_{s_{g}k_{g}}w^{k_{g}}_{s_{g}}+ \cdots.
\end{equation}
Notice that the values $a_{s_{g}i}$, for $i > k_g$, may vanish, but $a_{s_{g}k_g} \neq 0$. The configuration ${\mathcal C}_{\nu}$ is also infinite in this case: there exists an index $i_0$ such that $p_j$ is free for all $j\geq i_0$. The coefficients in the row given in (\ref{uiui}) of the Hamburger-Noether expansion determine this last infinite sequence of free points. In Figure \ref{fig4}, we have depicted the dual graph of a curve valuation.

\begin{figure}[h]

\begin{center}
\setlength{\unitlength}{0.5cm}%
\begin{Picture}(0,0)(20,8)
\thicklines


\xLINE(0,6)(1,6)
\Put(0,6){\circle*{0.3}}
\Put(1,6){\circle*{0.3}}
\put(1,6){$\;\;\ldots\;\;$}
\xLINE(3,6)(4,6)
\Put(3,6){\circle*{0.3}}
\Put(4,6){\circle*{0.3}}
\xLINE(4,6)(4,5)
\Put(4,5){\circle*{0.3}}
\Put(3.9,4){$\vdots$}
\xLINE(4,3.5)(4,2.5)
\Put(4,3.5){\circle*{0.3}}
\Put(4,2.5){\circle*{0.3}}
\Put(3.8,1){$\Gamma_1$}

\put(4.3,2.2){$\ell_1$}

\Put(3.7,6.3){\footnotesize $st_1$}
\Put(-0.3,6.3){\footnotesize $\mathbf{1}$}



\xLINE(4,6)(5,6)
\Put(4,6){\circle*{0.3}}
\Put(5,6){\circle*{0.3}}
\Put(5,6){$\;\;\ldots\;\;$}
\xLINE(7,6)(8,6)
\Put(7,6){\circle*{0.3}}
\Put(8,6){\circle*{0.3}}
\xLINE(8,6)(8,5)
\Put(8,5){\circle*{0.3}}
\Put(7.9,4){$\vdots$}
\xLINE(8,3.5)(8,2.5)
\Put(8,3.5){\circle*{0.3}}
\Put(8,2.5){\circle*{0.3}}
\Put(7.8,1){$\Gamma_i$}

\put(8.3,2.2){$\ell_i$}

\Put(7.7,6.3){\footnotesize $st_i$}

\Put(5,1){$\;\;\cdots\;\;$}


\xLINE(8,6)(9,6)
\Put(8,6){\circle*{0.3}}
\Put(9,6){\circle*{0.3}}
\put(9,6){$\;\;\ldots\;\;$}
\xLINE(11,6)(12,6)
\Put(11,6){\circle*{0.3}}
\Put(12,6){\circle*{0.3}}
\xLINE(12,6)(12,5)
\Put(12,5){\circle*{0.3}}
\Put(11.9,4){$\vdots$}
\xLINE(12,3.5)(12,2.5)
\Put(12,3.5){\circle*{0.3}}
\Put(12,2.5){\circle*{0.3}}
\Put(11.8,1){$\Gamma_g$}

\put(12.3,2.2){$\ell_g$}

\Put(11.7,6.3){\footnotesize $st_g$}

\Put(9,1){$\;\;\cdots\;\;$}


\xLINE(12,6)(13,6)
\Put(13,6){\circle*{0.3}}
\Put(13,6){$\;\;\ldots\;\;\ldots$}

\xVECTOR(13.5,6.4)(15.5,6.4)

\Put(13.2,5){\footnotesize Infinitely many}
\Put(13.2,4.4){\footnotesize free vertices}

\end{Picture}
\end{center}
  \caption{Dual graph of a curve valuation}
  \label{fig4}
\end{figure}

\medskip

\medskip

As mentioned, Hamburger-Noether expansions were first introduced for irreducible germs of plane curves. Indeed, the Hamburger-Noether expansion of a germ $C_f$ defined by $f(u,v)=0$ has an expression as in Figure \ref{hneG}, where
$\bar{u} = u + (f) \in R/(f)$ and $\bar{v} = v + (f) \in R/(f)$ (see \cite{campillo}).
\begin{figure}[h]
\[
\begin{array}{lccl}
&\bar{v} & = & a_{01}\bar{u}+a_{02}\bar{u}^{2}+ \cdots + a_{0h_{0}}\bar{u}^{h_{0}}+\bar{u}^{h_{0}}\bar{w}_{1} \\
&\bar{u} & = & \bar{w}^{h_{1}}_{1}\bar{w}_{2} \\
&\vdots & \nonumber & \vdots \\
&\bar{w}_{s_{1}- 2} & = & \bar{w}^{h_{s_{1}-1}}_{s_{1}-1}\bar{w}_{s_{1}}\\
&\bar{w}_{s_{1}-1} & = & a_{s_{1}k_{1}}\bar{w}^{k_{1}}_{s_{1}} + \cdots
 +a_{s_{1}h_{s_{1}}}\bar{w}^{h_{s_{1}}}_{s_{1}}+\bar{w}^{h_{s_{1}}}_{s_{1}}w_{s_{1}+1}
 \\
& \vdots & \nonumber & \vdots \\
&\bar{w}_{s_{g}- 1} & = & a_{s_{g}k_{g}}\bar{w}^{k_{g}}_{s_{g}} + \cdots.
\end{array}
\]
\caption{Hamburger-Noether expansion of a germ}
\label{hneG}
\end{figure}

We conclude by adding that, when $\nu$ is divisorial, choosing suitable coordinates and germs: $q_0$ given by $u=0$ and $q_i$, $1 \leq i \leq g+1$, whose Hamburger-Noether expansion is as in Figure \ref{hneG} but with last row $$
\bar{w}_{s_{i-1}- 1} = a_{s_{i-1}k_{i-1}}\bar{w}^{k_{i-1}}_{s_{i-1}} + \cdots
+a_{s_{i-1}h_{s_{i-1}}}\bar{w}^{h_{s_{i-1}}}_{s_{i-1}} + \cdots,
$$
it happens that $\{q_i\}_{i=0}^{g+1}$ is a generating sequence of the valuation $\nu$,  with $q_{g+1}$ a general element of $\nu$ and $\nu(q_i)=\betabarra_{i}$, cf.~\cite{ja20}.

\subsection{Further invariants of plane valuations}\label{formulas}
Plane valuations admit some numerical invariants that help to study them. Since we are only interested in divisorial and irrational valuations, we will briefly recall these invariants for the mentioned valuations. We preserve notations as above.

First we consider the {\it sequence of values}. For each $p_i\in {\mathcal C}_{\nu}$, set $m_i:=\min \{\nu(\varphi)\mid \varphi\in \mathfrak{m}_i\setminus \{0\}\}$, where $\mathfrak{m}_i$ is the maximal ideal of the ring $R_i={\mathcal O}_{X_{i-1},p_i}$. The sequence $\{m_i\}_{i\geq 1}$ is called the {\it sequence of values} of $\nu$, and by construction of the Hamburger-Noether expansion of $\nu$, it can be obtained from the sequence $\{\nu(\omega_i)\}_{i\geq 0}$ by repeating $h_i$ times each value $\nu(\omega_i)$ (with $\omega_0:=u$) \cite[1.5.1]{d-g-n}.

We consider also the {\it Puiseux exponents:} they are real numbers $\beta'_{0},\beta'_{1},\ldots , \beta'_{g+1}$, defined by $\beta'_0 := 1$ and for $0 \leq j \leq g$,
\[
\beta'_{j+1} := h_{s_j} - k_j +1 + \frac{1}{h_{s_j +1} +
\frac{1}{\ddots}},
\]
where the integers $s_j$, $h_{s_j}$, $h_{s_j +1}$ and $k_j$ can be read off from the Hamburger-Noether expansion of $\nu$.
The Puiseux exponents are in fact rational numbers except for $\beta'_{g+1}$ in the case $\nu$ is irrational.

Hence we can write $\beta'_j = p_j/n_j$ with $\gcd(p_j,n_j)=1$, $e_j =\nu (w_{s_j})$ for  $0 \leq j \leq g$ ($w_{s_0}:=w_0=u$), and $r_i=\nu(w_i)$ for $i \geq 0$, and define the {\it
characteristic sequence} $\{\beta_j\}_{j=0}^{g+1}$ of  $\nu$ as
\[
\beta_0:=e_0,\;\;\; \beta_{j+1} := \beta_j + (h_{s_j} - k_j) e_j + r_{s_j +1},
\]
as well as the sequence $\{\bar{\beta}_j\}_{j=0}^{g+1}$ of {\it maximal
contact values} of $\nu$ as
\[
\bar{\beta}_0:=e_0,\;\;\; \bar{\beta}_{j+1} := n_j \bar{\beta}_j + (h_{s_j} - k_j) e_j +
r_{s_j +1}.
\]
It is worth mentioning that the value semigroup $S$ of $\nu$ (associated to $R$) is generated by the set of maximal
contact values of $\nu$.

From the previous formulae for Puiseux exponents and maximal contact values, one can deduce that $n_j=e_{j-1}/e_j$, where $e_j= \gcd(\bar{\beta}_0, \bar{\beta}_1, \ldots, \bar{\beta}_j)$, and
\begin{equation}
\label{Delta}
\beta'_{j+1} = \frac{\bar{\beta}_{j+1}- n_j \bar{\beta}_j}{e_j}+1, \ \ \ \mbox{for} \ \ 0 \leq j \leq g.
\end{equation}

All the previous formulae involving Puiseux exponents and maximal contact values are also valid for \emph{curve valuations}, but in this case $j$ can only take values strictly less than $g$ (i.e., only $\bar{\beta}_0, \bar{\beta}_1, \ldots, \bar{\beta}_g$ and $\beta'_0, \beta'_1,\ldots, \beta'_g$ are defined).

\medskip



The dual graph  of a valuation $\nu$ is an equivalent datum to the structure of its Hamburger-Noether expansion (that is, all the rows except the specific choice of the coefficients $a_{\alpha \beta}$), cf.~\cite{d-g-n}. Both of them  determine, and are determined by, the Puiseux exponents of $\nu$. In fact, for each $i\in \{1, 2, \ldots,g\}$, the continued fraction expression of the rational number $\beta'_{i}$ determines the subgraph $\Gamma_i$ (see Figures \ref{fig2}, \ref{fig3} and \ref{fig4}) and, if $\nu$ is divisorial (respectively, irrational), $\beta'_{g+1}$ determines the \emph{tail} (respectively, the infinite subgraph $\Gamma\setminus \cup_{i=1}^g \Gamma_i$); see \cite{d-g-n} for more details.

Each of the data --- the sequence of values, Puiseux exponents, maximal contact values and dual graph --- can be obtained from any of the others \cite[Theorem 1.11]{d-g-n}.

\subsection{Irrational valuations as limits of divisorial valuations}

Unlike the case of divisorial valuations, in which we distinguish between \emph{non-normalized} ($\nu$) and \emph{normalized} ($\nu^{N}$) equivalent valuations, all the irrational valuations $\nu$ that we will consider in this paper will be assumed to be \emph{normalized} in the sense that $\nu(\mathfrak{m})=1$; in this case, we use $\nu$ or $\nu^N$ interchangeably.

The following result is a straightforward consequence of the proof of Theorem 6.1 in \cite{galindo}:

\begin{pro}\label{limit}
Let $\nu$ be an irrational valuation and, for all $i\geq 1$, let $\nu_i$ be the divisorial valuation associated to the divisor $E_i$ defined by $\pi_i$ in the sequence of blowing-ups (\ref{uno}) corresponding to $\nu$. Then
$$
\nu(f)=\lim_{i \rightarrow \infty} \nu_i^N(f),\ \ \mbox{for~all} \ \  f \in K.
$$
\end{pro}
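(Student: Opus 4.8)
The plan is to read off both sides of the claimed equality from the Hamburger--Noether machinery described in Section \ref{s:hne}. Recall that for the irrational valuation $\nu$ we have parametric equations $u=u(t),v=v(t)\in k\langle t\rangle$ obtained by setting $w_{s_g+1}=t$, $w_{s_g}=t^\gamma$ with $\gamma$ the non-rational infinite continued fraction $h_{s_g+1}+\cfrac{1}{h_{s_g+2}+\cdots}$, and then performing backward substitution; these satisfy $\nu(h)=\mathrm{ord}_t\,h(u(t),v(t))$ for all $h\in R$. Since the divisor $E_i$ (for $i>s_g$) corresponds to the finite continued fraction $\gamma_i:=h_{s_g+1}+\cfrac{1}{\ddots\,+\cfrac{1}{h_i}}$ truncated at the $i$th step, its divisorial valuation $\nu_i$, suitably normalized, is obtained by the \emph{same} backward substitution but with $w_{s_g}=t^{\gamma_i}$ (equivalently, by declaring $\nu_i^N(w_{s_g+1})$ and $\nu_i^N(w_{s_g})$ to be coprime integers in the ratio $1:\gamma_i$, rescaled so that $\nu_i(\mathfrak{m})=1$). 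This is precisely the content of Theorem 6.1 in \cite{galindo}, whose proof exhibits $\nu$ as the limit of the $\nu_i^N$; so the first step is to recall that statement and its proof in the detail needed to extract the pointwise convergence.

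The key computational point is then elementary: the convergents $\gamma_i$ of the infinite continued fraction converge to $\gamma$, indeed monotonically in the usual alternating fashion with $|\gamma-\gamma_i|\to 0$. For a fixed $f\in K$, write $f=h_1/h_2$ with $h_j\in R$; it suffices to treat $f\in R$. Expanding $f(u(t),v(t))$ using the parametrization with parameter exponent $\gamma_i$ in place of $\gamma$, one gets a series whose support is a finite $\mathbb{Z}_{\ge0}$-linear combination of the exponents appearing in $u,v$ — and, crucially, $u(t)$ and $v(t)$ have only \emph{finitely many} non-zero terms (emphasized twice in the excerpt). Hence $\nu_i^N(f)$ is the minimum of a \emph{fixed finite} set of $\mathbb{R}$-linear forms in $1$ and $\gamma_i$ (with non-negative integer coefficients coming from the finitely many monomials of $f$), while $\nu(f)$ is the minimum of the same finite set of linear forms evaluated at $\gamma$. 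Because there are finitely many such forms and each is continuous in the continued-fraction value, and because $\gamma$ is irrational so no two of these forms can tie at $\gamma$ unless they tie identically, for $i$ large enough the minimizing form stabilizes and $\nu_i^N(f)=$ (that form at $\gamma_i$) $\to$ (that form at $\gamma$) $=\nu(f)$. This gives the claimed limit.

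Concretely I would organize it as: (1) recall from \cite[Theorem 6.1]{galindo} the parametrizations for $\nu$ and for each $\nu_i^N$, noting that the only difference is $\gamma$ versus its $i$th convergent $\gamma_i$; (2) reduce to $f\in R$ and write $\nu_i^N(f)=\min_{\mathbf{a}\in \Lambda(f)} L_{\mathbf{a}}(\gamma_i)$ and $\nu(f)=\min_{\mathbf{a}\in\Lambda(f)} L_{\mathbf{a}}(\gamma)$ for an explicit finite index set $\Lambda(f)$ depending on the (finitely many) monomials of $f$ and the (finitely many) terms of $u,v$, where each $L_{\mathbf a}$ is an affine function of the continued-fraction value; (3) observe $\gamma_i\to\gamma$ and invoke continuity of each $L_{\mathbf a}$ and finiteness of $\Lambda(f)$ to conclude $\min_{\mathbf a} L_{\mathbf a}(\gamma_i)\to \min_{\mathbf a}L_{\mathbf a}(\gamma)$. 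Since the statement is explicitly flagged as ``a straightforward consequence of the proof of Theorem 6.1 in \cite{galindo}'', the honest write-up is short: cite that proof for the parametrization-with-convergents description and for the identity $\nu=\lim \nu_i^N$, then add the one-line continuity remark above.

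The main obstacle is bookkeeping rather than mathematics: one must make sure the normalizations match, i.e. that $\nu_i^N(\mathfrak m)=1=\nu(\mathfrak m)$ are used consistently so that the exponent ratio is exactly $1:\gamma_i$ converging to $1:\gamma$, and one must be careful that the ``$\min$ of finitely many linear forms'' description of $\nu_i^N(f)$ is uniform in $i$ — which is exactly why the finiteness of the supports of $u(t),v(t)$ (and hence of $\Lambda(f)$, independent of $i$ for $i$ large) is essential and worth stating explicitly. No step is genuinely deep once Theorem 6.1 of \cite{galindo} is granted, so if space is tight the entire proof can legitimately be replaced by the sentence already in the excerpt together with a pointer to that reference.
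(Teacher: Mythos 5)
Your proposal is essentially the paper's argument: the paper itself offers no proof but simply declares the statement ``a straightforward consequence of the proof of Theorem~6.1 in \cite{galindo}'', which is the Hamburger--Noether parametrization you recall and then unpack.

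One point is worth tightening. You assert that $\nu_i^N(f)$ is the minimum of a fixed finite family of linear forms $L_{\mathbf a}$ evaluated at the convergent $\gamma_i$, and that $\nu(f)$ is the same minimum at $\gamma$. For $\nu$ (irrational $\gamma$) this is correct with no caveat, because the $L_{\mathbf a}$ have rational coefficients, so distinct forms cannot collide at $\gamma$ and no cancellation of leading $t$-terms can occur in $f(u(t),v(t))$. For the truncated valuations $\nu_i$, however, $\gamma_i$ is rational, so two distinct forms may coincide at $\gamma_i$; in that case coefficients in $f(u_i(t,z),v_i(t,z))$ could cancel, and one only has $\nu_i(f)\geq\min_{\mathbf a}L_{\mathbf a}(\gamma_i)$ a priori. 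The fix is exactly the observation you make for a different purpose: there are finitely many pairwise differences $L_{\mathbf a}-L_{\mathbf b}$, each a nonzero affine form with rational coefficients (nonzero because $\gamma$ is irrational and $L_{\mathbf a}(\gamma)\neq L_{\mathbf b}(\gamma)$), so their finitely many roots are avoided by $\gamma_i$ for $i$ large. Hence for large $i$ no cancellation occurs, the equality $\nu_i(f)=\min_{\mathbf a}L_{\mathbf a}(\gamma_i)$ does hold, and continuity (or your stabilization argument) gives the limit. You should state this explicitly rather than leave it folded into the remark about ``the minimizing form stabilizes'', since that remark is not by itself needed for the limit --- continuity of a finite min already suffices --- whereas ruling out cancellation is genuinely needed to identify $\nu_i^N(f)$ with the min of the forms in the first place.

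Two smaller bookkeeping remarks. First, the reduction to $f\in R$ should really continue to a polynomial (or finitely many terms of a power series): for $f\in R$ write $f=a/b$ with $b$ a unit so $\nu(b)=\nu_i(b)=0$, and note that $\mathrm{ord}_t$ of a power series composed with the finite parametrizations is determined by finitely many of its terms, which is what makes the index set $\Lambda(f)$ finite and independent of $i$. Second, the normalization turns the linear forms in $\gamma_i$ into ratios of such forms, with denominator $\nu_i(\mathfrak m)$; since this converges to $\nu(\mathfrak m)>0$, the ratios remain continuous in the relevant range, so the argument is unaffected, but it is worth a sentence so the ``linear form'' description is not taken too literally after normalization.
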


\subsection{Volume and normalized volume of a valuation}
\label{volumen}



Let $\nu$ be a divisorial valuation. Set $\mathcal{C}_\nu := \{p_i\}_{i=1}^{s}$ for the corresponding configuration of infinitely near points. The sequence of values $\{m_i\}_{i=1}^s$  determines $\nu$ because of the equality
\begin{equation}
\label{AAA}
\nu(\psi)=\sum_{i=1}^s m_i\cdot {\rm mult}_{p_i}(\psi), \ \ \mbox{for~every} \ \psi \in R.
\end{equation}

According to
\cite{e-l-s}, the \emph{volume} of $\nu$ is defined as
\[
{\rm vol}(\nu):=\lim_{\alpha \rightarrow \infty} \frac{\mathrm{length} (R/{\mathcal P}_{\alpha})}{\alpha^2/2}.
\]

Write $\alpha=r \sum_{i=1}^s {m_i^2}$ for some $r \in \mathbb{N}$, then $\mathrm{length} (R/{\mathcal P_{\alpha}})=\sum_{i=1}^s \frac{r m_i(r m_i+1)}{2}$ (cf.
\cite[4.7]{casas}), and therefore
\[
{\rm vol}(\nu)=\left(\sum_{i=1}^s m_i^2\right)^{-1}= \frac{1}{\nu(q_{g+1})} =\frac{1}{\betabarra_{g+1}}.
\]

We also define the \emph{normalized volume} of $\nu$, ${\rm vol}^N(\nu)$,  as the volume of the normalized valuation $\nu^N$, that is:
\[
{\rm vol}^N(\nu):={\rm vol}\left(\nu^N \right)=\frac{\betabarra_0^2}{\bar{\beta}_{g+1}}.
\]

Proposition \ref{limit} allows us to define the volume (or normalized volume) of an irrational valuation $\nu$ as
${\rm vol}(\nu)={\rm vol}^N(\nu):=\lim_{i\rightarrow \infty} {\rm vol}^N(\nu_i)$,
where $\{\nu_i\}_{i\geq 1}$ is the sequence of divisorial valuations defined by the exceptional divisors appearing in the sequence (\ref{uno}).

\subsection{Minimal valuations}\label{minimal}
Let $\mathbb{P}^2 := \mathbb{P}^2_k$ be the projective plane over an algebraically closed field $k$ and fix projective coordinates $X, Y, Z$. For the sake of simplicity, assume that $p=p_1=(1:0:0)\in \mathbb{P}^2$ and consider local coordinates $(u,v)$, $u=Y/X$ and $v=Z/X$, around $p$. Let $\nu$ be a divisorial or irrational valuation of the fraction field of the local ring $R:= {\mathcal O}_{{\mathbb P}^2,p}$, centered at $R$.


For each positive integer $d$ we denote
$$\mu_d(\nu):=\max\{\nu(f)\mid f\in k[u,v] \mbox{ and } \deg(f)\leq d\},$$
$$\mu^N_d(\nu):=\mu_d(\nu^N)=\max\{\nu^N(f)\mid f\in k[u,v] \mbox{ and } \deg(f)\leq d\},$$
as well as
$$\hat{\mu}(\nu):=\lim_{d\rightarrow \infty} \frac{\mu_d(\nu)}{d},\;\;\;
\hat{\mu}^N(\nu):=\hat{\mu}\left( \nu^N \right)=\lim_{d\rightarrow \infty} \frac{\mu^N_d(\nu)}{d}.$$

As stated in \cite{d-h-k-r-s}), the following inequality holds:
$$\hat{\mu}(\nu)\geq \sqrt{[{\rm vol}(\nu)]^{-1}};\;\;\; \mbox{or, equivalently, }\hat{\mu}^N(\nu)\geq \sqrt{[{\rm vol}^N(\nu)]^{-1}}\;\;$$
and, accordingly, we present the following definition.
\begin{de}
{\rm
A valuation $\nu$ as above is said to be \emph{minimal} if
$$\hat{\mu}(\nu)=\sqrt{\frac{1}{{\rm vol}(\nu)}}\; ; \ \mbox{or, equivalently, if~} \ \hat{\mu}^N(\nu)=\sqrt{\frac{1}{{\rm vol}^N(\nu)}}.
$$}
\end{de}

\section{The metric spaces of valuations $V_{\delta}$ and continuity of $\hat{\mu}^N$}\label{metric}

From now on, let us fix a  curve (plane) valuation $\delta$ with Hamburger-Noether expansion $H$ (with respect to a fixed system of parameters $(u,v)$) as explained in Subsection \ref{s:hne}. We define $V_{\delta}$ as the set of \emph{divisorial} or \emph{irrational} plane valuations $\nu$ satisfying the following conditions:
\begin{itemize}
\item[$\diamond$] The Hamburger-Noether expansions of $\nu$ and $\delta$ coincide \emph{up to the row} where $w_{s_g-2}$ is the left-hand side of the equality.
\item[$\diamond$] The row with left-hand side equal to $w_{s_g-1}$ has the form given in (\ref{DVI}) and, either all the coefficients $a_{s_g \beta}$ are zero and $k_g=h_{s_g}$ (in this case $\nu$ is divisorial and this is the last row), or this is the last \emph{free} row and all the (finitely many) coefficients $a_{s_g \beta}$
coincide exactly with the corresponding coefficients of the last row of the Hamburger-Noether expansion of $\delta$.
\end{itemize}
In other words, $V_{\delta}$ consists of those divisorial and irrational valuations whose dual graph $\Gamma$ (see Figures \ref{fig2} and \ref{fig3}) contains $g$ subgraphs $\Gamma_1, \Gamma_2, \ldots, \Gamma_g$, the subgraph given by $\cup_{i=1}^g \Gamma_i$ coincides with the one of $\delta$, and the infinitely near points corresponding to vertices in $\cup_{i=1}^g \Gamma_i$ and the \emph{free part} of  $\Gamma\setminus \cup_{i=1}^g \Gamma_i$ also coincide with those of $\delta$.



It is clear that $\nu \in V_\delta$ is determined from both $H$ and the  Puiseux exponent $\beta'_{g+1}$ of $\nu$. Notice that $\nu$ has $g+2$ Puiseux exponents, $\{\beta'_j\}_{j=0}^{g+1}$,
when either $\nu$ is divisorial and the divisor defining $\nu$ is either free or corresponds to the vertex $st_g$ (see Figure \ref{fig2}), or $\nu$ is irrational. Otherwise, $\nu$ has $g+3$ Puiseux exponents, $\{\beta'_j\}_{j=0}^{g+2}$ with $\beta'_{g+2}=1$.

As a consequence, the {\it normalized volume} of a divisorial valuation $\nu$ in $V_\delta$ is either
\begin{equation}\label{tt}
\mathrm{vol}^N(\nu) = \bar{\beta}_0^2/\bar{\beta}_{g+1} \ \ \mbox{or}  \ \ \mathrm{vol}^N(\nu) = \bar{\beta}_0^2/\bar{\beta}_{g+2}
\end{equation}
depending on which of the above two cases we are in.

When considering an irrational valuation $\nu\in V_{\delta}$, let $\{\nu_i\}_{i=1}^\infty$ be the sequence of divisorial valuations associated to the exceptional divisors of the blowing-ups $\pi_i$ appearing in the sequence (\ref{uno}). It is clear that there exists an index $i_0$ such that $\nu_i$ belongs to $V_{\delta}$ for all $i\geq i_0$, and has $g+3$ associated Puiseux exponents; therefore
$${\rm vol}(\nu)={\rm vol}^N(\nu)=\lim_{i\rightarrow \infty} {\rm vol}^N(\nu_i)= \lim_{i\geq i_0} \frac{(\betabarra_0^i)^2}{\betabarra_{g+2}^i}=\lim_{i\geq i_0} \frac{\betabarra_0^i}{e_g^i} \frac{\betabarra_0^i}{\betabarra_{g+1}^i},$$
where the superscript $i$ stands for those values associated with the valuation $\nu_i$, and the equality $\betabarra_{g+2}^i=e_g^i\betabarra_{g+1}$ comes from Equality (\ref{Delta}) and the fact that $\beta_{g+2}'^{i}=1$, because $\nu_i$ is defined by a non-free divisor for all $i\geq i_0$.  Notice that
$$\lim_{i\geq i_0} \frac{\betabarra_0^i}{\betabarra_{g+1}^i}=\frac{1}{\betabarra_{g+1}},$$
where $\betabarra_{g+1}$ is the ($g+2$)-th contact maximal value associated to $\nu$.

Moreover, denoting by  $\betabarra_0^{\delta}$ the first maximal contact value of the curve valuation $\delta$, the quotient $\betabarra_0^i/e_g^i$ equals $\betabarra_0^{\delta}$ for all $i\geq i_0$, and we obtain that
\[
{\rm vol}(\nu)={\rm vol}^N(\nu)=\frac{\betabarra_0^{\delta}}{\betabarra_{g+1}}.
\]


Inverses of the normalized volumes of valuations $\nu$ in $V_{\delta}$ are related with the Puiseux exponents $\beta'_{g+1}$ by means of an affine function, as the following result shows:

\begin{lem}
\label{lemauno}
There exist positive rational numbers $A$ and $B$, depending only on the dual graph of $\delta$, such that
$[{\rm vol}^N(\nu)]^{-1}=A(\beta'_{g+1}-1)+B$ for any valuation $\nu\in V_{\delta}$,
where $\beta'_{g+1}$ is the corresponding Puiseux exponent of $\nu$.
\end{lem}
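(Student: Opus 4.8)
The strategy is to express both $\betabarra_{g+1}$ (or $\betabarra_{g+2}$, depending on the case) and $\beta'_{g+1}$ in terms of data that is fixed throughout $V_\delta$, and to observe that the dependence of the former on the latter is affine. First I would use the recursive formula for the maximal contact values recalled in Subsection \ref{formulas}:
\[
\betabarra_{g+1} = n_g \betabarra_g + (h_{s_g} - k_g) e_g + r_{s_g+1}.
\]
All the quantities $n_g, \betabarra_g, e_g, r_{s_g+1}$, and the integers $h_{s_j}, k_j, s_j$ for $j \le g-1$ are read off from the part of the Hamburger-Noether expansion that, by the very definition of $V_\delta$, is common to every $\nu\in V_\delta$ and to $\delta$; hence they are constants depending only on the dual graph of $\delta$. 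The only ingredient in the last row that varies with $\nu$ is the tail, i.e.\ the continued fraction $h_{s_g+1} + 1/(h_{s_g+2}+\cdots)$, which appears in the formula (\ref{Delta}) governing $\beta'_{g+1}$.

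Concretely, from (\ref{Delta}) with $j = g$ we have
\[
\beta'_{g+1} = \frac{\betabarra_{g+1} - n_g \betabarra_g}{e_g} + 1,
\]
so that $\betabarra_{g+1} = e_g(\beta'_{g+1} - 1) + n_g\betabarra_g$. Dividing by $\betabarra_0^2$ and inverting gives
\[
[{\rm vol}^N(\nu)]^{-1} = \frac{\betabarra_{g+1}}{\betabarra_0^2} = \frac{e_g}{\betabarra_0^2}(\beta'_{g+1}-1) + \frac{n_g\betabarra_g}{\betabarra_0^2},
\]
which is exactly the asserted affine relation with $A = e_g/\betabarra_0^2$ and $B = n_g\betabarra_g/\betabarra_0^2$, both positive rationals depending only on the dual graph of $\delta$. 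I would then handle separately the case in which $\nu$ has $g+3$ Puiseux exponents (so that ${\rm vol}^N(\nu) = \betabarra_0^2/\betabarra_{g+2}$): here one uses $\beta'_{g+2} = 1$ together with (\ref{Delta}) applied at $j = g+1$ to get $\betabarra_{g+2} = e_g \betabarra_{g+1}$ (as already computed in the paragraph preceding the lemma), so that $[{\rm vol}^N(\nu)]^{-1} = \betabarra_{g+2}/\betabarra_0^2 = e_g\betabarra_{g+1}/\betabarra_0^2$, and then substitute the expression for $\betabarra_{g+1}$ above; this only rescales $A$ and $B$ by the constant $e_g$, preserving positivity and rationality. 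The irrational case follows by the limit computation already recorded, which yields ${\rm vol}^N(\nu) = \betabarra_0^\delta/\betabarra_{g+1}$, again affine in $\beta'_{g+1}$ by the same substitution.

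The bookkeeping obstacle — and it is the only real subtlety — is making sure that $\betabarra_g$, $e_g$ and $n_g$ genuinely do not depend on $\nu$. This requires carefully tracing through the definitions in Subsection \ref{s:hne}: the subgraphs $\Gamma_1,\dots,\Gamma_g$ and the free part of $\Gamma\setminus\bigcup\Gamma_i$ are shared with $\delta$ by construction of $V_\delta$, so the sequence of values $\{\nu(\omega_i)\}$, and hence the partial sums defining $\betabarra_g$, the successive gcd's $e_j = \gcd(\betabarra_0,\dots,\betabarra_j)$ for $j\le g$, and the ratios $n_j = e_{j-1}/e_j$ for $j \le g$, are all determined by that common data. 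Once this is checked, there is nothing left to do: the formula is literally an instance of (\ref{Delta}) divided by $\betabarra_0^2$.
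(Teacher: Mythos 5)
Your overall strategy is the same as the paper's: start from formula (\ref{Delta}), solve for $\bar{\beta}_{g+1}$, and divide by $\bar{\beta}_0^2$. But there is a genuine gap in the step you dismiss as mere bookkeeping. You claim that $\bar{\beta}_0$, $e_g$, $n_g$ and $\bar{\beta}_g$ are constants across $V_{\delta}$ because the initial rows of the Hamburger--Noether expansion are shared. That is false: these are invariants of $\nu$ itself, and they change as $\nu$ ranges over $V_{\delta}$. For instance, for $\delta$ the curve valuation of the smooth germ $v=0$, the valuation with $\beta'_1=2$ has $\bar{\beta}_0=1$, $e_0=1$, while the valuation with $\beta'_1=3/2$ has $\bar{\beta}_0=2$, $e_0=2$, $e_1=1$. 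What \emph{is} constant are certain ratios (this is what the paper checks explicitly): $\bar{\beta}_0/e_g=\bar{\beta}_0^{\delta}$, $e_{g-1}/\bar{\beta}_0=e_{g-1}^{\delta}/\bar{\beta}_0^{\delta}$ and $\bar{\beta}_g/\bar{\beta}_0=\bar{\beta}_g^{\delta}/\bar{\beta}_0^{\delta}$.

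This matters precisely in your treatment of the $g+3$ Puiseux exponent case, where you say the coefficients are ``rescaled by the constant $e_g$, preserving positivity and rationality.'' If $A$, $B$ and $e_g$ really were $\nu$-independent and $e_g\neq 1$, you would be asserting that $[\mathrm{vol}^N(\nu)]^{-1}$ equals both $A(\beta'_{g+1}-1)+B$ and $e_gA(\beta'_{g+1}-1)+e_gB$ on $V_\delta$, which cannot both hold with a single affine form, and the lemma would be false as stated. What actually happens is that in the first case $e_g=1$ and $\bar{\beta}_0=\bar{\beta}_0^{\delta}$, so $A=e_g/\bar{\beta}_0^2=1/(\bar{\beta}_0^{\delta})^2$; in the second case $\bar{\beta}_0=e_g\bar{\beta}_0^{\delta}$, so $A=e_g^2/\bar{\beta}_0^2=1/(\bar{\beta}_0^{\delta})^2$ again, and similarly for $B=e_{g-1}^{\delta}\bar{\beta}_g^{\delta}/(\bar{\beta}_0^{\delta})^2$. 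Your argument needs to carry out this normalization to $\delta$-invariants explicitly rather than asserting that the $\nu$-invariants are fixed; once you do that, the two cases (and the irrational limit) collapse to the same formula, which is exactly the content of the paper's equation (\ref{mm}).
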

\begin{proof}
Firstly, assume that $\nu$ is divisorial and is defined by either the divisor associated with the vertex $st_g$ or a free divisor. Equality (\ref{Delta}) implies that
\[
\bar{\beta}_{g+1} = (\beta'_{g+1} -1) e_g + n_g \bar{\beta}_{g},
\]
and by (\ref{tt}) it holds that
$$
\frac{1}{\mathrm{vol}^N(\nu)} = \frac{\bar{\beta}_{g+1}}{(\bar{\beta}_{0})^2} = \frac{1}{(\bar{\beta}_{0})^2} (\beta'_{g+1} -1) e_g + \frac{n_g \bar{\beta}_{g}}{(\bar{\beta}_{0})^2}.
$$
Note that here $e_g=1$, $n_g=e_{g-1}$, $\betabarra_0=\betabarra_0^{\delta}$, $e_{g-1}/\betabarra_0=e^{\delta}_{g-1}/\betabarra^{\delta}_0$ and $\betabarra_g/\betabarra_0=\betabarra^{\delta}_g/\betabarra^{\delta}_0$, $e_{g-1}^{\delta}$ being the greatest common divisor of the first $g$ maximal contact values of $\delta$. Hence
\begin{equation}
\label{mm}
\frac{1}{\mathrm{vol}^N(\nu)} = \frac{1}{(\bar{\beta}^{\delta}_{0})^2} (\beta'_{g+1} -1) + \frac{e^{\delta}_{g-1} \bar{\beta}^{\delta}_{g}}{(\bar{\beta}^{\delta}_{0})^2}.
\end{equation}

When $\nu$ is divisorial but defined by a non-free divisor which is not the one associated to $st_g$, it holds that $\bar{\beta}_{g+2} =  n_{g+1} \bar{\beta}_{g+1}$ and so
\[
\bar{\beta}_{g+2} = e_g^2 (\beta'_{g+1} -1)  + n_{g+1} n_g  \bar{\beta}_{g},
\]
which gives
$$
\frac{1}{\mathrm{vol}^N(\nu)}
= \frac{\bar{\beta}_{g+2}}{(\bar{\beta}_{0})^2} = \left(\frac{e_g}{\bar{\beta}_{0}}\right)^2 (\beta'_{g+1} -1)  + \frac{e_{g-1}}{\bar{\beta}_{0}} \frac{ \bar{\beta}_{g}}{\bar{\beta}_{0}}.
$$
Taking into account that $\betabarra_0/e_g=\betabarra_0^{\delta}$, we obtain Formula (\ref{mm}) in this case too.

If $\nu$ is irrational and $\{\nu_i\}_{i\geq 1}$ is the sequence of divisorial valuations associated with the exceptional divisors of the blowing-ups $\pi_i$ in (\ref{uno}), we have that
$$\left[{\rm vol}^N(\nu)\right]^{-1}=\lim_{i\rightarrow \infty} \left[{\rm vol}(\nu_i)\right]^{-1}\;\;\mbox{ and }\;\; \beta'_{g+1}=\lim_{i\rightarrow \infty} \beta'^i_{g+1}.$$
This extends Formula (\ref{mm}) also to irrational valuations.

\end{proof}

Observe that $1$ is the minimum value for the Puiseux exponent $\beta'_{g+1}$ of a valuation in $V_{\delta}$ (attained for the divisorial valuation defined by the divisor associated with the vertex $st_g$). Therefore, by the proof of Lemma \ref{lemauno}, the set of inverses of normalized volumes, $\left[{\rm vol}^N(\nu)\right]^{-1}$, when $\nu$ varies in $V_{\delta}$, runs over the interval
$$\Delta_{\delta}:=\left[\frac{e^{\delta}_{g-1} \bar{\beta}^{\delta}_{g}}{(\bar{\beta}^{\delta}_{0})^2},+\infty\right).$$

Any valuation in $V_{\delta}$ is determined by the Puiseux exponent $\beta'_{g+1}$. Indeed, the infinitely near points associated with the vertices of the subgraph $\cup_{i=1}^g \Gamma_i$ and the free part of the graph $\Gamma\setminus \cup_{i=1}^g \Gamma_i$ are determined by $\delta$; since $\beta'_{g+1}$ allows us to recover the subgraph $\Gamma\setminus \cup_{i=1}^g \Gamma_i$, the remaining satellite points are determined too. This fact together with Lemma \ref{lemauno} proves the existence of a bijection $\phi: \Delta_{\delta}\rightarrow V_{\delta}$ that assigns, to each $t\in \Delta_{\delta}$, the unique valuation $\nu_t \in V_{\delta}$ such that $\left[{\rm vol}^N(\nu_t)\right]^{-1}=t$ (see \cite{fj}, where the inverse of the normalized volume of a valuation is named skewness). Hence, we can write
$$V_{\delta}=\{\nu_t\}_{t\in \Delta_{\delta}}.$$
Moreover, we endow $V_{\delta}$ with a structure of metric space induced by $\phi$ and the absolute value in $\Delta_{\delta}$.


\begin{pro}\label{proposiciondos}
With the above notation and under the assumptions in Section \ref{minimal}, for any fixed element $ f \in R:= {\mathcal O}_{{\mathbb P}^2,p}$, the map $\eta_f: V_{\delta}\rightarrow  \mathbb{R}$, $\eta_f (\nu_t) = \nu_t^N(f)$  is Lipschitz continuous. As a consequence, the map  $V_{\delta}\rightarrow  \mathbb{R}$ defined by $\nu_t\mapsto \hat{\mu}^N(\nu_t)$ is also Lipschitz continuous.
\end{pro}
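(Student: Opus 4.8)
The plan is to estimate, for a fixed $f\in R$ of degree $d$ (as a polynomial in $u,v$), the difference $\nu_t^N(f)-\nu_s^N(f)$ in terms of $|t-s|$, using the description of valuations in $V_\delta$ via parametric equations. First I would fix the curve valuation $\delta$, its Hamburger-Noether expansion $H$, and recall from Subsection~\ref{s:hne} that for a divisorial $\nu=\nu_t\in V_\delta$ one has parametric equations $u=u(\theta,\zeta)$, $v=v(\theta,\zeta)$ with \emph{finitely many} nonzero terms, such that $\nu_t(f)={\rm ord}_\theta f(u(\theta,\zeta),v(\theta,\zeta))$; similarly for irrational $\nu$ one has $u=u(\theta)$, $v=v(\theta)\in k\langle\theta\rangle$ again with finitely many terms. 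Because all the valuations in $V_\delta$ share the same free rows (they coincide with those of $\delta$ up through the last free row), these parametric series differ only in the last variable: writing $w_{s_g}=\theta^{\gamma}$, $w_{s_g+1}=\theta$ where $\gamma=\gamma(t)$ is the continued fraction encoding $\beta'_{g+1}$, the exponents appearing in $u(\theta),v(\theta)$ are affine-linear functions of $\gamma$, and by Lemma~\ref{lemauno} and Equality~(\ref{Delta}), $\gamma$ in turn is an affine function of $t=[{\rm vol}^N(\nu_t)]^{-1}$. Hence the support of $f(u(\theta),v(\theta))$ consists of finitely many exponents, each a fixed affine function of $t$ with coefficients (and even which monomials of $f$ contribute) bounded in terms of $d$ and the combinatorics of $H$ only.

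Next I would observe that $\nu_t^N(f)$ is the minimum of these finitely many affine functions of $t$ over those that survive cancellation. The key point is that two monomials of $f$ that produce the same $\theta$-exponent for one value of $t$ produce the same exponent for all $t$ (since the exponents are the \emph{same} affine function of $t$ precisely when the underlying monomial data agree), so the set of surviving leading terms is locally constant in $t$; between the finitely many values of $t$ where the identity of the minimal surviving exponent changes, $\eta_f(\nu_t)$ is a single affine function of $t$ with slope bounded by a constant $C_f$ depending only on $\deg f$ and on the data of $\delta$. A pointwise-minimum of finitely many Lipschitz functions with Lipschitz constant $\le C_f$ is itself Lipschitz with the same constant, which gives the first assertion. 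The normalization only rescales by $1/\nu_t(\mathfrak m)$, which is itself a locally affine, bounded, bounded-away-from-zero function of $t$ by the same argument applied to $\mathfrak m$, so $\eta_f$ remains Lipschitz.

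For the second assertion, recall $\hat\mu^N(\nu_t)=\lim_{d\to\infty}\mu_d^N(\nu_t)/d$ and $\mu_d^N(\nu_t)=\max\{\nu_t^N(f):\deg f\le d\}$ is a maximum of finitely many of the maps $\eta_f$ (one may restrict to $f$ with coefficients $0$ or $1$, or argue that $\nu_t^N$ on a degree-$\le d$ polynomial is determined by a bounded-size linear-algebra computation over the valuations of the generators). The crucial uniformity is that the Lipschitz constant $C_f$ above can be bounded by $C\cdot d$ for a constant $C$ independent of $f$ and $t$: each monomial $u^av^b$ with $a+b\le d$ contributes a $\theta$-exponent whose $t$-derivative is $O(d)$, since it is $a$ times the ($t$-derivative of the) exponent of $u(\theta)$ plus $b$ times that of $v(\theta)$. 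Therefore $|\mu_d^N(\nu_t)-\mu_d^N(\nu_s)|\le Cd\,|t-s|$, so $|\mu_d^N(\nu_t)/d-\mu_d^N(\nu_s)/d|\le C|t-s|$ uniformly in $d$; passing to the limit, $|\hat\mu^N(\nu_t)-\hat\mu^N(\nu_s)|\le C|t-s|$.

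The main obstacle I anticipate is the bookkeeping in the first paragraph: making precise that the $\theta$-exponent attached to a monomial $u^av^b$ under the parametrization is a \emph{controlled} affine function of $t$ — uniformly in which valuation $\nu_t\in V_\delta$ we are in, and with the ``break points'' (values of $t$ where the minimizing term changes) being finitely many — requires carefully tracking the backward substitution through the shared free rows and the single varying satellite tail, and handling the divisorial versus irrational cases (and the $g+2$ versus $g+3$ Puiseux-exponent subcases) uniformly; Proposition~\ref{limit} lets the irrational case be obtained as a limit of the divisorial estimates, so the real work is the divisorial bound with constants independent of $\beta'_{g+1}$.
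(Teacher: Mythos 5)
Your approach --- estimating $\nu_t^N(f)$ directly through the parametric equations coming from the Hamburger-Noether expansion --- is genuinely different from the paper's. The paper reduces by the triangle inequality to analytically irreducible $f$, introduces the last exceptional divisor $E_{\alpha^i_f}$ meeting the strict transform of the germ $C_f$, and uses the classification (quoted from Delgado) of the three possible forms of the intersection multiplicity $\nu_{t_i}(f)=(C_i,C_f)$ (cases a), b), c) in their notation), checking the Lipschitz estimate in the five resulting case-combinations. That is a combinatorial, Noether-formula argument; yours is an analytic, ``piecewise-affine in $t$'' argument.

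However, your proposal has a real gap. The central claim that ``$\gamma$ in turn is an affine function of $t$'' is false. From the definitions, $\beta'_{g+1}=h_{s_g}-k_g+1+1/\gamma$, and Lemma~\ref{lemauno} gives $\beta'_{g+1}$ affine in $t$; hence $\gamma$ is a M\"obius function of $t$, not affine. The parametric exponents you describe have the form $M\gamma+N$ and therefore are also not affine in $t$. What saves the day is precisely the normalization: dividing by $\nu_t(\mathfrak m)=c_0\gamma$ converts each $M\gamma+N$ into $M/c_0+(N/c_0)\cdot(1/\gamma)$, and $1/\gamma=\beta'_{g+1}-(h_{s_g}-k_g+1)$ is affine in $t$ \emph{within a fixed combinatorial sector}. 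Your proof never performs this cancellation; as written it would not yield Lipschitz control.

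There is a second, more structural gap. The Hamburger-Noether data that enter the parametrization --- the length $h_{s_g}$ of the last free row and the entire satellite tail $h_{s_g+1},h_{s_g+2},\ldots$ --- are \emph{not} fixed across $V_\delta$; they are the continued-fraction digits of $\beta'_{g+1}$ and change discontinuously as $t$ varies. Consequently the number of terms and the exponent set in $u(\theta),v(\theta)$ are not ``fixed affine functions of $t$,'' and the quantity $1/\gamma$ itself jumps from near $1$ to near $0$ each time $\beta'_{g+1}$ crosses an integer threshold. The discontinuities of $1/\gamma$ must be compensated by simultaneous jumps in the exponent data, and verifying this is exactly the nontrivial content your ``bookkeeping'' paragraph acknowledges but does not supply. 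You also need separate handling of the divisorial parametrization in $(\theta,\zeta)$ versus the irrational one in $k\langle\theta\rangle$, whereas the paper treats the irrational case uniformly by Proposition~\ref{limit}. One genuine strength of your write-up is that you make explicit the need for a Lipschitz constant of order $O(\deg f)$ to pass from the $\eta_f$ statement to $\hat\mu^N$; the paper merely says this ``follows easily,'' so your observation is a useful clarification, but it does not repair the gaps above.
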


\begin{proof}
We have to prove that, for every pair
of indices $t_1,t_2\in \Delta_{\delta}$, there
exists a real constant $D$, depending only on $H$ and $f$, such that
\[
\left|\nu_{t_1}^N (f) - \nu_{t_2}^N (f) \right| \leq D \left|t_1 -t_2\right|.
\]
Our second statement follows  easily from the fact that $\hat{\mu}^N (\nu_t)=\sup_{f\in k[u,v]}\{\nu_t^N (f)/\deg(f)\}$.

By the triangle inequality, we may assume that {\it $f$ is an analytically irreducible element}. In addition,
by Lemma \ref{lemauno}, denoting by $\beta^{'i}_{j}$ the $j$-th Puiseux exponent of  $\nu_{t_i}$,
$i=1,2$, we can replace $t_1 -t_2$ with $\beta^{'1}_{g+1} - \beta^{'2}_{g+1}$ when necessary.

Consider the embedded resolution of the germ of curve $C_f$ given by $f(u,v)=0$ and, when necessary,
successive blows-ups at the points where the germ meets the exceptional divisor. Denote by $\alpha_f^i$
that vertex in the dual graph of $\nu_{t_i}$ corresponding to the last created  divisor $E_{\alpha_f^i}$  that meets the strict transform of $C_f$; for $i=1,2$, the superscript $i$ always refers to the valuation $\nu_{t_i}$.

For a start, {\it assume that $\alpha_f^i \leq st_g^i$}. Clearly, the above inequality happens for both indexes $i$ because $ st_g^1 =  st_g^2 := st_g $. Then (\ref{AAA}), and Proposition \ref{limit} in the irrational case, prove that $\nu_{t_1}^N(f) = \nu_{t_2}^N(f)$ and the inequality in the statement holds.

From now on, assume that $st_g < \alpha_f^1$, $st_g <  \alpha_f^2$ and that the two valuations $\nu_{t_1}$ and $\nu_{t_2}$ are divisorial. The irrational case follows from the divisorial case and Proposition \ref{limit}, so let us restrict ourselves to the divisorial case. From \cite{delgado}, it can be deduced that, if $C_i$ denotes a suitable germ of curve in $R$ defined by a general element of the valuation $\nu_{t_i}$ and $(\cdot , \cdot)$  intersection multiplicity at $p$, the values $\nu_{t_i}(f)= (C_i,C_f)$ only admit the following three possibilities:
\begin{description}
  \item[a)] $\nu_{t_i}(f)=e_{g-1}^f \bar{\beta}_g^i + c_i^f e_g^f e^i_g$, where $c_i^f$ is the number of common free points after $st_g$ corresponding to the dual graphs of $C_i$ and $C_f$. This situation happens when the free points in the dual graphs of $C_i$ and $C_f$ do not coincide up to the last free point of one of them (which must have a satellite point after that last free point).
  \item[b)] $\nu_{t_i}(f)=e^f_g \bar{\beta}_{g+1}^i$, which holds when $\alpha^i_f \in [st^i_{g+1}, \ell^i_{g+1}]$, i.e., $st^i_{g+1} \preccurlyeq \alpha^i_f \preccurlyeq  \ell^i_{g+1}$.
  \item[c)] $\nu_{t_i}(f)=e^i_g \bar{\beta}_{g+1}^f$, which holds when $\alpha^i_f \not \in [st^i_{g+1}, \ell^i_{g+1}]$ and it does not correspond to a free divisor.
\end{description}
Let us show that the result follows for any pair coming from the above three possibilities and the proposition will be proved. We also notice that  the situation where one valuation (say $\nu_{t_1}$) corresponds to the case a) and the other one to the case c) is not possible. This is so because $\nu_{t_1}$ and $\nu_{t_2}$ either have the same free points or one of them adds new ones to the other.

Let us prove the inequality for the five remaining cases:

i) {\it If both valuations $\nu_{t_1}$ and $\nu_{t_2}$ are in case a),}  then
\[
\left|\nu_{t_1}^N (f) - \nu_{t_2}^N (f) \right| = e_{g-1}^f \left( \frac{\bar{\beta}_{g}^1}{\bar{\beta}_{0}^1} - \frac{\bar{\beta}_{g}^2}{\bar{\beta}_{0}^2}
 \right) +e_{g}^f
 \left( \frac{e_{g}^1}{\bar{\beta}_{0}^1} c_1^f - \frac{e_{g}^2}{\bar{\beta}_{0}^2} c_2^f
 \right)
\]
and since $\bar{\beta}_{g}^1 /\bar{\beta}_{0}^1 = \bar{\beta}_{g}^2 /\bar{\beta}_{0}^2$ and $ e_{g}^1 /\bar{\beta}_{0}^1 = e_{g}^2 /\bar{\beta}_{0}^2 = e_{g}^\delta /\bar{\beta}_{0}^\delta$, we get
\[
\left|\nu_{t_1}^N (f) - \nu_{t_2}^N (f)\right|  \leq \frac{e^f_ g e^\delta_g}{\bar{\beta}_0^\delta}
\left|\beta^{'1}_{g+1} - \beta^{'2}_{g+1}\right|,
\]
where $e^f_ g e^\delta_g/\bar{\beta}_0^\delta$ depends only on $H$ and $C_f$ and the inequality
holds from the expression of the values $\beta^{'1}_{g+1}$ and $\beta^{'2}_{g+1}$ as continued fractions.

ii) {\it If the valuations $\nu_{t_1}$ and $\nu_{t_2}$ are in cases a) and b) respectively,}  then
by (\ref{Delta}), we get the equality
\[
e_g^f \bar{\beta}_{g+1}^2 = e^f_g e^2_g (\beta^{'2}_{g+1} -1) + e^f_g n^2_g
\bar{\beta}_{g}^2.
\]
Taking into account that $e^f_{g-1} =e^f_g n^2_g$, we deduce that
\[
\left|\nu_{t_1}^N (f) - \nu_{t_2}^N (f)\right| = \left|\frac{\nu_{t_1}(f)}{\bar{\beta}_0^{1}} - \frac{\nu_{t_2}(f)}{\bar{\beta}_0^{2}}\right| =
\frac{e^f_g e^1_g}{\bar{\beta}_0^{1}} \left| c_1^f - (\beta^{'2}_{g+1}-1)  \right| \leq \frac{e^f_g
e^\delta_g}{\bar{\beta}_0^{\delta}} \left| \beta^{'1}_{g+1} - \beta^{'2}_{g+1} \right|,
\]
which concludes this case.

iii) {\it If the valuations $\nu_{t_1}$ and $\nu_{t_2}$ are both in case b),} then
\[
\left|\nu_{t_1}^N (f) - \nu_{t_2}^N (f)\right| =
\left|\frac{\nu_{t_1}(f)}{\bar{\beta}_0^{1}} - \frac{\nu_{t_2}(f)}{\bar{\beta}_0^{2}}\right| =
\left|\frac{e^f_g \bar{\beta}_{g+1}^{1}}{\bar{\beta}_0^{1}} - \frac{e^f_g
\bar{\beta}_{g+1}^{2}}{\bar{\beta}_0^{2}}\right|.
\]
 Multiplying by $e^1_g / \bar{\beta}_0^{1} = e^2_g / \bar{\beta}_0^{2} = e^\delta_g / \bar{\beta}_0^{\delta}$, we obtain that
\[
\frac{ e^\delta_g}{\bar{\beta}_0^{\delta}} \left|\frac{\nu_{t_1}(f)}{\bar{\beta}_0^{1}} - \frac{\nu_{t_2}(f)}{\bar{\beta}_0^{2}}\right| =
\frac{e^f_g e^\delta_g}{\bar{\beta}_0^{\delta}} \left| \frac{\bar{\beta}_{g+1}^{1}}{\bar{\beta}_0^{1}} -
\frac{ \bar{\beta}_{g+1}^{2}}{\bar{\beta}_0^{2}}\right|
\]
\[
= e^f_g \left| \frac{e^1_g \bar{\beta}_{g+1}^{1}}{(\bar{\beta}_0^{1})^2} - \frac{ e^2_g
\bar{\beta}_{g+1}^{2}}{(\bar{\beta}_0^{2})^2}\right| = e^f_g \left| \left[\mathrm{vol}^N(\nu_{t_1})\right]^{-1}
 - \left[\mathrm{vol}^N(\nu_{t_2})\right]^{-1}\right| = e^f_g \left|t_1 -t_2\right|.
\]

iv) {\it If the valuation $\nu_{t_1}$ corresponds to case b) and $\nu_{t_2}$ to case c),} then
\[
\left|\nu_{t_1}^N (f) - \nu_{t_2}^N (f)\right| = \left|\frac{\nu_{t_1}(f)}{\bar{\beta}_0^{1}} - \frac{\nu_{t_2}(f)}{\bar{\beta}_0^{2}}\right| =
\left|\frac{e^f_g \bar{\beta}_{g+1}^{1}}{\bar{\beta}_0^{1}} - \frac{e^2_g
\bar{\beta}_{g+1}^{f}}{\bar{\beta}_0^{2}}\right|.
\]
Now, on the one hand, it holds that
\[
\frac{e^f_g \bar{\beta}_{g+1}^{1}}{\bar{\beta}_0^{1}} =
\frac{e^1_g \bar{\beta}_{0}^{f}}{\bar{\beta}_0^{1}} \cdot \frac{
\bar{\beta}_{g+1}^{1}}{\bar{\beta}_0^{1}} = \bar{\beta}_{0}^{f} \left( \frac{e^1_g
\bar{\beta}_{g+1}^{1}}{(\bar{\beta}_0^{1})^2} \right).
\]
On the other hand, we get
\[
\frac{e^2_g \bar{\beta}_{g+1}^{f}}{\bar{\beta}_0^{2}} = \bar{\beta}_{0}^{f} \left( \frac{e^2_g
\bar{\beta}_{g+1}^{f}}{\bar{\beta}_0^{f} \bar{\beta}_0^{2}}      \right) = \bar{\beta}_{0}^{f} \left(
\frac{e^2_g e^f_g \bar{\beta}_{g+1}^{f}}{\bar{\beta}_0^{f} \bar{\beta}_0^{2} e^f_g}      \right).
\]
Formula (\ref{Delta}) implies that
\[
\frac{\bar{\beta}_{g+1}^{f}}{e^f_g} = \left(\beta^{'f}_{g+1} -1 \right) + \frac{ n_g^f
\bar{\beta}_{g}^{f}}{e^f_g} = \left( \beta^{'f}_{g+1} -1 \right) - \left( \beta^{'2}_{g+1} -1 \right) +
\frac{\bar{\beta}_{g+1}^{2}}{e_g^2},
\]
because $(n_g^f \bar{\beta}_{g}^{f})/e^f_g = (n_g^2 \bar{\beta}_{g}^{2})/e^2_g$. Thus
\[
\frac{e^2_g \bar{\beta}_{g+1}^{f}}{\bar{\beta}_0^{2}}= \bar{\beta}_{0}^{f} \left( \frac{e^2_g
e^f_g}{\bar{\beta}_0^{f} \bar{\beta}_0^{2}} \right) \left( \frac{ \bar{\beta}_{g+1}^{2}}{e^2_g} +
\eta \right),
\]
where $\eta = \beta^{'f}_{g+1} - \beta^{'2}_{g+1}$.  Since $e^f_g / \bar{\beta}_{0}^{f} = e^2_g/\bar{\beta}_{0}^{2}$, we get
\begin{align*}
\left|\nu_{t_1}^N (f) - \nu_{t_2}^N (f)\right|
& \leq \left |
\bar\beta_{0}^{f} \frac{e_g^1 \bar{\beta}_{g+1}^{1}}{(\bar\beta_{0}^{1})^2}
- \bar\beta_{0}^{f} \frac{e^2_g e^f_g \bar{\beta}_{g+1}^{2}}{\bar{\beta}_{0}^{f}\bar{\beta}_{0}^{2}
e^2_g} + \frac{e^2_g e_g^f}{\bar\beta_0^2} \eta  \right | \\
& \leq
\bar\beta_{0}^{f} \left| \frac{e^1_g \bar\beta_{g+1}^{1}}{(\bar{\beta}_{0}^{1})^2} - \frac{e^2_g
\bar\beta_{g+1}^{2}}{(\bar{\beta}_{0}^{2})^2}\right |  + \left | \frac{e^2_g e_g^f \bar\beta_0^f}{\bar\beta_0^2\bar\beta_0^f} \eta    \right | \\
& = \bar\beta_{0}^{f}\left| \left[\mathrm{vol}^N (
\nu_{t_1}) \right]^{-1} - \left[\mathrm{vol}^N ( \nu_{t_2}) \right]^{-1}\right| + \frac{(e^f_g)^2 \bar\beta_0^f}{(\bar\beta_0^f)^2} \left | \beta^{'f}_{g+1} - \beta^{'2}_{g+1} \right |.
\end{align*}
This concludes the proof in this case because it holds that $\beta^{'1}_{g+1} <\beta^{'f}_{g+1} <\beta^{'2}_{g+1}$ and, as a consequence,
\[
\left | \beta^{'f}_{g+1} - \beta^{'2}_{g+1} \right | \leq \left | \beta^{'1}_{g+1} - \beta^{'2}_{g+1} \right |.
\]
Indeed, the former chain of inequalities can be deduced from the position of the vertices $\alpha_f^1$ and $\alpha_f^2$ within the dual graphs of $\nu_{t_1}$ and $\nu_{t_2}$, and the fact that the continued fraction given by the value $\beta^{'1}_{g+1}$ (respectively, $\beta^{'2}_{g+1}$, $\beta^{'f}_{g+1}$) determines the subgraph $\Gamma_{g+1}$ of the dual graph of $\nu_{t_1}$ (respectively, $\nu_{t_2}$, $C_f$).



{\it v) If both valuations are in case c),} the result follows easily, since
\[
\left|\nu_{t_1}^N (f) - \nu_{t_2}^N (f)\right| = \left|\frac{\nu_{t_1}(f)}{\bar{\beta}_0^{1}} - \frac{\nu_{t_2}(f)}{\bar{\beta}_0^{2}}\right| =
\left|\frac{e^1_g \bar{\beta}_{g+1}^{f}}{\bar{\beta}_0^1} - \frac{e^2_g
\bar{\beta}_{g+1}^{f}}{\bar{\beta}_0^2}\right| = 0,
\]
which finishes our proof.
\end{proof}

From the above proposition, it can be deduced that the map $\mathcal{V}_{qm} \rightarrow \mathbb{R}$, given by $\nu \mapsto \hat{\mu}^N(\nu)$, is continuous, where $\mathcal{V}_{qm}$ is the tree of divisorial, curve and irrational valuations endowed with the strong topology of \cite{fj}. This fact will not be used in this paper.

\section{A Nagata-type conjecture for divisorial and irrational valuations}
\label{secNagata}

In this section, we consider divisorial and irrational valuations of the fraction field of $R= \mathcal{O}_{\mathbb{P}^2,p}$ centered at $R$, and use the notations given in Section \ref{minimal}.


\subsection{Very general valuations}

Let $\nu$ be as above and let $\Gamma$ be its dual graph. Consider the set ${\mathcal V}_{\Gamma}$ of plane valuations whose dual graph is $\Gamma$. Notice that the Hamburger-Noether expansions of valuations in ${\mathcal V}_{\Gamma}$ have the same \emph{structure}, that is, the same values $g, s_1, s_2, \ldots, s_g, k_1, k_2, \ldots,k_g, h_0, h_1, \ldots, h_{s_g}, \ldots$ and the same number and size of free rows (they differ only in the coefficients $a_{s_i k_i}$). In fact different choices of coefficients give rise to different valuations in ${\mathcal V}_{\Gamma}$. Since the coefficients $a_{s_i k_i}$, $1\leq i\leq g-1$, and $a_{s_g k_g}$, whenever its corresponding row is free, must be different from 0, the set ${\mathcal V}_{\Gamma}$ can be identified with the set of closed points of a Zariski open subset ${\mathcal U}_{\Gamma}$ of the affine space $\mathbb{A}_k^{b(\Gamma)}$, where $b(\Gamma):=h_0+\sum_{j=1}^{g'}(h_j-k_j+1)$, $g'$ being equal to $g$ (respectively, $g-1$) if the row whose left-hand side equals $w_{s_g-1}$ is free (respectively, otherwise).


\begin{pro}\label{yyy}
Let $\Gamma$ be the dual graph of a divisorial or irrational valuation. Then, for each positive integer $d$, the function ${\mathcal V}_{\Gamma}\rightarrow \mathbb{R}$, given by $\nu \mapsto \mu_d(\nu)$, gives rise (via the above identification) to an upper semicontinuous function $\phi: {\mathcal U}_{\Gamma} \rightarrow \mathbb{R}$.
\end{pro}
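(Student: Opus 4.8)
The plan is to interpret $\mu_d(\nu)$ as the maximum, over a finite-dimensional family of curves, of an intersection-type quantity that varies in a constructible way with the coefficients $a_{s_i k_i}$ parametrizing ${\mathcal U}_\Gamma$. First I would recall that, for a fixed $d$, every $f\in k[u,v]$ with $\deg(f)\le d$ can be regarded as a point of the projective space $\mathbb{P}^{M}$ with $M=\binom{d+2}{2}-1$ of degree-$\le d$ curves, and that $\nu(f)=\sum_{i\ge 1} m_i\,\mathrm{mult}_{p_i}(C_f)$ by Equality (\ref{AAA}) (together with Proposition \ref{limit} in the irrational case, after passing to a sufficiently large truncation since only finitely many $m_i$ can be nonzero on a degree-$d$ curve; more precisely, once the strict transform of $C_f$ no longer meets the exceptional locus the remaining multiplicities vanish, and the relevant index is bounded in terms of $d$). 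Thus $\mu_d(\nu)=\max_{0\ne f}\ \sum_i m_i\,\mathrm{mult}_{p_i}(C_f)$, the maximum taken over the finitely many ``multiplicity strata'' of $\mathbb{P}^M$.

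Second, I would make precise how the infinitely near points $p_i$ depend on the parameter point $a\in{\mathcal U}_\Gamma$. The dual graph $\Gamma$ is fixed, so the combinatorial data $g,s_j,k_j,h_j,\dots$ are constant; the first few points $p_1,p_2,\dots$ are the origin and its successive infinitely near points determined by $(u,v)$ and are independent of $a$, while each free-row coefficient $a_{s_j k_j}$ (and the later coefficients in a free row) prescribes, algebraically, the position of the corresponding free point $p_i$ on its exceptional divisor $E_{i-1}\cong\mathbb{P}^1$. Hence the whole configuration ${\mathcal C}_{\nu}$ (up to the level needed for degree-$d$ curves) forms a ``cluster'' whose points move algebraically — indeed, there is a smooth morphism of the universal blow-up over ${\mathcal U}_\Gamma$ realizing the $p_i(a)$ as sections. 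Meanwhile the weights $m_i$ are fixed by $\Gamma$ (they are the values $\nu(\omega_i)$ repeated $h_i$ times, which depend only on the graph).

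Third, I would fix a multiplicity vector $\mathbf{e}=(e_1,e_2,\dots)$ (only finitely many entries, bounded for degree-$d$ curves) and consider the incidence set
\[
Z_{\mathbf e}=\{(a,f)\in {\mathcal U}_\Gamma\times\mathbb{P}^M \ :\ \mathrm{mult}_{p_i(a)}(C_f)\ge e_i\ \text{for all }i\},
\]
which is Zariski closed in ${\mathcal U}_\Gamma\times\mathbb{P}^M$ because each condition $\mathrm{mult}_{p_i(a)}(C_f)\ge e_i$ translates, after iterated blow-up along the sections $p_i(a)$, into the vanishing of finitely many polynomials in $(a,f)$ (the coefficients of the strict transform at the relevant chart). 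Its image $W_{\mathbf e}=\mathrm{pr}_1(Z_{\mathbf e})\subseteq {\mathcal U}_\Gamma$ is constructible, and for $a\in W_{\mathbf e}$ there is a curve through the cluster with those multiplicities, hence $\phi(a)=\mu_d(\nu_a)\ge \sum_i m_i e_i$. Conversely $\phi(a)$ equals $\max\{\sum_i m_i e_i : a\in \overline{W_{\mathbf e}}\ \text{or}\ a\in W_{\mathbf e}\}$ over the finitely many relevant $\mathbf e$; what I actually need is only that $\phi\ge c$ holds on a Zariski-closed set for each constant $c$, i.e. upper semicontinuity. The cleanest way is: $\phi(a)=\max_{\mathbf e}\big(\sum_i m_i e_i\big)\cdot \mathbf{1}_{W_{\mathbf e}}(a)$, a finite max; since $\{a:\phi(a)\ge c\}=\bigcup_{\sum m_i e_i\ge c} W_{\mathbf e}$ and I want this \emph{closed}, I would instead show directly that the set where ``there exists a degree-$d$ curve through the cluster with $\nu$-value $\ge c$'' is closed, by writing it as $\mathrm{pr}_1$ of the closed set $\{(a,f): \nu_a(f)\ge c\}$ and using that $\mathrm{pr}_1$ is a closed map after compactifying the $f$-direction to $\mathbb{P}^M$ (properness of $\mathbb{P}^M\to\mathrm{Spec}\,k$). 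The value $\nu_a(f)$ jumps up on closed subsets (more multiplicities or higher ones is a closed condition), so $\{(a,f):\nu_a(f)\ge c\}$ is closed, and its projection is closed.

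The main obstacle is the third step: making rigorous that $\{(a,f)\in{\mathcal U}_\Gamma\times\mathbb{P}^M : \nu_a(f)\ge c\}$ is genuinely Zariski closed. This requires carefully setting up the universal sequence of blow-ups over ${\mathcal U}_\Gamma$, checking that the centers $p_i(a)$ are well-defined sections (which uses that $\Gamma$ — hence the proximity relations and which points are free or satellite — is constant over ${\mathcal U}_\Gamma$), and verifying that ``$\mathrm{mult}_{p_i(a)}$ of the $(i-1)$-st strict transform of $C_f$ is $\ge e_i$'' is cut out by equations polynomial in $(a,f)$; the bookkeeping that $\nu_a(f)=\sum m_i\,\mathrm{mult}_{p_i(a)}(C_f)$ with the $m_i$ constant, plus the bound on how many $i$ matter for $\deg\le d$ (so the product of blow-ups is finite and the irrational case reduces to a divisorial truncation via Proposition \ref{limit} and Equality (\ref{AAA})), is the technical heart. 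Once that closedness is in place, upper semicontinuity of $\phi$ is immediate from properness of the projection $\mathbb{P}^M\to\mathrm{pt}$.
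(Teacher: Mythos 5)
Your overall strategy matches the paper's: fix $d$, exhibit $\{(\mathbf{a},f)\in\mathcal{U}_\Gamma\times\mathbb{P}(k[u,v]_d):\nu_{\mathbf{a}}(f)\geq\alpha\}$ as a Zariski-closed set, and push it down to $\mathcal{U}_\Gamma$ by the (proper) projection, so that $\phi^{-1}([\alpha,+\infty))$ is closed. However, the \emph{means} of establishing closedness differ in a way worth noting. The paper works purely with the Hamburger--Noether parametrization: it substitutes the finite-term series $u_{\mathbf{a}}(t,z),v_{\mathbf{a}}(t,z)$ (or $u_{\mathbf{a}}(t),v_{\mathbf{a}}(t)$ in the irrational case) into $f$, observes that the result lives in a fixed finite-dimensional space $Q$ of series with exponent bound $M(\Gamma,d)$, and notes that the closedness of $\{\gamma\in Q:\mathrm{ord}_t(\gamma)\geq\alpha\}$ is pulled back along the algebraic map $\psi(\mathbf{a},f)=f(u_{\mathbf{a}},v_{\mathbf{a}})$. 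Your route goes instead through Equality (\ref{AAA}), the universal cluster of infinitely near points $p_i(\mathbf{a})$ over $\mathcal{U}_\Gamma$, and the multiplicity strata $Z_{\mathbf{e}}$ of the family of blow-ups. Both are legitimate, and your picture is geometrically transparent.

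That said, there is a point your argument flags but does not close, and it is exactly what the paper's HN approach makes trivial. In the irrational case, $\mathcal{C}_{\nu}$ is infinite; you need a bound $N=N(\Gamma,d)$ so that $\mathrm{mult}_{p_i(\mathbf{a})}(C_f)=0$ for all $i>N$, all $\mathbf{a}\in\mathcal{U}_\Gamma$, and all $f$ of degree $\leq d$, in order to truncate to finitely many blow-ups and to finitely many strata $Z_{\mathbf{e}}$. You assert this (``the relevant index is bounded in terms of $d$'') but do not prove it, and it is not immediate: for an irrational valuation the normalized sequence of values $m_i^N$ tends to $0$ along the satellite tail, so the convergence of $\sum_i m_i^N\,\mathrm{mult}_{p_i}(C_f)$ by itself does not force the multiplicities to vanish beyond a uniform index. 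One can obtain such a bound by resolution/intersection-theoretic arguments (once the strict transform of a degree-$d$ germ is smooth and transversal to the exceptional locus, it can meet at most one further satellite center, and the number of blow-ups needed to reach this state is bounded by singularity invariants that are $O(d^2)$), but this is a nontrivial lemma that you would need to spell out. Also, Equality (\ref{AAA}) as stated in the paper is for divisorial valuations, and its extension to the irrational case via Proposition \ref{limit} requires passing to normalized sequences of values of the $\nu_i$ and verifying the relevant limits commute with the finite truncation. The paper sidesteps all of this: the HN series $u(t),v(t)\in k\langle t\rangle$ have finitely many terms \emph{by construction}, so the bound $M$ and the closedness condition are immediate without any cluster bookkeeping. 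Supplying the missing uniform-bound lemma would make your proof complete, but as written it is shorter than it should be precisely at the technical heart you correctly identify.
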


\begin{proof}
It is enough to prove that, for any $\alpha \in \mathbb{R}$, the preimage  $\phi^{-1}[\alpha,+\infty)$ is Zariski-closed.



Consider $\mathbf{a} \in {\mathcal U}_{\Gamma}$ and $\nu_\mathbf{a}$ its corresponding valuation. As mentioned, the Hamburger-Noether expansion of $\nu_\mathbf{a}$ provides parametric equations $u= u_{\mathbf{a}}(t,z), v= v_{\mathbf{a}}(t,z)$, where $u_{\mathbf{a}}(t,z),v_{\mathbf{a}}(t,z)\in k[\![t,z]\!]$ (respectively, $u= u_{\mathbf{a}}(t), v= v_{\mathbf{a}}(t)$, where $u_{\mathbf{a}}(t),v_{\mathbf{a}}(t)\in k\langle t \rangle\!$), when $\nu$ is divisorial (respectively, irrational). These equations allow us to compute the valuation $\nu(f)$ for any $f \in R$ as follows: $\nu(f)= \mathrm{ord}_t [f (u_{\mathbf{a}}(t,z), v_{\mathbf{a}}(t,z))]$ (respectively, $\nu(f)= \mathrm{ord}_t [f (u_{\mathbf{a}}(t), v_{\mathbf{a}}(t))]$) in case $\nu$ is divisorial (respectively, irrational).



Since the series involved in the above parameterizations have a finite number of non-zero terms, there exists a real number $M$ (that depends only on $\Gamma$ and $d$) such that the series which can be obtained by expressions $f (u_{\mathbf{a}}(t,z), v_{\mathbf{a}}(t,z))$ (respectively, $f (u_{\mathbf{a}}(t), v_{\mathbf{a}}(t))$), where $f$ is a polynomial in $k[u,v]$ whose degree is not larger than $d$ and $\mathbf{a} \in {\mathcal U}_{\Gamma}$, belong to the finite-dimensional vector subspace $Q$ of $k[\![t,z]\!]$ (respectively, $k\langle t \rangle$) generated by monomials of degree less than or equal to $M$.

To finish, consider the map $\psi: {\mathcal U}_{\Gamma}\times k[u,v]_d\rightarrow Q$  defined by $\psi(\mathbf{a},f)=f(u_{\mathbf{a}}(t,z),v_{\mathbf{a}}(t,z))$  (respectively,~$\psi(\mathbf{a},f)=f(u_{\mathbf{a}}(t),v_{\mathbf{a}}(t))$) if $\Gamma$ corresponds to a divisorial (respectively, irrational) valuation. The set $\{(\mathbf{a},f)\in {\mathcal U}_{\Gamma}\times k[u,v]_d\mid \nu_{\mathbf{a}}(f)\geq \alpha \}$ is Zariski-closed because it coincides with the pre-image $\psi^{-1}(S)$, by $\psi$, of the set $S:=\{\gamma \in Q \mid {\rm ord}_t(\gamma)\geq \alpha\}$ which is, clearly, a Zariski-closed subset of $Q$. Since $\psi^{-1}(S)$ is closed under scalar multiplication on the second component, it determines a Zariski-closed subset of ${\mathcal U}_{\Gamma}\times \mathbb{P}(k[u,v]_d)$, whose projection to ${\mathcal U}_{\Gamma}$ is $\{\mathbf{a}\in {\mathcal U}_{\Gamma}\mid \mu_d(\nu_{\mathbf{a}})\geq \alpha\}=\phi^{-1}([\alpha,+\infty))$, which concludes the proof.
\end{proof}

\begin{de}\label{vgd}
{\rm
Let $\Gamma $ be dual graph of a divisorial or irrational valuation. We will say that a property $S$ holds for a general (respectively, very general) valuation in ${\mathcal V}_{\Gamma}$ if there exists a finite (respectively, countable) collection of Zariski-open subsets of ${\mathcal U}_{\Gamma}$ such that $S$ holds for every valuation in the intersection of those sets. 
}
\end{de}
For simplicity, when in a statement we say that \emph{a divisorial or irrational valuation $\nu$ is general (respectively, very general)}, we mean that certain property given in the statement (which is implicitly understood) holds for a general (respectively, very general) valuation in ${\mathcal V}_{\Gamma}$, where $\Gamma$ is the dual graph of $\nu$.

An immediate consequence of Proposition \ref{yyy} is the following corollary.

\begin{cor}\label{uuu}
Let $\Gamma$ be the dual graph of a divisorial or irrational valuation $\nu$. Then $\hat{\mu}^N(\nu)$ takes its smallest value for very general valuations in ${\mathcal V}_{\Gamma}$.
\end{cor}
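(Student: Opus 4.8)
The statement is a soft, essentially formal, consequence of Proposition~\ref{yyy} together with Definition~\ref{vgd}, and this is the route I would take.

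First I would reduce to the non-normalized function. Since $\nu^{N}=\nu/\nu(\mathfrak{m})$ and $\nu(\mathfrak{m})=m_{1}$ is the first term of the sequence of values of $\nu$, which is determined by the dual graph $\Gamma$, the functions $\nu\mapsto\hat{\mu}(\nu)$ and $\nu\mapsto\hat{\mu}^{N}(\nu)$ on $\mathcal{V}_{\Gamma}$ differ by the single positive scalar $\nu(\mathfrak{m})^{-1}$; hence it suffices to prove that $\hat{\mu}$ attains its smallest value on $\mathcal{V}_{\Gamma}$ exactly at the very general valuations, and then rescale.

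Next, I would fix $d\in\mathbb{Z}_{>0}$ and consider the upper semicontinuous function $\phi_{d}\colon\mathcal{U}_{\Gamma}\to\mathbb{R}$, $\mathbf{a}\mapsto\mu_{d}(\nu_{\mathbf{a}})$, provided by Proposition~\ref{yyy}. The key observation is that $\phi_{d}$ has \emph{finite image}: every value $\mu_{d}(\nu_{\mathbf{a}})$ lies in the value semigroup $S$, which is generated by the maximal contact values and therefore is the same for all valuations in $\mathcal{V}_{\Gamma}$, and it is bounded above by the constant $M=M(\Gamma,d)$ appearing in the proof of Proposition~\ref{yyy}; since $S$ is finitely generated with positive generators, $S\cap[0,M]$ is finite. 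Consequently $\phi_{d}$ attains a minimum $m_{d}:=\min_{\nu'\in\mathcal{V}_{\Gamma}}\mu_{d}(\nu')$, and the locus where it is attained, $U_{d}:=\phi_{d}^{-1}(\{m_{d}\})=\mathcal{U}_{\Gamma}\setminus\phi_{d}^{-1}\bigl([m_{d}+\varepsilon,\infty)\bigr)$ (with $\varepsilon>0$ below the gap to the next value of $\phi_{d}$), is a nonempty Zariski-open subset of $\mathcal{U}_{\Gamma}$; alternatively one may invoke the Noetherianity of $\mathcal{U}_{\Gamma}$, so that the descending chain of closed sets $\phi_{d}^{-1}([m+1/n,\infty))$ stabilizes, forcing the infimum to be attained and its fibre to be open. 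Now the countable family $\{U_{d}\}_{d\ge 1}$ witnesses, via Definition~\ref{vgd}, that the property ``$\mu_{d}(\nu)=m_{d}$ for all $d$'' holds for a very general valuation $\nu\in\mathcal{V}_{\Gamma}$, i.e.\ on $\bigcap_{d\ge 1}U_{d}$. Because $d\mapsto\mu_{d}(\nu')$ is superadditive for every fixed $\nu'$ (multiply polynomials realizing $\mu_{d_{1}}(\nu')$ and $\mu_{d_{2}}(\nu')$), so is $d\mapsto m_{d}$, and Fekete's lemma gives that $c:=\lim_{d\to\infty}m_{d}/d=\sup_{d}m_{d}/d$ exists and is finite (since $m_{d}/d\le\mu_{d}(\nu')/d\to\hat{\mu}(\nu')<\infty$). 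Therefore $\hat{\mu}(\nu')=\lim_{d}\mu_{d}(\nu')/d\ge c$ for every $\nu'\in\mathcal{V}_{\Gamma}$, while $\hat{\mu}(\nu)=\lim_{d}m_{d}/d=c$ for every very general $\nu$; rescaling by $\nu(\mathfrak{m})^{-1}$ yields the assertion for $\hat{\mu}^{N}$.

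I do not expect any genuine obstacle here. The only two points needing a little care are (i) that the minimum of each $\mu_{d}$ is realized on a \emph{nonempty Zariski-open} set, which follows from upper semicontinuity combined with the discreteness of the common value semigroup (or with Noetherianity), and (ii) the passage from the countably many ``$d$-by-$d$'' semicontinuity statements to a single statement about the limit $\hat{\mu}$ --- which is exactly what the countable intersection in the definition of \emph{very general} is designed to handle, once one knows, by superadditivity, that $\lim_{d}m_{d}/d$ exists.
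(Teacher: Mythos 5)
Your argument is correct and is precisely the ``immediate consequence'' the paper has in mind when it derives the corollary from Proposition~\ref{yyy} without further comment: upper semicontinuity of each $\phi_d$ together with the discreteness of $S\cap[0,M]$ (or Noetherianity of $\mathcal U_\Gamma$) gives a nonempty open locus $U_d$ where $\mu_d$ is minimal, and intersecting over $d$ yields the very general set on which $\hat{\mu}^N$ is smallest. A minor remark: the appeal to Fekete's lemma is superfluous, since $\hat{\mu}(\nu)=\lim_d\mu_d(\nu)/d$ already exists for every fixed $\nu$, so once $\mu_d(\nu)=m_d$ for all $d$ on $\bigcap_d U_d$ the limit $\lim_d m_d/d$ exists automatically and bounds $\hat{\mu}(\nu')$ from below for every $\nu'\in\mathcal V_\Gamma$.
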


\subsection{The conjecture}

Let $({\mathcal C}, \bold{r})$ be a finite weighted configuration over $\mathbb{P}^2$, that is, a pair such that ${\mathcal C}$ is a finite configuration of infinitely near points of $\mathbb{P}^2$ and $\bold{r}$ is a map that assigns, to each point $p\in {\mathcal C}$, a non-negative integer $r_p$ called its multiplicity. Assume also that the weighted configuration is \emph{consistent}, that is, $r_p\geq \sum_{q\rightarrow p} r_q$ for all $p\in {\mathcal C}$. Consider also the ideal sheaf
$${\mathcal H}_{({\mathcal C}, \bold{r})}:=\pi_* \co_{X} \left( -\sum_{p\in {\mathcal C}} r_p E^*_p \right),$$
where $\pi:X\rightarrow \mathbb{P}^2$ denotes the composition of the blowing-ups centered at the points of ${\mathcal C}$ and each $E^*_p$ denotes the total transform on $X$ of the exceptional divisor given by the blowing-up centered at $p$.

If $d$ is a large enough positive integer, it holds that
$$
h^0(\gp^2,\co_{\gp^2}(d)\otimes {\mathcal H}_{({\mathcal C}, \bold{r})})=\frac{(d+1)(d+2)}{2}-\sum_{p\in {\mathcal C}} \frac{r_p(r_p+1)}{2}.
$$


The following conjecture is proposed in \cite{greuel} and predicts that, when the weighted configuration is general enough, the dimension of the space of global sections of the sheaf $\co_{\gp^2}(d)\otimes {\mathcal H}_{({\mathcal C}, \bold{r})}$ has the expected value (given by the above formula).

\begin{conj}\label{conj1}

Assume that $k=\mathbb{C}$. Let $({\mathcal C}, \bold{r})$ be a consistent finite weighted configuration of the projective plane. Suppose that $({\mathcal C}, \bold{r})$ is general among all the weighted configurations with the same proximities and let $d$ be an integer which is larger than the sum of the three biggest multiplicities involved in ${\mathcal C}$. Then
$$
h^0 \left(\gp^2,\co_{\gp^2}(d)\otimes {\mathcal H}_{({\mathcal C}, \bold{r})}\right)=\max \left\{0,\frac{(d+1)(d+2)}{2}-\sum_{p\in {\mathcal C}} \frac{r_p(r_p+1)}{2}\right\}.
$$
\end{conj}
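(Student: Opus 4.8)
The statement is the Greuel--Lossen--Shustin Conjecture, which at present is open; what follows is the line of attack I would pursue rather than a complete argument. The plan is to reduce, by upper semicontinuity, to the construction of one good configuration, and then to build that configuration by a Horace-type induction combined with a degeneration of the plane. First note that the number of linear conditions imposed on $H^0(\mathbb{P}^2,\mathcal{O}_{\mathbb{P}^2}(d))$ by the scheme $Z=\sum_{p\in\mathcal{C}}r_pE_p^*$ is at most $\sum_{p}\binom{r_p+1}{2}$, so for \emph{every} admissible configuration one has $h^0(\mathcal{O}_{\mathbb{P}^2}(d)\otimes\mathcal{H}_{(\mathcal{C},\mathbf{r})})\geq\max\{0,\binom{d+2}{2}-\sum_p\binom{r_p+1}{2}\}$. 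Since $h^0$ is upper semicontinuous on the irreducible parameter space of weighted configurations with the fixed proximity graph (an open subset of an affine space, as in the discussion of $\mathcal{U}_{\Gamma}$), it suffices to exhibit a \emph{single} such configuration for which equality holds, i.e. for which the fat-point conditions are linearly independent (when the right-hand side is nonnegative) or for which the linear system is empty (otherwise).

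To produce that configuration I would run a double induction on $d$ and, inside a fixed $d$, on $\sum_p r_p$, via Castelnuovo's method. Fix a line $L\subset\mathbb{P}^2$, specialize a carefully chosen sub-collection of the free points of $\mathcal{C}$ so that their supports lie on $L$, and use the residual exact sequence
\[
0\longrightarrow \mathcal{I}_{\operatorname{Res}_L Z}(d-1)\longrightarrow \mathcal{I}_{Z}(d)\longrightarrow \mathcal{I}_{Z\cap L,\,L}(d)\longrightarrow 0 .
\]
The trace term lives on $L\cong\mathbb{P}^1$, where non-speciality is a single numerical inequality, while the residual scheme $\operatorname{Res}_L Z$ has strictly smaller degree or total multiplicity and so falls under the inductive hypothesis, provided one checks that its proximity data are again admissible and that $d-1$ still dominates the sum of its three largest multiplicities. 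When several multiplicities are of the same order as $d$, the plain specialization onto $L$ is too weak; there I would instead degenerate $\mathbb{P}^2$ to a transverse union of two surfaces glued along $L$ (a Ciliberto--Miranda style degeneration), distribute the fat points between the two components, run the same bookkeeping on the central fibre, and transfer non-speciality to the general fibre by a limit linear system argument.

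The hypothesis $d>\mu_1+\mu_2+\mu_3$, with $\mu_1\geq\mu_2\geq\mu_3$ the three largest multiplicities, is exactly what keeps the induction away from the classical obstructions: it guarantees that after removing a line (or degenerating) the remaining data stay in range, and that no low-degree rational curve through the fat points forces extra sections, which is how the Nagata/$(-1)$-curve counterexamples all arise. The main obstacle — and the reason the conjecture is still open — is the combinatorial control of the proximity relations under these specializations: when $\mathcal{C}$ carries long satellite chains, $\operatorname{Res}_L Z$ is no longer a cluster of infinitely near points with a well-behaved proximity graph, so its cohomology must be estimated directly, and no known uniform choice of lines and degenerations handles all proximity patterns at once. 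Completing the plan would require exactly such a uniform choice, which is what is presently missing.
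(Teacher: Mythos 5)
You have correctly identified that the statement is not a theorem of the paper at all: it is Conjecture~\ref{conj1}, the Greuel--Lossen--Shustin Conjecture, which the authors attribute to \cite{greuel} and treat throughout purely as a \emph{hypothesis}. The paper contains no proof of it, and indeed the main results (Theorems A--D, and the implication Conjecture~\ref{conj1} $\Rightarrow$ Conjecture~\ref{conj2}) are all conditional on or independent of it. So there is no proof in the paper to compare your attempt against.

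That said, a few remarks on your sketch. Your first reduction is sound and is the standard one: $h^0$ is upper semicontinuous on the parameter space of weighted configurations with fixed proximity graph (this is precisely the kind of semicontinuity the paper itself exploits in Proposition~\ref{yyy} and Corollary~\ref{uuu}), and the expected dimension is always a lower bound, so it suffices to exhibit one configuration with the expected $h^0$. Your proposed Horace-type induction via the residual exact sequence, supplemented by Ciliberto--Miranda degenerations when several multiplicities are comparable to $d$, is the mainstream line of attack on conjectures of Harbourne--Hirschowitz/Segre/Gimigliano/Nagata type, and the role you assign to the hypothesis $d>\mu_1+\mu_2+\mu_3$ (keeping the residual data in range and excluding $(-1)$-curve obstructions) is the right heuristic. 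You also correctly locate the genuine obstacle: when $\mathcal{C}$ carries infinitely near points with long satellite chains, the residual scheme $\operatorname{Res}_L Z$ need not be of the same type (a cluster with a well-behaved proximity structure), and there is no known uniform choice of specializations and degenerations that handles all proximity patterns. This is exactly why the conjecture remains open. Since you were explicitly not claiming to close that gap but only to outline a plan, your write-up is honest and accurate; just be sure, when reading the paper, not to mistake the conjecture for something it establishes.
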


Our purpose now is to prove that the above conjecture implies the following one for very general valuations:



\begin{conj}\label{conj2}
Assume that $k=\mathbb{C}$. If $\nu$ is a very general divisorial or irrational valuation such that $\left[{\rm vol}^N(\nu)\right]^{-1}\geq 9$,  then $\nu$ is minimal.
\end{conj}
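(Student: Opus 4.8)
The plan is to show that Conjecture~\ref{conj1} (Greuel--Lossen--Shustin) implies Conjecture~\ref{conj2} by transferring, via the dictionary between valuations and infinitely near points, an upper bound for $\mu_d^N(\nu)$ into the lower bound $\hat\mu^N(\nu)\le\sqrt{[{\rm vol}^N(\nu)]^{-1}}$, which together with the always-valid inequality~\eqref{MIN} forces equality, i.e.\ minimality. By Corollary~\ref{uuu} it suffices to produce \emph{one} valuation $\nu$ in ${\mathcal V}_\Gamma$ realizing the bound, since $\hat\mu^N$ attains its minimum on very general valuations; and by Proposition~\ref{proposiciondos} the function $t\mapsto\hat\mu^N(\nu_t)$ is continuous on $\Delta_\delta$, so it is enough to establish minimality on a dense subset of the relevant range of normalized volumes --- e.g.\ for divisorial $\nu$ whose defining divisor corresponds to a satellite vertex, for which $[{\rm vol}^N(\nu)]^{-1}$ runs over a dense set of rationals in $[9,\infty)$ by Lemma~\ref{lemauno}. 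This reduces everything to a statement about linear systems of plane curves with prescribed multiplicities at the infinitely near points of ${\mathcal C}_\nu$.

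The key computation is as follows. Fix a divisorial $\nu$ with configuration ${\mathcal C}_\nu=\{p_i\}_{i=1}^s$, sequence of values $\{m_i\}$, and $\betabarra_{g+1}=\sum m_i^2$. Choose a positive integer $d$ and consider the weighted configuration $({\mathcal C}_\nu,\mathbf r)$ with $r_{p_i}=\lfloor \lambda m_i\rfloor$ for a scaling parameter $\lambda\approx d/\sqrt{\betabarra_{g+1}}$ (after passing to $\nu^N$, $\lambda\approx d\sqrt{{\rm vol}^N(\nu)}$); one checks this is consistent because the $m_i$ satisfy the proximity equalities $m_i=\sum_{p_j\to p_i}m_j$ and flooring preserves the inequalities when $d$ is large (this is where some care with the floor functions is needed, possibly shaving off $O(1)$ from the last few multiplicities). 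The expected dimension is $\binom{d+2}{2}-\sum\frac{r_{p_i}(r_{p_i}+1)}{2}$; using $\sum r_{p_i}^2\approx\lambda^2\sum m_i^2=\lambda^2\betabarra_{g+1}\approx d^2$, the condition $[{\rm vol}^N(\nu)]^{-1}\ge 9$, i.e.\ $\betabarra_{g+1}\ge 9\betabarra_0^2$ after normalizing $\betabarra_0=1$, is exactly what makes the Greuel--Lossen--Shustin inequality on $d$ (namely $d$ exceeds the sum of the three largest multiplicities) compatible with the linear system being \emph{empty} for $\lambda$ slightly above the critical value $d/\sqrt{\betabarra_{g+1}}$. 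Emptiness of the general such system means no degree-$d$ curve passes through ${\mathcal C}_\nu$ with multiplicities $\ge r_{p_i}$, which via~\eqref{AAA} translates into $\mu_d(\nu)<\sum m_i r_{p_i}\approx\lambda\sum m_i^2=\lambda\betabarra_{g+1}$, giving $\hat\mu(\nu)=\lim\mu_d(\nu)/d\le\sqrt{\betabarra_{g+1}}=\sqrt{[{\rm vol}(\nu)]^{-1}}$ in the limit $d\to\infty$.

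Carrying this out in order: (1) normalize so $\betabarra_0=1$ and reduce via Corollary~\ref{uuu} and Proposition~\ref{proposiciondos} to a dense family of divisorial $\nu$ with satellite defining divisor and $[{\rm vol}^N(\nu)]^{-1}$ a rational $\ge 9$; (2) for such $\nu$, given small $\epsilon>0$, choose $d$ and $\lambda=(1-\epsilon)^{-1}d/\sqrt{\betabarra_{g+1}}$ (or the reciprocal direction, depending on which inequality is wanted) and set $r_{p_i}=\lfloor\lambda m_i\rfloor$; (3) verify consistency and the hypothesis $d>r_{p_1}+r_{p_2}+r_{p_3}$ (the three largest multiplicities are at the first blow-ups), using that $\betabarra_{g+1}\ge 9$ guarantees $\lambda m_1\le d/3$ asymptotically since $m_1=m_1$ and $m_1^2\le\betabarra_{g+1}$ forces $\lambda m_1\le(1-\epsilon)^{-1}d/3$; (4) invoke Conjecture~\ref{conj1} to get $h^0=0$, hence no curve of degree $\le d$ through ${\mathcal C}_\nu$ with the prescribed multiplicities, hence $\mu_d(\nu)<\sum_i m_i r_{p_i}$; (5) divide by $d$, let $d\to\infty$ then $\epsilon\to 0$ to obtain $\hat\mu^N(\nu)\le\sqrt{[{\rm vol}^N(\nu)]^{-1}}$; (6) combine with~\eqref{MIN} to conclude minimality; (7) extend to all of $[9,\infty)$ by continuity (Proposition~\ref{proposiciondos}) and to irrational $\nu$ by Proposition~\ref{limit}.

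The main obstacle I anticipate is step~(3)--(4): getting the arithmetic of the floor functions and the degree bound $d>r_{p_1}+r_{p_2}+r_{p_3}$ to line up precisely with the threshold $[{\rm vol}^N(\nu)]^{-1}\ge 9$, and ensuring the general weighted configuration in Conjecture~\ref{conj1} (which allows \emph{any} configuration with the given proximities) actually specializes to --- or at least bounds from above the dimension for --- the \emph{particular} configuration ${\mathcal C}_\nu$ attached to a very general valuation. The semicontinuity of $\mu_d$ from Proposition~\ref{yyy} is the tool that bridges ``general configuration with these proximities'' and ``very general valuation with dual graph $\Gamma$'': a very general valuation has $\mu_d$ no larger than that of a general one, and a general valuation's configuration is general among those with the prescribed proximities, so the Greuel--Lossen--Shustin bound applies. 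Making this comparison airtight --- in particular that the countably many open conditions can be chosen uniformly over the (infinitely many) values of $d$ --- is the delicate point, and is presumably where the bulk of the technical work in the actual proof lies.
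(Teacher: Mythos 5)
Your overall strategy --- reduce via Proposition~\ref{proposiciondos} and Corollary~\ref{uuu} to a dense family with $[{\rm vol}^N(\nu)]^{-1}>9$, invoke Conjecture~\ref{conj1} to kill $h^0$ of an appropriate twisted ideal sheaf, and convert that into a bound $\hat\mu(\nu)\le\sqrt{\betabarra_{g+1}}$ --- is the same one the paper uses. However, there is a genuine gap in your central step. You take weights $r_{p_i}=\lfloor\lambda m_i\rfloor$ for a real parameter $\lambda$, get $h^0\bigl(\gp^2,\co_{\gp^2}(d)\otimes{\mathcal H}_{({\mathcal C}_\nu,\mathbf r)}\bigr)=0$, and then claim this gives $\mu_d(\nu)<\sum_i m_ir_{p_i}$ ``via~\eqref{AAA}.'' That inference is false. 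Vanishing of $h^0$ says only that no degree-$\le d$ curve $f$ has $\mathrm{mult}_{p_i}(f)\ge r_{p_i}$ \emph{simultaneously for all} $i$; but a curve with a multiplicity vector $(s_i)$ that fails to dominate $(r_{p_i})$ in one coordinate can still satisfy $\nu(f)=\sum m_is_i\ge\sum m_ir_{p_i}$. Put differently, $(\ref{AAA})$ gives the inclusion ${\mathcal H}_{({\mathcal C}_\nu,\mathbf r)}\subseteq{\mathcal P}_{\sum m_ir_{p_i}}$, but you need the reverse inclusion to pass from $h^0({\mathcal H})=0$ to $h^0({\mathcal P})=0$ (and then to Lemma~\ref{lll2}). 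For a general $\mathbf r$ that reverse inclusion fails.

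The paper sidesteps this by taking $r_{p_i}=km_i$ for an \emph{integer} $k$ and $\alpha_k=k\betabarra_{g+1}$, so that $\mathcal{P}_{\alpha_k}=\mathcal{H}_{\mathcal{K}_{\nu,k}}$ holds as an \emph{equality} (this is \cite[Lemma 2.5]{d-h-k-r-s}); the degree is then $d_k=\lfloor k\sqrt{\betabarra_{g+1}}\rfloor$, and $d_k>3k\betabarra_0$ is exactly where $[{\rm vol}^N(\nu)]^{-1}>9$ enters. To make your argument airtight, do not fix $d$ and then pick $\lambda$ with floors: fix the integer $k$, take $r_{p_i}=km_i$, and choose the degree $d_k$; consistency of the weighted configuration is then automatic because the proximity equalities for the $m_i$ are preserved under scaling by $k$, and Lemma~\ref{lll2} applies because the ideal really is the valuation ideal. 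Note also that once you are in that setting, your instinct to derive the contradiction directly is sound --- the paper's own route through almost P-sufficiency and Lemma~\ref{lll1} is not needed, since $\sum m_i\ge\betabarra_{g+1}/\betabarra_0>3\sqrt{\betabarra_{g+1}}$ already contradicts the inequality $\sum m_i\le 3\sqrt{\betabarra_{g+1}}$ that the GLS conjecture would force if $h^0\ge 1$. One smaller point: there is no reason to restrict to divisorial $\nu$ defined by a satellite divisor in the density argument; the continuity in Proposition~\ref{proposiciondos} and the reduction to divisorial $\nu$ via Proposition~\ref{limit} already suffice.
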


\begin{rem}\label{quasi}
{\rm In \cite{d-h-k-r-s} it is proved that Conjecture \ref{conj1} implies the same statement as in Conjeture \ref{conj2} but, only, for what the authors call \emph{quasi-monomial valuations}; in our terminology, these are those valuations $\nu$ such that: either $\nu$ is irrational and the number $g+2$ of its Puiseux exponents is $2$, or it is divisorial and either $g+2=2$, or $g+2=3$ and $\nu$ is defined by a satellite divisor (notice that this notion of quasi-monomial valuations does not coincide with the one given in \cite{fj}).
}
\end{rem}


Given a finite configuration ${\mathcal C}=\{p_i\}_{i=1}^s$ as above, consider the sequence of blowing-ups $\pi:X\rightarrow \mathbb{P}^2$ centered at the points of $\mathcal C$ and denote by $E_i^*$ the total transform on $X$ of the exceptional divisor $E_i$ of the blowing-up centered at $p_i$. For each  $i$, $1 \leq i \leq s $, let $D_i:=\sum_{j=1}^s m_{ij} E_j^*$ be the divisor on $X$ given by the sequence of values $\{m_{ij}\}_{j=1}^i$ of the valuation defined by the divisor $E_i$ and $m_{ij}=0$ whenever $j>i$. Consider also the $s\times s$-matrix $G_{\mathcal C}=(g_{ij})$, defined by $g_{ij}=-9D_i\cdot D_j-(K_X\cdot D_i)(K_X\cdot D_j)$ for all $i,j\in \{1, 2, \ldots,s\}$, where $K_X$ denotes the canonical divisor.

\begin{de}
{\rm
A finite configuration $\mathcal C$ is \emph{P-sufficient} if $\mathbf{x} G_{\mathcal C} \mathbf{x}^t>0$ for all $\mathbf{x} \in \mathbb{R}^s\setminus \{0\}$ with non-negative coordinates.
}
\end{de}

The cone of curves of a surface $X$ obtained by blowing-up at the points of a P-sufficient configuration is (finite) polyhedral \cite[Theorem 2]{hel}. For proving that Conjecture \ref{conj1} implies Conjecture \ref{conj2}, we will use the following weaker concept.

\begin{de}
{\rm
A finite configuration $\mathcal C$ as above is \emph{almost P-sufficient} if $\mathbf{x} G_{\mathcal C} \mathbf{x}^t\geq 0$ for all $\mathbf{x} \in \mathbb{R}^s$ with non-negative coordinates.
}
\end{de}

An unibranch finite configuration $\mathcal C$ is P-sufficient if and only if the $(s,s)$-entry of the matrix $G_{\mathcal C}$ is strictly positive. That is to say, if and only if $9\sum_{j=1}^s m_j-(\sum_{j=1}^s m_j)^2>0$, where, for simplicity, we have set $m_j = m_{js}$. This fact was proved in \cite[Lemma 2]{hel} and an analogous proof allows us to show that, $\mathcal C$ unibranch is almost P-sufficient if and only if $9\sum_{j=1}^s m_j-(\sum_{j=1}^s m_j)^2\geq 0$.


\begin{lem}\label{lll1}
Assume that $k=\mathbb{C}$. Let $\nu$ be a very general divisorial valuation, let $\{\betabarra_i\}_{i=0}^{g+1}$ be its maximal contact values and assume that the configuration ${\mathcal C}_{\nu}$ is almost P-sufficient. If Conjecture \ref{conj1} holds then $\hat{\mu}(\nu)\leq 3\betabarra_0$.
\end{lem}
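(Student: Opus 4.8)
The plan is to turn the bound on $\hat{\mu}(\nu)$ into an inequality for individual plane curves and then to split the curves into two ranges, handling one with Conjecture \ref{conj1} and the other with the almost P-sufficiency hypothesis. Since $\mu_{\bullet}(\nu)$ is superadditive, $\hat{\mu}(\nu)=\sup_{d\geq 1}\mu_d(\nu)/d=\sup_{f}\nu(f)/\deg f$; as $\nu$ and $\deg$ are additive on the irreducible factors of $f$, it suffices to prove $\nu(C)\leq 3\betabarra_0\deg C$ for every integral plane curve $C$. Fix such a $C$, put $d=\deg C$ and $e_i=\mult_{p_i}(C)$ for the $p_i\in{\mathcal C}_{\nu}$ (a consistent vector). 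On the surface $\pi\colon X\to\mathbb{P}^2$ blowing up ${\mathcal C}_{\nu}$, with $H$ the pull-back of a line and $D=\sum_i m_i E_i^*$, one has $\nu(C)=D\cdot\widetilde{C}$, $d=H\cdot\widetilde{C}$ and $\sum_i m_i^2=\betabarra_{g+1}$, so the goal reads $(3\betabarra_0 H-D)\cdot\widetilde{C}\geq 0$. Observe that $3\betabarra_0 H-D=-\betabarra_0 K_X+\sum_i(\betabarra_0-m_i)E_i^*$, a twist of the anticanonical class by an effective exceptional divisor, because the sequence of values is non-increasing and hence $m_i\leq m_1=\betabarra_0$; in particular $3\betabarra_0 H-D$ already pairs non-negatively with each $E_i$, thanks to the proximity inequalities $m_i\geq\sum_{p_j\to p_i}m_j$.

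For the ``generic'' curves --- those with $d$ bigger than the sum of the three largest $e_i$ --- I would use Conjecture \ref{conj1}. Since $\nu$ is very general, ${\mathcal C}_{\nu}$ is general among configurations with the given proximities, and $C$ furnishes a non-zero section of $\co_{\mathbb{P}^2}(d)\otimes{\mathcal H}_{({\mathcal C}_{\nu},(e_i))}$; so Conjecture \ref{conj1} forces $(d+1)(d+2)>\sum_i e_i(e_i+1)$, whence $\sum_i e_i^2\leq(d+1)(d+2)$. By Cauchy--Schwarz, $\nu(C)=\sum_i m_i e_i\leq\sqrt{\betabarra_{g+1}}\,\sqrt{(d+1)(d+2)}$. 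Now the almost P-sufficiency of ${\mathcal C}_{\nu}$ amounts (cf. \cite[Lemma 2]{hel}) to the non-negativity of the $(s,s)$-entry $-9D_s^2-(K_X\cdot D_s)^2$ of $G_{{\mathcal C}_{\nu}}$; since $D_s^2=-\betabarra_{g+1}$ and $K_X\cdot D_s=-\sum_j m_j$, this reads $9\betabarra_{g+1}\geq\left(\sum_j m_j\right)^2$, which together with $\betabarra_{g+1}=\sum_j m_j^2\leq m_1\sum_j m_j$ gives $\sum_j m_j\leq 9m_1$ and hence $\betabarra_{g+1}\leq 9\betabarra_0^2$, i.e. $[{\rm vol}^N(\nu)]^{-1}\leq 9$. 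Plugging this in, and using the constraint that the three largest $e_i$ sum to less than $d$ (which keeps the $e_i$ from being simultaneously proportional to $(m_i)$ and of the extremal size $\sum e_i^2\approx(d+1)(d+2)$), one gets $\nu(C)\leq 3\betabarra_0 d$ on this range.

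For the remaining curves --- those with $d$ at most the sum of the three largest $e_i$, i.e. the potential curves of negative self-intersection on $X$ --- I would argue via the cone of curves of $X$. Here Conjecture \ref{conj1}, applied to the sub-configurations of ${\mathcal C}_{\nu}$, guarantees that $X$ behaves like an (almost) del Pezzo surface even when ${\mathcal C}_{\nu}$ has more than eight points: every irreducible curve on $X$ of negative self-intersection is a $(-1)$- or $(-2)$-curve. Combined with the almost P-sufficiency condition $\mathbf{x}\,G_{{\mathcal C}_{\nu}}\mathbf{x}^t\geq 0$ and the computations of \cite{hel}, this determines $\overline{\mathrm{NE}}(X)$ closely enough to reduce the nefness of $3\betabarra_0 H-D$ to checking its non-negative pairing with: the $E_i$ (done, by proximity); each irreducible curve $\Gamma$ of negative self-intersection (on which $-K_X\cdot\Gamma\in\{0,1\}$ and $E_i^*\cdot\Gamma\geq 0$, so the pairing is $\geq 0$ by $3\betabarra_0 H-D=-\betabarra_0 K_X+\sum(\betabarra_0-m_i)E_i^*$); and the remaining extremal rays, which are nef. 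Since the two ranges cover all integral plane curves, $\hat{\mu}(\nu)\leq 3\betabarra_0$ follows.

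The main obstacle is the junction of the two ranges. Conjecture \ref{conj1} controls only systems with $d$ above the sum of the three largest multiplicities --- exactly the regime in which $\nu(C)/\deg C$ cannot exceed, essentially, $\sqrt{\betabarra_{g+1}}\leq 3\betabarra_0$ --- while it is the low-degree curves, those with $d$ at most $e_{(1)}+e_{(2)}+e_{(3)}$ (the negative curves of $X$), that could push the quotient up and must be tamed by almost P-sufficiency. At the threshold $[{\rm vol}^N(\nu)]^{-1}=9$ the two bounds barely meet, leaving no room for the $O(d)$ slack in the Cauchy--Schwarz estimate, and it is precisely there that the sharp inequality $9\betabarra_{g+1}\geq\left(\sum_j m_j\right)^2$ --- the non-negativity of the relevant entry of $G_{{\mathcal C}_{\nu}}$ --- must be exploited in full; this is the delicate step.
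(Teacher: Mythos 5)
Your core computation reaches the same decisive inequality $\nu(f)^2\le 9\bar{\beta}_0^2\sum r_i^2$ that the paper uses, just by a slightly different route: you derive $\bar{\beta}_{g+1}\le 9\bar{\beta}_0^2$ from the $(s,s)$--entry of $G_{\mathcal{C}_\nu}$ and then apply Cauchy--Schwarz, whereas the paper uses $m_i\le\bar{\beta}_0$ together with the inequality $9\sum r_i^2\ge(\sum r_i)^2$, which follows from almost P-sufficiency applied to the multiplicity vector $(r_i)$ of $C_f$ rather than to $(m_i)$. Both routes then feed $\sum r_i^2\le(d+1)(d+2)$ from Conjecture~\ref{conj1}. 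Had you stopped here with $\mu_d(\nu)\le 3\bar{\beta}_0\sqrt{(d+1)(d+2)}$ and let $d\to\infty$, you would essentially have the paper's proof.

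The genuine gap lies in your insistence on the pointwise bound $\nu(C)\le 3\bar{\beta}_0\deg C$ for every integral $C$. This reduction via Fekete's lemma is logically valid, but it forces you into a target that the available inputs only reach with an $O(1)$ excess: Cauchy--Schwarz with $\sum e_i^2\le(d+1)(d+2)$ gives $3\bar{\beta}_0\sqrt{(d+1)(d+2)}\approx 3\bar{\beta}_0(d+\tfrac32)$, not $3\bar{\beta}_0 d$, and your appeal to ``the $e_i$ not being simultaneously proportional to $(m_i)$ and extremal'' is not an argument. You acknowledge this yourself, but do not close it; the paper avoids the issue entirely by working asymptotically. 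Separately, your treatment of the low-degree regime (where the degree hypothesis of Conjecture~\ref{conj1} fails) is not a proof: the claim that Conjecture~\ref{conj1} forces every negative irreducible curve on $X$ to be a $(-1)$- or $(-2)$-curve for arbitrary configurations is not a consequence of that conjecture, and nefness of $3\bar{\beta}_0 H-D$ on the whole of $\overline{\mathrm{NE}}(X)$ is a much stronger assertion than what is required. Your observation that $3\bar{\beta}_0 H-D=-\bar{\beta}_0 K_X+\sum(\bar{\beta}_0-m_i)E_i^*$ pairs non-negatively with each $E_i$ via the proximity inequalities is correct and pleasant, but it does not by itself control the relevant irreducible curves. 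In short, the first half of your argument is in the spirit of the paper, but the overall proposal has a real gap in the low-degree/junction regime that the paper simply does not need to face.
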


\begin{proof}
Suppose that ${\mathcal C}_{\nu}=\{p_i\}_{i=1}^s$ and let $\{m_i\}_{i=1}^s$ be the sequence of values of $\nu$. Let $d$ be a large enough positive integer. Take a polynomial $f(u,v)$ of degree  $d$ such that $f(0,0)=0$ and denote by $C_f$ the projective curve of $\gp^2$ of degree $d$ defined by $f$. For each $i\in \{1, 2, \ldots,s\}$, let $r_i$ be the multiplicity of the strict transform of $C_f$ at $p_i$.

Let $X_{\nu}$ be the surface obtained after blowing-up the points in ${\mathcal C}_{\nu}$. Denote by $\tilde{C}_f$ the strict transform of $C_f$ in $X_{\nu}$, then $\tilde{C}_f^2=d^2-\sum_{i=1}^s r_i^2$. Since ${\mathcal C}_{\nu}$ is almost P-sufficient, reasoning as in the proof of \cite[Lemma 2]{gm}, we deduce that $9\sum_{i=1}^s r_i^2-(\sum_{i=1}^s r_i)^2\geq 0$, and so
$$9(d^2-\tilde{C}_f^2)\geq \left(\sum_{i=1}^s r_i\right)^2.$$
Then
\begin{equation}\label{qq}
\left[ \nu(f)\right]^2=\left(\sum_{i=1}^s m_i r_i\right)^2\leq \betabarra_0^2\left(\sum_{i=1}^s r_i\right)^2\leq 9\betabarra_0^2 \left(d^2-\tilde{C}_f^2\right).
\end{equation}
By Conjecture \ref{conj1} we have that
$$\frac{(d+1)(d+2)}{2}-\sum_{i=1}^s \frac{r_i(r_i+1)}{2}\geq 1,$$
that is equivalent to
$$\tilde{C}_f^2\geq \sum_{i=1}^s r_i-3d.$$
In particular, $\tilde{C}_f^2\geq -3d$ and, then, by (\ref{qq}):
$$\left[\nu(f)\right]^2\leq 9\betabarra_0^2(d^2+3d).$$
This implies that $\mu_d(\nu)\leq 3\betabarra_0\sqrt{d^2+3d}$ (for every positive integer $d$) and therefore $\hat{\mu}(\nu)\leq 3\betabarra_0$.
\end{proof}

We will also use the following straightforward result.
\begin{lem}\label{lll2}
Let $\nu$ be a divisorial valuation, and  $d$ and $\alpha$  positive integers such that the cohomology $H^0(\gp^2,\co_{\gp^2}(d)\otimes {\mathcal P}_{\alpha})$ vanishes. Then,  $\mu_d(\nu)<\alpha$.
\end{lem}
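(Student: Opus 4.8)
The plan is to prove the contrapositive: I will show that $\mu_d(\nu)\geq \alpha$ forces $H^0(\mathbb{P}^2,\mathcal{O}_{\mathbb{P}^2}(d)\otimes \mathcal{P}_\alpha)\neq 0$. The bridge between the affine quantity $\mu_d(\nu)$ and the global cohomology group is the standard dictionary recalled implicitly throughout the paper: using the local coordinates $u=Y/X$, $v=Z/X$ at $p=(1:0:0)$, a polynomial $f\in k[u,v]$ of degree at most $d$ homogenizes (padding with powers of $X$ when $\deg f<d$) to a form $F$ of degree exactly $d$, hence to a global section $s$ of $\mathcal{O}_{\mathbb{P}^2}(d)$, and under this correspondence the germ of $s$ at $p$ is literally $f$ regarded as an element of $R=\mathcal{O}_{\mathbb{P}^2,p}$.

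First I would record that $\mathcal{P}_\alpha=\{g\in R\mid \nu(g)\geq \alpha\}\cup\{0\}$ is $\mathfrak{m}$-primary, since $\nu$ is centered at $R$ and $\alpha>0$; therefore it defines an ideal sheaf on $\mathbb{P}^2$ that is cosupported at $p$ and equals the structure sheaf at every other point (this is the same convention under which $\mathcal{P}_\alpha=\mathcal{H}_{(\mathcal{C}_\nu,\mathbf{r})}$ for a suitable weighting, consistent with the $h^0$ formula stated earlier). Consequently a nonzero element of $H^0(\mathbb{P}^2,\mathcal{O}_{\mathbb{P}^2}(d)\otimes \mathcal{P}_\alpha)$ is nothing but a degree-$d$ form whose germ at $p$ lies in the valuation ideal $\mathcal{P}_\alpha$, i.e.\ whose local equation $f$ at $p$ satisfies $\nu(f)\geq\alpha$.

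Then, assuming $\mu_d(\nu)\geq \alpha$, I pick $f\in k[u,v]$ with $\deg f\leq d$ and $\nu(f)\geq\alpha$. Since $\alpha>0$, $f$ cannot be a unit in $R$, so it genuinely vanishes at $p$; homogenizing $f$ produces a nonzero section $s$ of $\mathcal{O}_{\mathbb{P}^2}(d)$ whose germ at $p$ is $f\in\mathcal{P}_\alpha$, and hence (trivially away from $p$, and by the above at $p$) $s$ is a nonzero global section of $\mathcal{O}_{\mathbb{P}^2}(d)\otimes\mathcal{P}_\alpha$. This contradicts the hypothesis, so $\mu_d(\nu)<\alpha$.

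I do not expect any genuine obstacle: as the paper indicates, the statement is essentially a bookkeeping matter. The only points requiring a line of care are interpreting $\mathcal{P}_\alpha$ as an ideal sheaf on $\mathbb{P}^2$ (the $\mathfrak{m}$-primary/finite-colength property being what makes this harmless), and checking that the homogenization lands in degree exactly $d$ with germ $f$ at $p$, which is immediate from $u=Y/X$, $v=Z/X$.
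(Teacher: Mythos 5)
Your proof is correct and follows essentially the same route as the paper's: argue by contradiction (or contrapositive), take $f\in k[u,v]$ of degree at most $d$ with $\nu(f)\geq\alpha$, and observe that homogenizing $f$ yields a nonzero global section of $\mathcal{O}_{\mathbb{P}^2}(d)\otimes\mathcal{P}_\alpha$. You merely spell out the dictionary between low-degree polynomials in the affine chart and global sections of $\mathcal{O}_{\mathbb{P}^2}(d)$ twisted by an $\mathfrak{m}$-primary ideal sheaf, which the paper leaves implicit.
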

\begin{proof}
On the contrary, suppose that $\mu_d(\nu)\geq \alpha$. Then there exists a polynomial $f(x,y)$ of degree less than or equal to $d$ such that $\nu(f)\geq \alpha$. But this defines a non-zero global section of $\co_{\gp^2}(d)\otimes {\mathcal P}_{\alpha}$, a contradiction.
\end{proof}

\begin{teo}
Conjecture \ref{conj1} implies Conjecture \ref{conj2}.
\end{teo}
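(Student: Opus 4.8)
The plan is to deduce from the Greuel--Lossen--Shustin bound (Conjecture~\ref{conj1}), together with the Cauchy--Schwarz inequality and the continuity machinery of Section~\ref{metric}, that $\hat{\mu}(\nu)\le\sqrt{[{\rm vol}(\nu)]^{-1}}$ for every very general divisorial valuation $\nu$ with $[{\rm vol}^{N}(\nu)]^{-1}\ge 9$; since the reverse inequality always holds, this is exactly minimality, and the irrational case and the boundary value $[{\rm vol}^{N}(\nu)]^{-1}=9$ will be recovered by a limiting argument. For the main estimate, let $\nu$ be very general divisorial, with configuration ${\mathcal C}_{\nu}=\{p_{i}\}_{i=1}^{s}$ and sequence of values $\{m_{i}\}_{i=1}^{s}$, so that $[{\rm vol}(\nu)]^{-1}=\sum_{i}m_{i}^{2}$ and $m_{1}=\nu(\mathfrak{m})=\max_{i}m_{i}$. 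For $d\gg 0$ choose $f\in k[u,v]$ of degree $\le d$ with $\nu(f)=\mu_{d}(\nu)$, let $C_{f}\subset\gp^{2}$ be the associated curve and $\rho_{i}={\rm mult}_{p_{i}}(C_{f})$, so that $\widetilde{C_{f}}^{\,2}=d^{2}-\sum_{i}\rho_{i}^{2}$ on the blow-up $X_{\nu}$. By (\ref{AAA}) and Cauchy--Schwarz,
\[
\mu_{d}(\nu)^{2}=\Big(\sum_{i}m_{i}\rho_{i}\Big)^{2}\le\Big(\sum_{i}m_{i}^{2}\Big)\Big(\sum_{i}\rho_{i}^{2}\Big)=[{\rm vol}(\nu)]^{-1}\big(d^{2}-\widetilde{C_{f}}^{\,2}\big).
\]
Since $\nu$ is very general, ${\mathcal C}_{\nu}$ is general among configurations with its proximities and $({\mathcal C}_{\nu},(\rho_{i}))$ is consistent; assuming $d$ is larger than the sum of the three largest $\rho_{i}$, Conjecture~\ref{conj1} applies and, as $f$ gives a nonzero section of $\co_{\gp^{2}}(d)\otimes{\mathcal H}_{({\mathcal C}_{\nu},(\rho_{i}))}$, forces $\tfrac{(d+1)(d+2)}{2}-\sum_{i}\tfrac{\rho_{i}(\rho_{i}+1)}{2}\ge 1$, i.e.\ $\widetilde{C_{f}}^{\,2}\ge\sum_{i}\rho_{i}-3d\ge -3d$. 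Substituting and letting $d\to\infty$ yields $\hat{\mu}(\nu)\le\sqrt{[{\rm vol}(\nu)]^{-1}}$, hence minimality.

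It remains to justify the degree hypothesis $d>\rho_{i_{1}}+\rho_{i_{2}}+\rho_{i_{3}}$ for the three largest multiplicities of $C_{f}$, and this is precisely where the bound $[{\rm vol}^{N}(\nu)]^{-1}\ge 9$ enters. The idea is that a degree-$d$ curve whose $\nu$-value is close to $\sqrt{[{\rm vol}(\nu)]^{-1}}\,d$ cannot carry a large multiplicity on the initial part of ${\mathcal C}_{\nu}$: writing $\nu(f)=\rho_{1}m_{1}+\cdots$, using the genus bound $\sum_{i}\binom{\rho_{i}}{2}\le\binom{d-1}{2}$ for an irreducible $C_{f}$ (and factoring into irreducible components otherwise), and bootstrapping on $d$ from the trivial estimate for small $d$, one gets $\rho_{i}=O\!\big(d\,m_{i}/\sqrt{[{\rm vol}(\nu)]^{-1}}\big)$; since $m_{1}=\max_{i}m_{i}$, the three largest multiplicities sum to at most, asymptotically, $3d\,m_{1}/\sqrt{[{\rm vol}(\nu)]^{-1}}=3d/\sqrt{[{\rm vol}^{N}(\nu)]^{-1}}$, which is $<d$ exactly when $[{\rm vol}^{N}(\nu)]^{-1}>9$. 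For configurations that are almost $P$-sufficient --- for unibranch ${\mathcal C}_{\nu}$ these are the ones with $\sum_{i}m_{i}\le 9$, hence $[{\rm vol}^{N}(\nu)]^{-1}\le 9$ --- the estimate $\hat{\mu}(\nu)\le 3\nu(\mathfrak{m})$ of Lemma~\ref{lll1} gives the conclusion directly, while Lemma~\ref{lll2} is the cohomological reformulation of $\mu_{d}$ underlying the whole argument.

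Finally, to pass to irrational valuations and to the boundary $[{\rm vol}^{N}(\nu)]^{-1}=9$, fix a curve valuation $\delta$ with $\Delta_{\delta}\cap[9,\infty)\ne\emptyset$ and consider $V_{\delta}=\{\nu_{t}\}_{t\in\Delta_{\delta}}$. By Lemma~\ref{lemauno}, $t$ is an affine function of $\beta'_{g+1}$ on $V_{\delta}$, so the set of $t\in\Delta_{\delta}$ with $\nu_{t}$ divisorial is dense; by the estimate above every divisorial $\nu_{t}\in V_{\delta}$ with $t>9$ is minimal, i.e.\ $\hat{\mu}^{N}(\nu_{t})=\sqrt{t}$ on a dense subset of $[9,\infty)\cap\Delta_{\delta}$. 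Since $t\mapsto\hat{\mu}^{N}(\nu_{t})$ is continuous (Proposition~\ref{proposiciondos}) and $\hat{\mu}^{N}(\nu_{t})\ge\sqrt{t}$ throughout, the equality $\hat{\mu}^{N}(\nu_{t})=\sqrt{t}$ holds on all of $[9,\infty)\cap\Delta_{\delta}$; in particular all irrational $\nu_{t}\in V_{\delta}$, and all divisorial ones with $t=9$, are minimal (for the irrational case one may alternatively invoke Proposition~\ref{limit}). Together with Corollary~\ref{uuu}, which reduces everything to very general valuations, this proves that Conjecture~\ref{conj1} implies Conjecture~\ref{conj2}. The genuine obstacle in this scheme is the degree bound of the second paragraph: ruling out the ``degenerate'' near-optimal curves so that Conjecture~\ref{conj1} becomes applicable is what forces the threshold $9$, everything else being a soft consequence of Section~\ref{metric}.
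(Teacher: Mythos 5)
Your overall strategy --- Cauchy--Schwarz applied to $\nu(f)=\sum_i m_i\rho_i$ plus the Greuel--Lossen--Shustin bound on the expected dimension, with the continuity results of Section~\ref{metric} handling the irrational and boundary cases --- is reasonable in outline, but the step you yourself flag as ``the genuine obstacle'' is a real gap, not a technicality, and it is exactly the point where the paper's proof takes a different route in order to avoid it. To apply Conjecture~\ref{conj1} to the weighted configuration $({\mathcal C}_{\nu},(\rho_i))$ of an extremal curve $C_f$ you need $d$ to exceed the sum of its three largest multiplicities, and nothing forces this: a curve computing $\mu_d(\nu)$ may a priori concentrate multiplicity comparable to $d$ at the first few centers (for instance a high power of a low-degree curve through the initial free points has $\rho_i$ of order $d$ at several points), and your proposed control $\rho_i=O\!\big(d\,m_i/\sqrt{[{\rm vol}(\nu)]^{-1}}\big)$ via a genus bound and ``bootstrapping'' is only a heuristic --- no argument is given, and establishing such a bound for all near-extremal curves is essentially as hard as the statement being proved. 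The paper sidesteps this entirely by never looking at the multiplicities of extremal curves: it applies Conjecture~\ref{conj1} to the valuation's own weighted configuration $({\mathcal C}_{\nu},(km_i)_{i=1}^s)$, whose ideal sheaf is the valuation ideal ${\mathcal P}_{\alpha_k}$ with $\alpha_k=k\betabarra_{g+1}$, in degree $d_k=\lfloor k\sqrt{\betabarra_{g+1}}\rfloor$; there the degree hypothesis is automatic, since the three largest multiplicities sum to at most $3k\betabarra_0<d_k$ precisely because $[{\rm vol}^N(\nu)]^{-1}>9$. Then the argument is a dichotomy: either $h^0(\gp^2,\co_{\gp^2}(d_k)\otimes{\mathcal P}_{\alpha_k})=0$, in which case Lemma~\ref{lll2} gives $\mu_{d_k}(\nu)<\alpha_k$ and hence $\hat{\mu}(\nu)\leq\sqrt{\betabarra_{g+1}}$, i.e.\ minimality; or it is nonzero, in which case Conjecture~\ref{conj1} forces the expected dimension to be positive, which after computation yields $3\sqrt{\betabarra_{g+1}}\geq\sum_i m_i$, i.e.\ ${\mathcal C}_{\nu}$ is almost P-sufficient, and then Lemma~\ref{lll1} gives $\hat{\mu}(\nu)\leq 3\betabarra_0<\sqrt{\betabarra_{g+1}}\leq\hat{\mu}(\nu)$, a contradiction.

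Two smaller inaccuracies: almost P-sufficiency of a unibranch configuration means $9\sum_i m_i^2\geq\big(\sum_i m_i\big)^2$, not $\sum_i m_i\leq 9$, and it does not by itself imply $[{\rm vol}^N(\nu)]^{-1}\leq 9$; in the paper that implication is obtained only by combining Lemma~\ref{lll1} (which again invokes Conjecture~\ref{conj1}) with the universal inequality $\hat{\mu}(\nu)\geq\sqrt{\betabarra_{g+1}}$. Your reduction of the irrational case and of the boundary value $[{\rm vol}^N(\nu)]^{-1}=9$ via Lemma~\ref{lemauno}, Proposition~\ref{proposiciondos} and Proposition~\ref{limit} is essentially the paper's opening step and is fine as far as it goes; but the core estimate for divisorial valuations with $[{\rm vol}^N(\nu)]^{-1}>9$ remains unproved in your scheme, so the proposal does not yet constitute a proof.
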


\begin{proof}

Consider a curve valuation $\delta$ whose first centers are those in $C_\nu$ up to its last free point (which is included) and the remaining centers are free.
By Proposition \ref{proposiciondos}, one can suppose that $\left[{\rm vol}^N(\nu)\right]^{-1} > 9$ and $\nu$ is divisorial.



Let $\{m_i\}_{i=1}^s$ be the sequence of values of $\nu$ and let  $\{\betabarra_i\}_{i=0}^{g+1}$ be its sequence of maximal contact values. For each $k\in \mathbb{N}$, define $d_k:=\lfloor k(\betabarra_{g+1})^{1/2}\rfloor$ and consider the weighted configuration ${\mathcal K}_{\nu,k}:=({\mathcal C}_{\nu}, (k m_i)_{i=1}^s)$.
Notice that $d_k> 3k\betabarra_0$ for $k$ large enough. If $\alpha_k:=k\betabarra_{g+1}$, by \cite[Lemma 2.5]{d-h-k-r-s} one has that ${\mathcal P}_{\alpha_k}=\pi_*\co_{X_{\nu}}(-\sum_{i=1}^s km_iE^*_i)$ and, therefore, ${\mathcal P}_{\alpha_k}={\mathcal H}_{{\mathcal K}_{\nu,k}}$, where $\pi:X_{\nu}\rightarrow \mathbb{P}^2$ is the composition of the blowing-ups centered at the points in ${\mathcal C}_{\nu}$.

Assume that $k$ is large enough. If we prove that $h^0(\gp^2,\co_{\gp^2}(d_k)\otimes {\mathcal P}_{\alpha_k})=0$ then, by Lemma \ref{lll2}, we get that $\mu_{d_k}(\nu)<\alpha_k$ and, therefore,
$$\hat{\mu}(\nu)=\lim_{k\rightarrow \infty} \frac{\mu_{d_k}(\nu)}{d_k}\leq \lim_{k\rightarrow \infty} \frac{\alpha_k}{d_k}=\sqrt{\betabarra_{g+1}},$$
concluding the equality $\hat{\mu}(\nu)=\sqrt{\betabarra_{g+1}}$.

Hence, it only remains to show that the inequality
 $h^0(\gp^2,\co_{\gp^2}(d_k)\otimes {\mathcal P}_{\alpha_k})\geq 1$ cannot hold. Reasoning by contradiction, since $3k\betabarra_0$ is an upper bound of the sum of the three largest multiplicities in $\{k m_i\}_{i=1}^s$, by Conjecture \ref{conj1}, it holds that:
$$\frac{(d_k+1)(d_k+2)}{2}-\sum_{i=1}^s \frac{km_i(km_i+1)}{2}\geq 1.$$
Taking into account that $d_k\leq (\betabarra_{g+1})^{1/2}k$ and that $\sum_{i=1}^s m_i^2=\betabarra_{g+1}$, it is straightforward to check that the left hand side of the above inequality is less than or equal to
$$\frac{1}{2}\left( 3k\sqrt{\betabarra_{g+1}}-k\sum_{i=1}^s m_i+2 \right)$$
and, therefore,
$$3\sqrt{\betabarra_{g+1}}-\sum_{i=1}^s m_i\geq 0,$$
which implies that
$$
9\sum_{i=1}^s m_i^2-\left(\sum_{i=1}^s m_i \right)^2\geq 0.
$$


This shows that ${\mathcal C}_{\nu}$ is an almost P-sufficient configuration. Therefore, since  $\hat{\mu}(\nu)\leq 3\betabarra_0$ by Lemma \ref{lll1}, we get that
$$
\sqrt{\betabarra_{g+1}} \leq \hat{\mu}(\nu)\leq 3\betabarra_0
$$
and, as a consequence, $\left[{\rm vol}^N(\nu)\right]^{-1}\leq 9$, which gives the desired contradiction.
\end{proof}

\section{Families of minimal valuations} \label{VERY}

In this section, we consider a large family of divisorial valuations $\nu$  over the projective plane for which we are able, on the one hand, to compute explicitly the value $\hat{\mu}(\nu)$, and on the other hand, to determine a subset of minimal valuations. These facts, via semicontinuity, give rise to a large family of very general divisorial valuations for which an upper bound of $\hat{\mu}(\nu)$ is provided, and a subset of minimal very general valuations is obtained. Every mentioned family and subfamily contains valuations with arbitrarily many Puiseux exponents.


Throughout this section, we will use the notation as in Section \ref{minimal}: $p=(1:0:0)\in \mathbb{P}^2$ in projective coordinates $(X:Y:Z)$, $(u=Y/X,v=Z/X)$ are local coordinates around $p$ and $(x=X/Z, y=Y/Z)$ are coordinates in the affine chart defined by $Z\neq 0$; also, the line $L$ defined by $Z=0$ will be called the \emph{line at infinity}. Notice that $v=0$ is a local equation of $L$ at $p$.

\begin{de}
\label{NON}
A plane divisorial valuation $\nu$, of the quotient field $K$ of the ring $R=\mathcal{O}_{\mathbb{P}_k^2,p}$ and centered at $R$, is said to be non-positive at infinity whenever $\nu(f) \leq 0$ for any element $f \in k[x,y] \setminus \{0\}$, where $(x,y)$ are affine coordinates in the chart $Z\neq 0$.
\end{de}

\begin{rem}
{\rm Notice that if a plane divisorial valuation $\nu$ as above is non-positive at infinity, then either it is the $\mathfrak{m}$-adic valuation (where $\mathfrak{m}$ is the maximal ideal of $R$) or $\nu(v)>\nu(u)$.

}
\end{rem}

Valuations non-positive at infinity have an easy characterization:

\begin{pro} \label{DeltaD}
Let $\nu$ be a plane divisorial valuation of $K$ centered at $R$ which is not the $\mathfrak{m}$-adic one. Let $\betabarra_{g+1}$ be the last maximal contact value of $\nu$. Then $\nu$ is non-positive at infinity if, and only if, $\nu(v)>\nu(u)$ and the inequality $\nu(v)^2 \geq \betabarra_{g+1}$ holds.
\end{pro}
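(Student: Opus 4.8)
The plan is to reduce the statement to a single inequality for $\hat{\mu}(\nu)$ by passing between the two coordinate systems, and then to prove that inequality in both directions.

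\emph{Step 1 (reformulation).} Since $x=X/Z$, $y=Y/Z$, $u=Y/X$ and $v=Z/X$, one has $x=1/v$, $y=u/v$ and, conversely, $v=1/x$, $u=y/x$; note $\nu(v)>0$ as $v\in\mathfrak{m}$. Given a nonzero $f\in k[x,y]$ of degree $d$, the polynomial $F:=v^{d}\,f(1/v,u/v)$ lies in $k[u,v]\setminus\{0\}$, has $\deg F\le d$ and satisfies $\nu(f)=\nu(F)-d\,\nu(v)$; conversely, for a nonzero $F\in k[u,v]$ of degree $e$ the polynomial $f:=x^{e}\,F(y/x,1/x)\in k[x,y]\setminus\{0\}$ has degree $\le e$ and $\nu(F)=\nu(f)+e\,\nu(v)$. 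Hence $\nu$ is non-positive at infinity if and only if $\nu(F)\le(\deg F)\,\nu(v)$ for every nonzero $F\in k[u,v]$, equivalently $\mu_{d}(\nu)\le d\,\nu(v)$ for all $d\ge1$. As $d\mapsto\mu_{d}(\nu)$ is superadditive (a product of polynomials adds degrees and adds values), Fekete's lemma gives $\mu_{d}(\nu)/d\le\hat{\mu}(\nu)$ for all $d$, so this condition is in turn equivalent to $\hat{\mu}(\nu)\le\nu(v)$. It therefore suffices to prove that $\hat{\mu}(\nu)\le\nu(v)$ holds exactly when $\nu(v)>\nu(u)$ and $\nu(v)^{2}\ge\betabarra_{g+1}$.

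\emph{Step 2 (``only if'').} Assume $\nu$ is non-positive at infinity. By the Remark preceding the statement (recall $\nu$ is not the $\mathfrak{m}$-adic valuation) we get $\nu(v)>\nu(u)$. Moreover, by Step 1, $\hat{\mu}(\nu)\le\nu(v)$, and the inequality recalled in Section~\ref{minimal}, $\hat{\mu}(\nu)\ge\sqrt{[\mathrm{vol}(\nu)]^{-1}}$, together with $\mathrm{vol}(\nu)=1/\betabarra_{g+1}$ (Section~\ref{volumen}), yields $\nu(v)\ge\hat{\mu}(\nu)\ge\sqrt{\betabarra_{g+1}}$, hence $\nu(v)^{2}\ge\betabarra_{g+1}$.

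\emph{Step 3 (``if'').} Assume $\nu(v)>\nu(u)$ and $\nu(v)^{2}\ge\betabarra_{g+1}$; by Step 1 it is enough to show $\nu(F)\le(\deg F)\,\nu(v)$ for every nonzero $F\in k[u,v]$. Using that $\nu$ and $\deg$ are additive under products we may assume $F$ is irreducible, $F(0,0)=0$ and $v\nmid F$, so that the projective curve $C_{F}$ passes through $p$ and does not contain the line at infinity $L=\{v=0\}$. Let $\pi\colon X_{\nu}\to\mathbb{P}^{2}$ be the composition of blowing-ups at the points of $\mathcal{C}_{\nu}=\{p_{i}\}_{i=1}^{s}$, let $\Lambda$ be the pull-back of the class of a line, $E_{i}^{*}$ the total transforms, $\tilde{C}_{F}$ the strict transform of $C_{F}$, and set $D_{\nu}:=\sum_{i=1}^{s}m_{i}E_{i}^{*}$, so that $D_{\nu}\cdot\Lambda=0$ and $D_{\nu}\cdot E_{j}^{*}=-m_{j}$. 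By the Noether-type formula (\ref{AAA}) one has $\nu(F)=D_{\nu}\cdot\tilde{C}_{F}$. Since $\nu(v)>\nu(u)$ and $\nu$ is not $\mathfrak{m}$-adic, the first coefficient of the Hamburger--Noether expansion of $\nu$ with respect to $(u,v)$ vanishes, so $p_{2}$ lies on the strict transform $\tilde{L}$ of $L$; hence $\tilde{L}$ passes through $p_{1},\dots,p_{c}$ for some $c\ge 2$ and, $L$ being smooth, $\tilde{L}\sim\Lambda-\sum_{i=1}^{c}E_{i}^{*}$ with $D_{\nu}\cdot\tilde{L}=\sum_{i=1}^{c}m_{i}=\nu(v)$. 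Now put $A:=\nu(v)\,\Lambda-D_{\nu}$. One computes $A^{2}=\nu(v)^{2}-\sum_{i}m_{i}^{2}=\nu(v)^{2}-\betabarra_{g+1}\ge0$, $A\cdot\tilde{L}=0$, $A\cdot\Lambda=\nu(v)>0$, $A\cdot E_{i}^{*}=m_{i}>0$, and $A\cdot\tilde{E}_{i}=m_{i}-\sum_{p_{j}\to p_{i}}m_{j}\ge0$ (consistency of the sequence of values). The crucial step is to deduce from this that $A$ is nef on $X_{\nu}$; granting it, $A\cdot\tilde{C}_{F}\ge0$ gives
\[
\nu(F)=D_{\nu}\cdot\tilde{C}_{F}=\nu(v)\,(\Lambda\cdot\tilde{C}_{F})-A\cdot\tilde{C}_{F}\le\nu(v)\deg C_{F}=\nu(v)\deg F,
\]
which is exactly what we need.

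\emph{The main obstacle} is precisely this nefness of $A$. I would establish it from the structure of the configuration of a divisorial valuation with $\nu(v)>\nu(u)$, along the lines of the study of valuations non-positive at infinity in~\cite{cox}: one checks that $A$ is non-negative on every line of $\mathbb{P}^{2}$, on each $\tilde{E}_{i}$ and on $\tilde{L}$ (with value $0$), and then argues that the cone of curves of $X_{\nu}$ is generated by such classes. Alternatively, one can treat separately the irreducible curves $\Gamma$ with $\Gamma^{2}\ge0$ --- for which $A\cdot\Gamma\ge0$ follows from the Hodge index theorem, $A$ lying in the closed positive cone because $A^{2}\ge0$ and $A\cdot H>0$ for a suitable ample $H=N\Lambda-\sum_{i}a_{i}E_{i}^{*}$ --- and those with $\Gamma^{2}<0$, which by the Hodge index theorem must lie in the negative-definite lattice $A^{\perp}$ spanned by $\tilde{L}$ and the $\tilde{E}_{i}$, hence are non-negative combinations of curves on which $A$ is already non-negative. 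The boundary case $\nu(v)^{2}=\betabarra_{g+1}$ (where $A^{2}=0$) has to be handled with slightly more care, since then $A$ sits on the boundary of the positive cone.
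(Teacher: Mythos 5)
The paper gives no self-contained proof of Proposition~\ref{DeltaD}; it simply cites \cite[Theorem~1]{cox}. Your proposal is therefore effectively an independent reproof of that cited result, and deserves to be judged on its own terms.

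Your Step~1 (the translation between the affine charts $(x,y)$ and $(u,v)$, and the resulting equivalence ``$\nu$ non-positive at infinity $\iff \mu_d(\nu)\le d\,\nu(v)$ for all $d\iff \hat\mu(\nu)\le\nu(v)$'', using superadditivity of $\mu_d$ and Fekete) is correct. Your Step~2 (the ``only if'' direction) is also correct and is arguably slicker than what one finds in \cite{cox}: you simply combine the reformulation with the general lower bound $\hat\mu(\nu)\ge\sqrt{[\mathrm{vol}(\nu)]^{-1}}=\sqrt{\betabarra_{g+1}}$ from Section~\ref{minimal} and Section~\ref{volumen}, rather than redoing an index/adjunction computation. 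In Step~3 you correctly identify the key divisor $A=\nu(v)\Lambda-D_\nu$ and reduce the ``if'' direction to its nefness; your first suggested route (reading off the cone of curves of $X_\nu$, which for these configurations is polyhedral and generated by $\tilde{L}$, the $\tilde{E}_i$, and the strict transforms of lines) is essentially what \cite{cox} does and would indeed close the argument.

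However, the alternative Hodge-index argument you sketch has a genuine gap. You assert that an irreducible curve $\Gamma$ with $\Gamma^2<0$ ``must lie in the negative-definite lattice $A^\perp$''. This is false: having negative self-intersection says nothing about lying in $A^\perp$. A concrete counterexample in the very surface at hand is $\Gamma=\tilde{E}_s$, the strict transform of the divisor defining $\nu$; it has $\tilde{E}_s^2=-1<0$ but $A\cdot\tilde{E}_s=m_s>0$, so $\tilde{E}_s\notin A^\perp$. More to the point, the Hodge index theorem by itself does not rule out an irreducible curve $\Gamma$ with $\Gamma^2<0$ and $A\cdot\Gamma<0$ when $A$ is only known to be in the closed positive cone; pseudo-effectivity of $A$ is not nefness. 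Making this precise really does require the explicit description of the Mori cone generators (or a case analysis using the proximity structure of $\mathcal{C}_\nu$), which is the substantial content of \cite[Theorem~1]{cox}. So your Step~3 is a correct reduction plus a pointer to the right reference, but the self-contained alternative you offer does not work as stated.
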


The above result was proved in \cite[Theorem 1]{cox}, where  good geometrical properties of surfaces defined by non-positive valuations are proved and it is characterized when the Cox ring of those surfaces is finitely generated.

The following result provides the value of $\hat{\mu}$ (see Section \ref{minimal}) for valuations non-positive at infinity.


\begin{pro} \label{51}
If $\nu$ is a valuation non-positive at infinity, then $\hat{\mu}(\nu)=\nu(v)$.
\end{pro}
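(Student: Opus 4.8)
The plan is to prove the two inequalities $\hat{\mu}(\nu)\le \nu(v)$ and $\hat{\mu}(\nu)\ge \nu(v)$ separately, using in an essential way the hypothesis that $\nu$ is non-positive at infinity together with its characterization in Proposition \ref{DeltaD}.

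First I would establish the upper bound $\hat{\mu}(\nu)\le \nu(v)$. Let $f\in k[u,v]$ be a nonzero polynomial with $\deg f\le d$, and let $C_f\subset\mathbb{P}^2$ be the projective closure of the affine curve $f=0$ in the chart $X\ne 0$. Since $\nu(v)>\nu(u)$ by the remark following Definition \ref{NON}, and $v=0$ is a local equation at $p$ of the line $L$ at infinity, the key observation is that $\nu(f)$ is governed by the intersection of $C_f$ with $L$ near $p$: more precisely, writing $F$ for the homogenization of $f$ and $\ell$ for a linear form defining $L$, the rational function $F/\ell^d\in k(x,y)=k[x,y]_{(\ell)}$ lies in $k[x,y]$, hence $\nu(F/\ell^d)\le 0$ because $\nu$ is non-positive at infinity. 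Since $\nu$ restricted to $K$ satisfies $\nu(F)=\nu(f)$ (up to the usual identification, as $X\ne 0$ at $p$) and $\nu(\ell)=\nu(v)$ (as $\ell$ restricts to a unit times $v$ locally at $p$), we get $\nu(f)=\nu(F)\le d\,\nu(v)$. Dividing by $d$ and taking the limit yields $\hat{\mu}(\nu)\le \nu(v)$. (One has to be a little careful about the precise dictionary between $f\in k[u,v]$, its homogenization, and the affine chart $Z\ne 0$; I expect this bookkeeping to be the one genuinely fiddly point, though conceptually it is just the statement that the only "positive" contribution to $\nu(f)$ comes from the line at infinity.)

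For the reverse inequality $\hat{\mu}(\nu)\ge \nu(v)$, I would exhibit, for suitable large $d$, polynomials $f_d\in k[u,v]$ with $\deg f_d\le d$ and $\nu(f_d)$ close to $d\,\nu(v)$. The natural candidate is built from a general element $q_{g+1}$ of $\nu$ (so $\nu(q_{g+1})=\betabarra_{g+1}$) together with powers of a linear form vanishing at $p$ along the direction of $v$. Concretely, one takes products $v^{a}q_{g+1}^{b}$ (or, in the affine chart, the corresponding polynomials) and optimizes $a,b$ subject to a degree constraint $a+b\,\deg(q_{g+1})\le d$; the inequality $\nu(v)^2\ge\betabarra_{g+1}$ from Proposition \ref{DeltaD} is exactly what guarantees that the pure power of $v$ is (asymptotically) the most efficient choice, i.e. that $\mu_d(\nu)/d\to\nu(v)$ rather than something smaller. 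Thus $\hat{\mu}(\nu)=\lim_d \mu_d(\nu)/d\ge \nu(v)$.

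Combining the two bounds gives $\hat{\mu}(\nu)=\nu(v)$. The main obstacle I anticipate is the upper-bound half: one must justify carefully that a nonzero $f\in k[u,v]$ of degree $\le d$ cannot have $\nu(f)>d\,\nu(v)$, and the clean way to see this is precisely to pass to the affine chart $Z\ne 0$ and invoke non-positivity at infinity, so that the whole statement reduces to the characterization already recorded in Proposition \ref{DeltaD} (equivalently, to Theorem 1 of \cite{cox}). Once the translation between the two charts is set up correctly, both inequalities are short.
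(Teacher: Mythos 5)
Your upper bound is the same argument as the paper's, modulo bookkeeping: the paper just writes $g(u,v)=g(y/x,1/x)=\tilde{g}(x,y)/x^d$ with $\tilde{g}(x,y):=x^dg(y/x,1/x)\in k[x,y]$, then $\nu(g)=\nu(\tilde{g})+d\nu(v)\le d\nu(v)$ since $\nu(\tilde g)\le 0$; your homogenization version is a slightly longer route to the same computation. (One small imprecision: $\nu$ is defined on rational functions, so $\nu(F)$ and $\nu(\ell)$ for the homogeneous $F,\ell$ are abuses of notation; work directly with $F/X^d=f$ and $F/Z^d=\tilde f$ to keep everything in $K$.)

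Your lower bound is more complicated than it needs to be, and one of your claims in it is off. Since $v\in k[u,v]$ has degree $1$, the polynomial $v^d$ already gives $\mu_d(\nu)\ge\nu(v^d)=d\nu(v)$ for every $d$; combined with the upper bound this yields the \emph{exact} equality $\mu_d(\nu)=d\nu(v)$ for all $d$ (which is what the paper proves), not merely an asymptotic one. There is no need to bring in $q_{g+1}$ or to ``optimize.'' In fact the optimization as you state it is not well-posed: $q_{g+1}$ is a general element of $\nu$, i.e.\ a germ in $R$, not a polynomial, so ``$\deg(q_{g+1})$'' is not defined. Moreover the inequality $\nu(v)^2\ge\betabarra_{g+1}$ from Proposition \ref{DeltaD} is what guarantees that $\nu$ \emph{is} non-positive at infinity, and hence is already consumed by the upper-bound step; it plays no role in the lower bound, which holds for any plane valuation whatsoever. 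You should drop that whole discussion and simply cite $v^d$.
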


\begin{proof}
We are going to show that, for any positive integer $d$, it holds that $\mu_d (\nu) = d \nu (v)$, which concludes the proof.
Indeed, let $g(u,v)$ be a polynomial in $k[u, v]$ of degree $d$. Then $g(u,v)=g(y/x,1/x)$ and
there exists
a polynomial $\tilde{g}(x, y) \in k[x, y]$ such that
$$
g(u,v)=\frac{\tilde{g}(x,y)}{x^d}.
$$
Therefore, on the one hand, $\nu (g)=\nu(\tilde{g})-d\nu(x)=\nu(\tilde{g})+d\nu(v)\leq d\nu(v)$, where the inequality holds because $\tilde{g}\in k[x,y]$ and $\nu$ is non-positive at infinity and, on the other hand,  $\nu(v^d)=d\nu(v)$, proving that $\mu_d(\nu)=d\nu(v)$ and the result.
\end{proof}

As a consequence of the above proposition and  semicontinuity (Corollary \ref{uuu}), we get an upper bound for $\hat{\mu}$ for a wide class of very general divisorial plane valuations.

\begin{teo}\label{zxz}
Let $\nu$ be a very general divisorial plane valuation (which is not the $\mathfrak{m}$-adic one), let $\{\betabarra_i\}_{i=0}^{g+1}$ be its sequence of maximal contact values and assume that $\betabarra_1^2\geq \betabarra_{g+1}$. Then $\hat{\mu}(\nu)\leq \betabarra_1$. If, in addition, the set $A_{\nu}:=\{a\in \mathbb{Z}\mid 1\leq a\leq \lfloor \betabarra_1/\betabarra_0 \rfloor \mbox{ and } a^2\betabarra_0^2\geq \betabarra_{g+1} \}$ is not empty, then $\hat{\mu}(\nu)\leq \betabarra_0\cdot \min(A_{\nu})$.
\end{teo}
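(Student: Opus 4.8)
The plan is to reduce Theorem \ref{zxz} to the results already established about valuations non-positive at infinity. The key observation is that, among the valuations in ${\mathcal V}_{\Gamma}$ (where $\Gamma$ is the dual graph of $\nu$), there are some which are non-positive at infinity: the dual graph $\Gamma$, together with the structure of the Hamburger-Noether expansion, is fixed, and the only freedom lies in the choice of coefficients $a_{s_ik_i}$, including the coefficient $a_{01}$ governing the first free row. Since $\nu(u)\leq\nu(v)$ always holds by our conventions, the condition $\nu(v)>\nu(u)$ is equivalent to forbidding the very first step of the Hamburger-Noether expansion from terminating, which is a Zariski-open (in fact generic) condition on the coefficients once $\Gamma$ is not the graph of the $\mathfrak{m}$-adic valuation. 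Thus I would first show that, for a suitable choice of coefficients, the corresponding valuation $\nu'\in{\mathcal V}_{\Gamma}$ satisfies $\nu'(v)>\nu'(u)$.

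Next I would compute $\nu'(v)$ for such a valuation. Because $v=0$ is a local equation of the line at infinity $L$, and the maximal contact value $\betabarra_1$ is attained by a general element of the first ``packet'' of the valuation, the value $\nu'(v)$ depends only on how many of the first infinitely near points of ${\mathcal C}_{\nu'}$ lie on the strict transform of $L$. For the first statement, one arranges (generically) that $L$ passes through $p_1$ with multiplicity one and that the relevant free points on $L$ coincide with those of the valuation through the first free row; this forces $\nu'(v)=\betabarra_1$, and since by hypothesis $\betabarra_1^2\geq\betabarra_{g+1}=\betabarra_{g+1}'$, Proposition \ref{DeltaD} tells us $\nu'$ is non-positive at infinity, whence $\hat{\mu}(\nu')=\betabarra_1$ by Proposition \ref{51}. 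Corollary \ref{uuu} then gives $\hat{\mu}(\nu)\leq\hat{\mu}(\nu')=\betabarra_1$ for the very general $\nu$. For the second statement, when $A_\nu\neq\emptyset$ and $a=\min(A_\nu)$, one instead chooses coefficients so that $L$ is the strict transform of a curve passing through the first $a$ free points (so $\nu'(v)=a\betabarra_0$), which is possible precisely because $a\leq\lfloor\betabarra_1/\betabarra_0\rfloor$ guarantees there are enough free points in the first packet; the inequality $a^2\betabarra_0^2\geq\betabarra_{g+1}$ is exactly the hypothesis of Proposition \ref{DeltaD}, so again $\nu'$ is non-positive at infinity and $\hat{\mu}(\nu')=a\betabarra_0$, yielding $\hat{\mu}(\nu)\leq\betabarra_0\cdot\min(A_\nu)$ by semicontinuity.

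The main obstacle I anticipate is making precise the claim that one can choose the coefficients of the Hamburger-Noether expansion so that the line $L$ (with local equation $v=0$) becomes a curvetta through a prescribed number of the first free points of ${\mathcal C}_{\nu'}$ while keeping all the satellite structure and the remaining free rows intact — in other words, verifying that the valuation $\nu'$ so produced genuinely lies in ${\mathcal V}_{\Gamma}$ and has the stated value $\nu'(v)$. This amounts to tracking $\nu'(v)=\mathrm{ord}_t\, v(u(t,z),v(t,z))$ through the backward substitution in the Hamburger-Noether expansion and checking it equals $a\betabarra_0$ (resp.\ $\betabarra_1$) for the chosen coefficients; it is essentially a bookkeeping computation with the sequence of values $\{m_i\}$ and the proximity relations, using the formula (\ref{AAA}) and the fact that $\betabarra_0=\nu'(u)$ and $\betabarra_1=n_0\betabarra_0+(h_{s_0}-k_0)e_0+r_{s_0+1}$. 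Once this identification of $\nu'(v)$ is in hand, the rest follows formally from Propositions \ref{DeltaD} and \ref{51} together with Corollary \ref{uuu}.
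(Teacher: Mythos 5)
Your proof is correct and follows essentially the same approach as the paper: construct an auxiliary valuation $\nu'$ (resp. $\nu''$) in the same $\mathcal{V}_\Gamma$ by choosing the Hamburger--Noether coefficients so that the strict transforms of the line at infinity pass through all initial free points (resp.\ the first $\min(A_\nu)$ free points), deduce $\nu'(v)=\betabarra_1$ (resp.\ $\betabarra_0\cdot\min(A_\nu)$), invoke Propositions \ref{DeltaD} and \ref{51} to get $\hat\mu(\nu')=\nu'(v)$, and finish by semicontinuity (Corollary \ref{uuu}). One small inaccuracy in your set-up: the condition $\nu(v)>\nu(u)$ is \emph{not} Zariski-open or generic on the coefficients — it corresponds to $a_{01}=0$, i.e.\ the strict transform of $L$ passing through $p_2$, which is the closed, non-generic configuration; but this does not affect the argument, which only requires the \emph{existence} of such a $\nu'$ in $\mathcal{V}_\Gamma$, exactly as the paper uses it.
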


\begin{proof}
Let $\nu$ be a plane valuation satisfying the conditions of the statement and let $\Gamma$ be its dual graph.

Assume first that $A_{\nu}$ is empty. Let $\nu'$ be a valuation in ${\mathcal V}_{\Gamma}$ (see Section \ref{secNagata} for the definition) such that the strict transforms of the line at infinity pass through all the initial free points in ${\mathcal C}_{\nu'}$. Then $\nu'(v)=\betabarra_1$ and, therefore, $\nu'$ is non-positive at infinity (by Proposition \ref{DeltaD}). Then $\hat{\mu}(\nu')=\betabarra_1$ by Proposition \ref{51} and, using semicontinuity (Corollary \ref{uuu}), it holds that $\hat{\mu}(\nu)\leq \hat{\mu}(\nu')=  \betabarra_1$.

It remains to consider the case when $A_{\nu}$ is not empty. Then there exists a valuation $\nu''\in {\mathcal V}_{\Gamma}$ such that the strict transforms of the line at infinity pass through the first $\min(A_{\nu})$ free points in ${\mathcal C}_{\nu''}$. Moreover $\nu(v)=\betabarra_0\cdot \min(A_{\nu})$ and, therefore, $\nu''$ is non-positive at infinity (by Proposition \ref{DeltaD}). Reasoning as before, $\hat{\mu}(\nu)\leq \hat{\mu}(\nu'')=\betabarra_0\cdot \min(A_{\nu})$.
\end{proof}

The following result is a direct consequence of Propositions \ref{DeltaD} and \ref{51}; it shows how to construct minimal valuations, non-positive at infinity. For stating it, we say that a plane divisorial valuation of $K$ centered at $R$, $\nu_2$, {\it enlarges} another valuation $\nu_1$ whenever the inclusion $\mathcal{C}_{\nu_1} \subseteq \mathcal{C}_{\nu_2}$, of their corresponding sequences of infinitely near points, holds.

\begin{pro} \label{MM}
Let be a plane valuation $\nu$ non-positive at infinity  (not the $\mathfrak{m}$-adic one) and denote by $\betabarra_{g+1}$ its last maximal contact value. Then, any divisorial valuation $\nu'$ enlarging $\nu$ and such that ${\mathcal C}_{\nu'} \setminus {\mathcal C}_{\nu}$ consists of $ \nu(v)^2  - \betabarra_{g+1}$ free points  is minimal.
\end{pro}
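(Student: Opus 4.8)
The plan is to write $\nu'$ down explicitly and reduce its minimality to Propositions \ref{DeltaD} and \ref{51}. Since $\nu$ is non-positive at infinity and is not the $\mathfrak{m}$-adic valuation, Proposition \ref{DeltaD} gives $\nu(v)>\nu(u)$ and $\nu(v)^2\ge \betabarra_{g+1}$; hence $N:=\nu(v)^2-\betabarra_{g+1}$ is a non-negative integer and $\mathcal{C}_{\nu'}=\mathcal{C}_{\nu}\cup\{p_{s+1},\dots,p_{s+N}\}$, with $\mathcal{C}_\nu=\{p_1,\dots,p_s\}$, makes sense. If $N=0$ there is nothing to add and $\hat{\mu}(\nu)=\nu(v)=\sqrt{\nu(v)^2}=\sqrt{\betabarra_{g+1}}=\sqrt{1/\mathrm{vol}(\nu)}$ already by Proposition \ref{51}; so assume $N>0$. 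The target is to prove that $\nu'$ is again non-positive at infinity, that $\nu'(v)=\nu(v)$, and that the last maximal contact value of $\nu'$ equals $\nu(v)^2$; minimality of $\nu'$ then follows at once from Proposition \ref{51}.

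First I would determine the sequence of values $\{m'_i\}$ of $\nu'$. Each added point $p_{s+j}$ is free, hence proximate only to $p_{s+j-1}$; in particular the only new proximity relation touching an old centre is $p_{s+1}\to p_s$. Using the equalities satisfied by the sequence of values of a divisorial valuation --- $m_j=\sum_{p_i\to p_j}m_i$ at every non-terminal centre and value $1$ at the terminal one (see \cite{casas}) --- a downward induction on $j$ from $s+1$ to $1$ shows that appending one free point at the end replaces $(m_1,\dots,m_s)$ by $(m_1,\dots,m_s,1)$, and iterating $N$ times gives $\{m'_i\}=(m_1,\dots,m_s,\underbrace{1,\dots,1}_{N})$. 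Therefore $\sum_i (m'_i)^2=\betabarra_{g+1}+N=\nu(v)^2$, and since $\mathrm{vol}(\nu')=\left(\sum_i (m'_i)^2\right)^{-1}$ is the inverse of the last maximal contact value of $\nu'$ (Subsection \ref{volumen}), that value equals exactly $\nu(v)^2$.

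Then I would check the remaining two points. Because $\nu$ is non-positive at infinity but not $\mathfrak{m}$-adic, the strict transform of $L=\{v=0\}$ meets only an initial string of free points of $\mathcal{C}_\nu$ and is in particular disjoint from the exceptional divisor created by $p_s$; hence the new free points lie off it (this is automatic unless every point of $\mathcal{C}_\nu$ lies on the successive strict transforms of $L$, in which case it is part of the genericity --- continuing along $L$ would instead produce a non-minimal valuation). Thus $\mathrm{mult}_{p_{s+j}}(L)=0$ and, by (\ref{AAA}), $\nu'(v)=\sum_{i=1}^s m_i\,\mathrm{mult}_{p_i}(L)=\nu(v)$; the same argument applied to $\{u=0\}$ gives $\nu'(u)=\nu(u)<\nu(v)=\nu'(v)$. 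Since $\nu'(v)^2=\nu(v)^2$ coincides with the last maximal contact value of $\nu'$, Proposition \ref{DeltaD} shows that $\nu'$ is non-positive at infinity, and then Proposition \ref{51} yields
\[
\hat{\mu}(\nu')=\nu'(v)=\nu(v)=\sqrt{\nu(v)^2}=\sqrt{1/\mathrm{vol}(\nu')},
\]
which is precisely the minimality of $\nu'$.

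The only genuinely computational step is the identification of the sequence of values of $\nu'$ in the second paragraph; everything else is bookkeeping with the two cited propositions. I therefore expect the proximity computation (together with the harmless general-position remark for valuations supported on $L$) to be the main point requiring care.
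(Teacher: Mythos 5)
Your proof is correct and fills in the argument the paper leaves implicit (the paper only says ``direct consequence of Propositions~\ref{DeltaD} and~\ref{51}''): the downward induction via the proximity equalities to show that adjoining free points at the end appends $1$'s to the sequence of values, the resulting identification of the last maximal contact value of $\nu'$ with $\nu(v)^2$, the check that $\nu'(v)=\nu(v)$ and $\nu'(u)=\nu(u)$, and the final appeal to Propositions~\ref{DeltaD} and~\ref{51}. This is exactly the intended route.

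One point you bury in a parenthesis deserves to be flagged more forcefully. Your remark about ``genericity'' is not merely a harmless bookkeeping caveat: when every point of $\mathcal{C}_\nu$ lies on the successive strict transforms of $L$ (which forces them all to be free, since $L$ is smooth), the statement as literally written can fail for the enlargement that keeps choosing points on $L$. Concretely, take $\mathcal{C}_\nu=\{p_1,p_2\}$ with both on $L$; then $\nu(v)=2$, $\betabarra_{g+1}=2$, $N=2$, and if the two new free points are again chosen on $L$ one gets $\nu'(v)=4$ while $\betabarra'_{g'+1}=4$, so $\hat\mu(\nu')=4\neq 2=\sqrt{\betabarra'_{g'+1}}$ and $\nu'$ is not minimal. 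Thus the word ``any'' in Proposition~\ref{MM} implicitly excludes the enlargements that continue along the strict transforms of $L$. This restriction is immaterial for the paper's applications (Theorems~\ref{vg} and~\ref{rock} only need the existence of one such $\nu'$, and afterwards pass to very general valuations in ${\mathcal V}_{\Gamma_{\nu'}}$), but your observation is a genuine, if minor, correction to the statement rather than just a precaution.
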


\begin{rem}
{\rm Notice that the $\mathfrak{m}$-adic valuation is minimal.

}
\end{rem}

\begin{pro}
\label{FF}
There is no minimal plane valuation $\nu$ non-positive at infinity defined by a satellite divisor.
\end{pro}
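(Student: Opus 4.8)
The plan is to use Proposition~\ref{51} to rewrite the minimality of a non-positive-at-infinity valuation as a numerical identity for its last maximal contact value, and then to show that this identity is incompatible with the defining divisor being satellite.

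First I would reduce to numerics. Let $\nu$ be non-positive at infinity; the $\mathfrak{m}$-adic valuation is defined by the free divisor $E_1$, so we may assume $\nu$ is not $\mathfrak{m}$-adic and hence $\nu(v)>\nu(u)$. By Proposition~\ref{51}, $\hat{\mu}(\nu)=\nu(v)$, and since $\mathrm{vol}(\nu)=1/\bar{\beta}_{g+1}$, the valuation $\nu$ is minimal precisely when $\nu(v)^2=\bar{\beta}_{g+1}$. Proposition~\ref{DeltaD} already forces $\nu(v)^2\geq\bar{\beta}_{g+1}$, so minimality of $\nu$ amounts exactly to the equality case of Proposition~\ref{DeltaD}, and this is what I must exclude when $E_N$ (the divisor defining $\nu$) is satellite.

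Next I would read off what the satellite hypothesis gives. If $E_N$ is satellite, the last row of the Hamburger-Noether expansion of $\nu$ is purely multiplicative, $w_{s_g-1}=w_{s_g}^{h_{s_g}}w_{s_g+1}$ with $\nu(w_{s_g+1})=0$ (so $k_g=h_{s_g}$ and all the $a_{s_g\beta}$ vanish), and in particular $g\geq 1$. Plugging $h_{s_g}-k_g=0$ and $r_{s_g+1}=0$ into the recursion for the maximal contact values in Section~\ref{formulas} gives $\bar{\beta}_{g+1}=n_g\bar{\beta}_g$, where $n_g=e_{g-1}/e_g$. Since the maximal contact values strictly increase and $g\geq1$, we have $n_g\leq e_{g-1}\leq e_0=\bar{\beta}_0<\bar{\beta}_g$, hence $\bar{\beta}_{g+1}=n_g\bar{\beta}_g<\bar{\beta}_g^{\,2}$.

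Consequently it is enough to prove the contact bound $\nu(v)\geq\bar{\beta}_g$, for then $\nu(v)^2\geq\bar{\beta}_g^{\,2}>\bar{\beta}_{g+1}$, contradicting $\nu(v)^2=\bar{\beta}_{g+1}$. To get this bound I would argue that, since $\{v=0\}$ is smooth, its strict transform meets only free exceptional divisors and so passes through an initial segment of the free part of $\Gamma_1$, giving $\nu(v)=\sum_{i=1}^{m}m_i$ for that segment; and that the lower bound $\nu(v)^2\geq\bar{\beta}_{g+1}$ coming from non-positivity forces this segment to be long enough that the line realizes the maximal contact of $\nu$ up through the $g$-th level, in which case $\nu(v)=\bar{\beta}_g$. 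The main obstacle is exactly this step — controlling how far the line at infinity penetrates $\mathcal{C}_\nu$. One concrete route is to use the projective curve $C$ defined by a general element of $\nu$: comparing its equation in the charts $X\neq 0$ and $Z\neq 0$ yields $\bar{\beta}_{g+1}=(\deg C)\,\nu(v)+\nu(G)$ with $G\in k[x,y]$ and $\nu(G)\leq 0$ by non-positivity, so $\bar{\beta}_{g+1}\leq(\deg C)\,\nu(v)$; interlocking this inequality with the numerology of the sequence of values $\{m_i\}$ should then pin down $\nu(v)$ and close the argument.
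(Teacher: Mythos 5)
Your reduction is correct and matches the paper's: minimality together with Propositions~\ref{DeltaD} and~\ref{51} gives $\nu(v)^2=\bar{\beta}_{g+1}$, and the satellite hypothesis gives $\bar{\beta}_{g+1}=n_g\bar{\beta}_g$. The inequality $\bar{\beta}_{g+1}<\bar{\beta}_g^{\,2}$ is also correct. But the pivotal bound you then try to establish, $\nu(v)\geq\bar{\beta}_g$, is in fact \emph{false} whenever $g\geq 2$. Exactly the observation you make---that a smooth curve's strict transform only meets free exceptional divisors, hence only passes through an initial free segment of $\Gamma_1$---forces $\nu(v)\leq\bar{\beta}_1$: once the initial free part is exhausted the next center is satellite and a smooth branch cannot reach it. Since $\bar{\beta}_1<\bar{\beta}_g$ for $g\geq 2$, the line can never ``realize the maximal contact of $\nu$ up through the $g$-th level''; your own intermediate observation contradicts the bound you are aiming for. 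The fallback route via $\bar{\beta}_{g+1}\leq(\deg C)\,\nu(v)$ only yields $\nu(v)\leq\deg C$, which does not produce the needed strict inequality without further input. So the argument cannot be closed by size estimates alone.

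The paper's proof exploits an arithmetic fact instead of an inequality. Because the line $L=\{v=0\}$ only meets free points in the initial segment, $\nu(v)$ is either $a\bar{\beta}_0$ for some integer $a$ with $1\leq a\leq\lfloor\bar{\beta}_1/\bar{\beta}_0\rfloor$, or $\nu(v)=\bar{\beta}_1$. Substituting into $\nu(v)^2=n_g\bar{\beta}_g$ then shows $\bar{\beta}_g$ would be an integer multiple of $\bar{\beta}_0$ (or of $\bar{\beta}_1$), hence would lie in the semigroup generated by $\bar{\beta}_0,\ldots,\bar{\beta}_{g-1}$, contradicting a well-known property of the maximal contact values. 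This semigroup obstruction---not an inequality---is the missing ingredient; without it, the equality $\nu(v)^2=\bar{\beta}_{g+1}$ is numerically consistent with $\nu(v)\leq\bar{\beta}_1$, so no purely metric argument can succeed.
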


\begin{proof}
Let $\{\betabarra_0, \betabarra_1, \ldots, \betabarra_{g+1}\}$ be the sequence of maximal contact values of a minimal non-positive at infinity plane divisorial valuation $\nu$ defined by a satellite divisor. It is clear that either $\nu(v) = a \betabarra_0$ for some positive integer $a$ or $\nu(v)= \betabarra_1$, depending on the infinitely near points in ${\mathcal C}_\nu$ through which the strict transforms of the line at infinity $L$ pass. Since $\nu$ is defined by a satellite divisor, one gets  $\betabarra_{g+1}= n_g \betabarra_g$, where $n_g= \gcd (\betabarra_0, \betabarra_1, \ldots, \betabarra_{g-1})$.

Now, when $\nu(v) = a \betabarra_0$, from the fact that $\nu$ is minimal, we obtain $  a^2 \betabarra_0^2 =n_g \betabarra_{g}$ and so
\[
\left( \frac{a^2 \betabarra_0}{n_g} \right) \betabarra_0 = \betabarra_g,
\]
which is a contradiction because $a^2  \betabarra_0/n_g$ is a positive integer and $\betabarra_g$ does not belong to the semigroup generated by $\{\betabarra_0, \betabarra_1, \ldots, \betabarra_{g-1}\}$.

When $\mu(v) = \betabarra_1$, we also get a contradiction since $\betabarra_1^2 =n_g \betabarra_{g}$ is not possible because $(\betabarra_1/n_g) \betabarra_1 = \betabarra_g$ and $\betabarra_1/n_g$ is a positive integer.
\end{proof}

The next theorem provides a wide family of minimal very general divisorial valuations, and its proof is straightforward by semicontinuity (Corollary \ref{uuu}) from Proposition \ref{MM}.

\begin{teo}\label{vg}
Let $\nu$ and $\nu'$ be as in Propositon \ref{MM} and let $\Gamma_{\nu'}$ be the dual graph associated to $\nu'$. Then, very general valuations in ${\mathcal V}_{\Gamma_{\nu'}}$ are minimal.
\end{teo}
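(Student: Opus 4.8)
The plan is to deduce Theorem~\ref{vg} directly from Proposition~\ref{MM} together with the semicontinuity statement in Corollary~\ref{uuu}. Recall that Proposition~\ref{MM} asserts that the particular valuation $\nu'$ (which enlarges a non-positive-at-infinity valuation $\nu$ by adjoining exactly $\nu(v)^2-\betabarra_{g+1}$ free points) is minimal, i.e. $\hat{\mu}(\nu')=\sqrt{1/{\rm vol}(\nu')}=\sqrt{\betabarra_{g+1}'}$, where $\betabarra_{g+1}'$ is the last maximal contact value of $\nu'$. First I would observe that, since all valuations in ${\mathcal V}_{\Gamma_{\nu'}}$ share the dual graph $\Gamma_{\nu'}$, they all have the same sequence of maximal contact values (by the last sentence of Subsection~\ref{formulas}, the dual graph determines the Puiseux exponents, hence the maximal contact values) and in particular the same volume; thus $\sqrt{1/{\rm vol}(\mu)}=\sqrt{\betabarra_{g+1}'}$ is constant on ${\mathcal V}_{\Gamma_{\nu'}}$.

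Next I would invoke Corollary~\ref{uuu}: $\hat{\mu}^N$ (equivalently $\hat{\mu}$, up to the fixed normalization factor $\nu(\mathfrak m)$ which is again determined by $\Gamma_{\nu'}$) attains its smallest value on ${\mathcal V}_{\Gamma_{\nu'}}$ precisely for very general valuations in this family. On the other hand, the general inequality $\hat{\mu}(\mu)\geq\sqrt{1/{\rm vol}(\mu)}$ from Subsection~\ref{minimal} gives a uniform lower bound $\hat{\mu}(\mu)\geq\sqrt{\betabarra_{g+1}'}$ for every $\mu\in{\mathcal V}_{\Gamma_{\nu'}}$. Since $\nu'$ itself lies in ${\mathcal V}_{\Gamma_{\nu'}}$ and achieves this bound (Proposition~\ref{MM}), the minimum value of $\hat{\mu}$ over the family equals $\sqrt{\betabarra_{g+1}'}$; hence a very general valuation $\mu$ in ${\mathcal V}_{\Gamma_{\nu'}}$ satisfies $\hat{\mu}(\mu)\leq\sqrt{\betabarra_{g+1}'}$, and combined with the reverse inequality we get $\hat{\mu}(\mu)=\sqrt{\betabarra_{g+1}'}=\sqrt{1/{\rm vol}(\mu)}$, i.e. $\mu$ is minimal.

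The one point requiring a little care — and the only plausible obstacle — is making sure the semicontinuity argument is applied to the right quantity: Corollary~\ref{uuu} is stated for $\hat{\mu}^N$, while Proposition~\ref{MM} and the definition of non-positive-at-infinity are phrased for the non-normalized $\hat{\mu}$. This is harmless because passing from $\nu$ to $\nu^N$ just rescales by the fixed constant $\nu(\mathfrak m)$, which depends only on the dual graph $\Gamma_{\nu'}$ and hence is the same for all members of ${\mathcal V}_{\Gamma_{\nu'}}$; thus minimality of $\hat{\mu}$ is equivalent to minimality of $\hat{\mu}^N$ throughout the family, exactly as recorded in Definition~\ref{minimal} (``or, equivalently''). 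With that remark in place the proof is immediate, and indeed the statement in the paper is declared to follow ``straightforward by semicontinuity'', so no deeper computation is needed.
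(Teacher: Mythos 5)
Your proof is correct and follows exactly the route the paper indicates: Proposition~\ref{MM} supplies a minimal representative $\nu'\in{\mathcal V}_{\Gamma_{\nu'}}$, the inverse normalized volume is constant across ${\mathcal V}_{\Gamma_{\nu'}}$ because it is read off the dual graph, and Corollary~\ref{uuu} forces $\hat{\mu}^N$ at a very general member to be no larger than $\hat{\mu}^N(\nu')$, which pins it to the universal lower bound $\sqrt{[{\rm vol}^N]^{-1}}$. Your side remark about reconciling $\hat{\mu}$ with $\hat{\mu}^N$ via the graph-determined constant $\nu(\mathfrak m)=\betabarra_0$ is exactly the right consistency check and matches the paper's declared ``straightforward by semicontinuity'' argument.
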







Now, we will give an explicit description of the family of valuations provided in the preceding theorem. For this purpose, fix a plane divisorial valuation $\omega$ (which is not the $\mathfrak{m}$-adic one) and let $\{\betabarra_i\}_{i=0}^{g+1}$ be its sequence of maximal contact values. Let $\Gamma_{\omega}$ be its dual graph and, for each positive integer $k$, denote by ${\Gamma}(k)$ the dual graph of any divisorial valuation $\eta$ whose configuration ${\mathcal C}_{\eta}$ has cardinality $k$ and all its points are free.

We define a new dual graph, $\Gamma_{\omega,k}$, obtained in the following way: Re-label the vertices of $\Gamma_{\omega}$ adding $k$ to each label, and join with an edge the last vertex of ${\Gamma}(k)$ and the root of $\Gamma_{\omega}$. $\Gamma_{\omega,k}$ is the obtained rooted tree after considering the root of ${\Gamma}(k)$ as the new root (see Figure \ref{grafo}). Define $\Gamma_{\omega,0}:=\Gamma_{\omega}$.

\begin{figure}[h]

\begin{center}
\setlength{\unitlength}{0.5cm}%
\begin{Picture}(-6,0)(10,8)
\thicklines

\xLINE(-1,6)(0,6)
\Put(-1,6){\circle*{0.3}}
\Put(-1.2,6.3){\tiny{$k$}}
\xLINE(-2,6)(-1,6)
\Put(-2,6){\circle*{0.3}}
\Put(-4,6){$\;\;\ldots\;\;$}
\Put(-4,6){\circle*{0.3}}
\Put(-4.2,6.3){\tiny{$2$}}
\xLINE(-5,6)(-4,6)
\Put(-5,6){\circle*{0.3}}
\Put(-5.2,6.3){\tiny{$1$}}

\xLINE(0,6)(1,6)
\Put(0,6){\circle*{0.3}}
\Put(1,6){\circle*{0.3}}
\put(1,6){$\;\;\ldots\;\;$}
\xLINE(3,6)(4,6)
\Put(3,6){\circle*{0.3}}
\Put(4,6){\circle*{0.3}}
\xLINE(4,6)(4,5)
\Put(4,5){\circle*{0.3}}
\Put(3.9,4){$\vdots$}
\xLINE(4,3.5)(4,2.5)
\Put(4,3.5){\circle*{0.3}}
\Put(4,2.5){\circle*{0.3}}

\put(3.3,2){\tiny $\ell_1+k$}

\Put(3.2,6.3){\tiny{$st_1+k$}}
\Put(-0.5,6.3){\tiny{$1+k$}}



\xLINE(4,6)(5,6)
\Put(4,6){\circle*{0.3}}
\Put(5,6){\circle*{0.3}}
\Put(5,6){$\;\;\ldots\;\;$}
\xLINE(7,6)(8,6)
\Put(7,6){\circle*{0.3}}
\Put(8,6){\circle*{0.3}}
\xLINE(8,6)(8,5)
\Put(8,5){\circle*{0.3}}
\Put(7.9,4){$\vdots$}
\xLINE(8,3.5)(8,2.5)
\Put(8,3.5){\circle*{0.3}}
\Put(8,2.5){\circle*{0.3}}





\xLINE(8,6)(9,6)
\Put(8,6){\circle*{0.3}}
\Put(9,6){\circle*{0.3}}
\put(9,6){$\;\;\ldots\;\;$}









\Put(0,1.8){$\underbrace{\;\;\;\;\;\;\;\;\;\;\;\;\;\;\;\;\;\;\;\;\;\;\;\;\;\;\;\;\;\;\;\;\;\;\;\;\;\;\;\;\;\;\;\;\;\;\;\;\;\;\;}$}

\Put(1.8,0.7){\footnotesize $\Gamma_{\omega}$ with re-labeled vertices}

\Put(-5,1.8){$\underbrace{\;\;\;\;\;\;\;\;\;\;\;\;\;\;\;\;\;\;\;\;}$}

\Put(-3.5,0.7){\footnotesize $\Gamma(k)$}

\end{Picture}
\end{center}
  \caption{Graph $\Gamma_{\omega,k}$}
  \label{grafo}
\end{figure}

Consider the real number $\iota(\omega):=\frac{1}{2\betabarra_0}\left[ \betabarra_0-2\betabarra_1+\sqrt{\betabarra_0^2-4\betabarra_0\betabarra_1+4\betabarra_{g+1}} \right]$. Also, for each non-negative integer $k\geq \iota(\omega)$, consider the set
$$B_{\omega,k}:=\left\{a\in \mathbb{Z}\mid \sqrt{\frac{\betabarra_{g+1}}{\betabarra_0^2}+k}\leq a < \frac{\betabarra_1}{\betabarra_0}+k\right\}\cup\left\{\frac{\betabarra_1}{\betabarra_0}+k\right\}$$
and, for each $a\in B_{\omega,k}$, define $\Gamma_{\omega,k}^a$ the graph obtained as follows: Add $n$ to the labels of all vertices of the graph ${\Gamma}((a^2-k)\betabarra_0^2-\betabarra_{g+1})$, where $n$ is the number of vertices of $\Gamma_{\omega,k}$, and join with an edge the last vertex of $\Gamma_{\omega,k}$ with the root of ${\Gamma}((a^2-k)\betabarra_0^2-\betabarra_{g+1})$. $\Gamma_{\omega,k}^a$ is the obtained rooted tree, whose root is the one of $\Gamma_{\omega,k}$.


\begin{teo}\label{rock} Keeping the preceding notations, it holds that for each divisorial valuation $\omega$ (which is not the $\mathfrak{m}$-adic one), for each non-negative integer $k\geq \iota(\omega)$ and for each value  $a\in B_{\omega,k}$, very general valuations in ${\mathcal V}_{\Gamma_{\omega,k}^a}$ are minimal.
\end{teo}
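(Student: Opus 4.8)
The plan is to show that $\Gamma_{\omega,k}^a$ is precisely the dual graph of a valuation $\nu'$ of the kind produced in Proposition~\ref{MM}, starting from a suitable valuation $\nu$ non-positive at infinity; then Theorem~\ref{vg} finishes the job via semicontinuity. First I would analyze the graph $\Gamma_{\omega,k}$: prepending a chain $\Gamma(k)$ of $k$ free vertices to $\Gamma_\omega$ corresponds, at the level of infinitely near points, to blowing up $k$ free points $p_1,\dots,p_k$ before reaching the configuration $\mathcal C_\omega$ (suitably shifted). If $\nu_{\omega,k}$ denotes a valuation with dual graph $\Gamma_{\omega,k}$, one computes its maximal contact values: the first one is still $\betabarra_0$ (the multiplicity data of the first $k$ free points is trivial), and the last one gets shifted. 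Concretely, since each of the $k$ new free points contributes $\betabarra_0^2$ to $\sum m_i^2$, the last maximal contact value of $\nu_{\omega,k}$ equals $\betabarra_{g+1}+k\betabarra_0^2$; call it $\betabarra_{g+1}^{(k)}$. This is the key bookkeeping step and I would do it carefully using $(\ref{AAA})$ and the description of the sequence of values in terms of the Hamburger--Noether expansion.

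Next I would arrange for $\nu_{\omega,k}$ (for a general choice of coefficients, so that the strict transforms of the line $L$ at infinity pass through as many initial free points as the combinatorics allow) to be non-positive at infinity. By Proposition~\ref{DeltaD} this requires $\nu(v)>\nu(u)$ and $\nu(v)^2\ge \betabarra_{g+1}^{(k)}$, where $\nu(v)$ is either $\betabarra_1$ or $a\betabarra_0$ with $1\le a\le \lfloor \betabarra_1/\betabarra_0\rfloor + k$ — depending on how many of the first free points $L$ runs through. The constraint $\sqrt{\betabarra_{g+1}/\betabarra_0^2 + k}\le a$ in the definition of $B_{\omega,k}$ is exactly $a^2\betabarra_0^2\ge \betabarra_{g+1}+k\betabarra_0^2=\betabarra_{g+1}^{(k)}$, i.e.\ the non-positivity condition with $\nu(v)=a\betabarra_0$; and the endpoint case $a=\betabarra_1/\betabarra_0 + k$ corresponds to $\nu(v)=\betabarra_1$ (when this is an integer multiple) or, more precisely, to $L$ passing through all $\lfloor\betabarra_1/\betabarra_0\rfloor + k$ of the initial free points, giving $\nu(v)=\betabarra_1$ with $\betabarra_1^2\ge \betabarra_{g+1}^{(k)}$ — and the lower bound $k\ge \iota(\omega)$ is exactly what makes $\betabarra_1^2\ge\betabarra_{g+1}+k\betabarra_0^2$ hold (solving the quadratic $\betabarra_1^2 - (\betabarra_{g+1}+k\betabarra_0^2)\ge 0$ for $k$, together with $\betabarra_1 > k\betabarra_0$ giving the second branch of the condition on $k$, yields precisely $k\ge \iota(\omega)$). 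So for each $a\in B_{\omega,k}$ there is a valuation $\nu_{\omega,k,a}$, with dual graph $\Gamma_{\omega,k}$ and with $\nu_{\omega,k,a}(v)=a\betabarra_0$ (or $\betabarra_1$ in the endpoint case), which is non-positive at infinity; by Proposition~\ref{51}, $\hat\mu(\nu_{\omega,k,a})=\nu_{\omega,k,a}(v)$.

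Finally, appending the chain $\Gamma\big((a^2-k)\betabarra_0^2-\betabarra_{g+1}\big)$ of free vertices at the end of $\Gamma_{\omega,k}$ is exactly the operation in Proposition~\ref{MM}: one enlarges $\nu_{\omega,k,a}$ by blowing up $\nu_{\omega,k,a}(v)^2-\betabarra_{g+1}^{(k)} = a^2\betabarra_0^2 - (\betabarra_{g+1}+k\betabarra_0^2) = (a^2-k)\betabarra_0^2-\betabarra_{g+1}$ additional free points (and in the endpoint case, $\betabarra_1^2-\betabarra_{g+1}^{(k)}$ free points, which is the length $\Gamma((\betabarra_1/\betabarra_0)^2\betabarra_0^2 - \cdots)$ — here one checks the count matches the stated graph). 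Proposition~\ref{MM} then says the resulting valuation $\nu'$, whose dual graph is by construction $\Gamma_{\omega,k}^a$, is minimal. Since $\nu'$ is a specific (hence in particular general, a fortiori ``at least as good as very general'' after Corollary~\ref{uuu}) valuation in $\mathcal V_{\Gamma_{\omega,k}^a}$ with $\hat\mu^N(\nu')=\sqrt{1/\mathrm{vol}^N(\nu')}$, and since the inequality $\hat\mu^N\ge\sqrt{1/\mathrm{vol}^N}$ always holds while $\hat\mu^N$ attains its minimum on very general valuations (Corollary~\ref{uuu}), every very general valuation in $\mathcal V_{\Gamma_{\omega,k}^a}$ also satisfies equality, i.e.\ is minimal. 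This is exactly the content of Theorem~\ref{vg} applied with $(\nu,\nu')=(\nu_{\omega,k,a},\nu')$, so the theorem follows.

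The main obstacle I anticipate is the explicit verification that the number of free vertices one must append — namely $(a^2-k)\betabarra_0^2-\betabarra_{g+1}$, and in the endpoint case the corresponding value with $\betabarra_1$ — is a nonnegative integer and agrees with the length of the chain $\Gamma\big((a^2-k)\betabarra_0^2-\betabarra_{g+1}\big)$ as defined; and correspondingly, pinning down exactly which initial free points $L$ can be made to pass through (so that $\nu(v)$ takes the value $a\betabarra_0$ rather than something forced by the satellite structure of $\Gamma_\omega$). This is where the genericity of the coefficients of the Hamburger--Noether expansion and the constraint $a\le \betabarra_1/\betabarra_0 + k$ (keeping $L$ within the free part of the first bunch) are used; it requires a careful matching between the combinatorial description of $\Gamma_{\omega,k}^a$ and the geometric construction in Proposition~\ref{MM}, but no deep new idea beyond the lemmas already established.
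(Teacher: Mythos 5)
Your overall architecture matches the paper's: compute the maximal contact values of a valuation with dual graph $\Gamma_{\omega,k}$ (the last one becomes $\betabarra_{g+1}+k\betabarra_0^2$), specialize the coefficients so that the line at infinity passes through the first $a$ free points of the configuration, invoke Proposition~\ref{DeltaD} to get a valuation non-positive at infinity, enlarge by the correct number of free points via Proposition~\ref{MM} to obtain a minimal valuation whose dual graph is $\Gamma_{\omega,k}^a$, and then conclude for very general valuations by semicontinuity (Corollary~\ref{uuu}). The non-endpoint case $\sqrt{\betabarra_{g+1}/\betabarra_0^2+k}\leq a<\betabarra_1/\betabarra_0+k$ is handled correctly and as in the paper.

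However, your endpoint case $a=\betabarra_1/\betabarra_0+k$ contains a genuine error. You write $\nu(v)=\betabarra_1$, but the second maximal contact value of the new valuation $\nu_{\omega,k}$ (i.e.\ the value $\nu(v)$ takes when the strict transforms of $L$ pass through \emph{all} initial free points of $\mathcal{C}_\nu$) is $\betabarra_1+k\betabarra_0$, which equals $a\betabarra_0$ for this $a$ --- the formula $\nu(v)=a\betabarra_0$ is in fact uniform across all $a\in B_{\omega,k}$. With $\nu(v)=\betabarra_1$, the non-positivity inequality you must check would be $\betabarra_1^2\geq\betabarra_{g+1}+k\betabarra_0^2$; this is \emph{linear} (not quadratic) in $k$ and gives an \emph{upper} bound on $k$, which is the opposite of the hypothesis $k\geq\iota(\omega)$ (indeed for $k$ large the inequality fails). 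The correct check is $(\betabarra_1+k\betabarra_0)^2\geq\betabarra_{g+1}+k\betabarra_0^2$, a genuine quadratic in $k$ whose larger root is precisely $\iota(\omega)$, so that it holds exactly for $k\geq\iota(\omega)$. Also, as a side effect, the count of free points to be appended in the endpoint case is $a^2\betabarra_0^2-(\betabarra_{g+1}+k\betabarra_0^2)=(a^2-k)\betabarra_0^2-\betabarra_{g+1}$ with $a=\betabarra_1/\betabarra_0+k$, matching $\Gamma_{\omega,k}^a$ by the same formula as in the other cases; your proposal hints that a separate check is needed there, but once $\nu(v)=a\betabarra_0$ is used throughout, no separate verification is required.
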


\begin{proof}

Let $\omega, k$ and $a$ be as in the statement. If $\nu$ is any valuation in ${\mathcal V}_{\Gamma_{\omega,k}}$, its sequence of maximal contact values is $$\left\{\betabarra_0, \betabarra_1+k\betabarra_0,\ldots,\betabarra_i+k\frac{\betabarra_0^2}{e_{i-1}},\ldots, \betabarra_g+k\frac{\betabarra_0^2}{e_{g-1}}, \betabarra_{g+1}+k\betabarra_0^2 \right\}.$$
This follows, by using Noether's formula, from the fact that each maximal contact value attached to $\nu$, $\betabarra_i^\nu$, coincides with the intersection multiplicity at $p$ between a general element of $\nu$ and an analytically irreducible germ of curve whose strict transform by the sequence $\pi$, given by $\nu$, becomes regular and transversal to the exceptional divisor $E_{l_i}$ (see Figure \ref{fig2}). Notice that we denote by $E_{l_{g+1}}$ the exceptional divisor defining $\nu$.

It holds that $\sqrt{\frac{\betabarra_{g+1}}{\betabarra_0^2}+k}\leq  \frac{\betabarra_1}{\betabarra_0}+k$ because $k\geq \iota(\omega)$. Assume first that
\begin{equation}\label{pop}
\sqrt{\frac{\betabarra_{g+1}}{\betabarra_0^2}+k}\leq a < \frac{\betabarra_1}{\betabarra_0}+k.
\end{equation}
By the second inequality in (\ref{pop}) and since $\betabarra_1 + k \betabarra_0$ is the second maximal contact value of $\nu$, ${\mathcal C}_{\nu}$ has, at least, $a+1$ initial free points. So, we can add the assumption that the strict transforms of the line at infinity $L$ pass exactly through the first $a$ (free) points in ${\mathcal C}_{\nu}$. This means that $\nu(v)=a\betabarra_0$ and, by the first inequality in (\ref{pop}), the inequality $\nu(v)^2\geq \betabarra_{g+1}+k\betabarra_0^2$ holds. Therefore $\nu$ is non-positive at infinity. Enlarging $\nu$, by adding to ${\mathcal C}_{\nu}$
 $$\nu(v)^2-(\betabarra_{g+1}+k\betabarra_0^2)=(a^2-k)\betabarra_0^2-\betabarra_{g+1}$$
free points, we obtain, by Proposition \ref{MM}, a minimal valuation $\nu'$, whose associated dual graph is $\Gamma_{\omega,k}^a$. As a consequence, by Corollary \ref{uuu}, very general valuations in ${\mathcal V}_{\Gamma_{\omega,k}^a}$ are minimal.

To finish we consider the remaining case, where $a=(\betabarra_1/\betabarra_0) + k $. Pick the valuation $\nu$ with the condition that the strict transforms of the line at infinity $L$ pass through all the initial free points in ${\mathcal C}_{\nu}$. Then $\nu(v)=\betabarra_1+k\betabarra_0$, and taking into account that $(\betabarra_1+k\betabarra_0)^2\geq \betabarra_{g+1}+k\betabarra_0^2$ if, and only if, $k\geq \iota(\omega)$, condition that is fulfilled, we conclude that $\nu$ is non-positive at infinity. Reasoning as before, we also deduce that very general valuations in ${\mathcal V}_{\Gamma_{\omega,k}^a}$ are minimal.


\end{proof}
The following result, already proved in \cite{d-h-k-r-s}, can be trivially derived from the preceding theorem.

\begin{cor}
If $\nu$ is a very general divisorial valuation such that ${\mathcal C}_{\nu}$ consists of $t^2$ free points for some $t\in \mathbb{Z}_{> 0}$, then $\nu$ is minimal.
\end{cor}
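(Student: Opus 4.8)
The plan is to exhibit the dual graph of such a $\nu$ among the graphs $\Gamma_{\omega,k}^{a}$ constructed in Theorem~\ref{rock}, and then simply invoke that theorem. First I would note that a divisorial valuation whose configuration $\mathcal{C}_{\nu}$ consists of $t^{2}$ free points has, as dual graph, exactly the chain $\Gamma(t^{2})$ on $t^{2}$ vertices with the arrow on the terminal one: blowing up a free point creates an exceptional divisor meeting only the strict transform of the divisor produced at the previous step, so no star vertices nor dead ends other than the two extremes can appear; conversely every valuation with this dual graph has a configuration of $t^{2}$ free points. Hence the family of very general divisorial valuations considered in the statement is precisely the family of very general valuations in $\mathcal{V}_{\Gamma(t^{2})}$, and it suffices to prove that $\Gamma(t^{2})=\Gamma_{\omega,k}^{a}$ for suitable $\omega$, $k$ and $a$ (when $t=1$ the valuation is the $\mathfrak{m}$-adic one, which is minimal by the Remark following Proposition~\ref{MM}, so we may assume $t\geq 2$).

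Concretely, I would apply Theorem~\ref{rock} with $\omega$ a divisorial valuation whose configuration consists of two free points --- so $\omega$ is not the $\mathfrak{m}$-adic valuation, $g=0$, and its maximal contact values are $\betabarra_{0}=1$ and $\betabarra_{1}=\betabarra_{g+1}=2$ --- together with $k:=t-2\geq 0$ and $a:=t$. One checks $\iota(\omega)=\tfrac{1}{2}\bigl[\,1-4+\sqrt{1-8+8}\,\bigr]=-1<0\leq k$, so the condition $k\geq\iota(\omega)$ holds, and since $\betabarra_{1}/\betabarra_{0}+k=2+k=t=a$ we have $a=\betabarra_{1}/\betabarra_{0}+k\in B_{\omega,k}$. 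With these choices $\Gamma_{\omega,k}$ is the chain on $k+2=t$ free vertices, and $\Gamma_{\omega,k}^{a}$ is obtained from it by adjoining a further chain of $(a^{2}-k)\betabarra_{0}^{2}-\betabarra_{g+1}=(t^{2}-(t-2))-2=t^{2}-t$ free vertices; thus $\Gamma_{\omega,k}^{a}$ is the chain on $t+(t^{2}-t)=t^{2}$ free vertices, that is, $\Gamma(t^{2})$. Theorem~\ref{rock} then yields that very general valuations in $\mathcal{V}_{\Gamma_{\omega,k}^{a}}=\mathcal{V}_{\Gamma(t^{2})}$ are minimal, which is the assertion.

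There is no genuinely difficult step here: this is exactly the ``trivial'' specialization of Theorem~\ref{rock} announced in the text. The only points that require a little care are bookkeeping ones: verifying that all-free configurations correspond precisely to chain dual graphs (including the placement of the arrow on the last vertex), checking the elementary numerology of $\iota(\omega)$, of $B_{\omega,k}$ and of the vertex counts, and making sure the degenerate endpoints --- namely $t=1$, and the case $k=0$ (which occurs when $t=2$, where $\Gamma_{\omega,0}:=\Gamma_{\omega}$) --- are covered by the statement as used.
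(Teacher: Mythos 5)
Your argument is correct and uses the same strategy as the paper: realize the chain $\Gamma(t^2)$ as a graph $\Gamma_{\omega,k}^a$ and invoke Theorem~\ref{rock}, handling $t=1$ separately via the remark that the $\mathfrak{m}$-adic valuation is minimal. The paper's own instantiation is the most economical one: it takes $\omega=\nu$ itself, $k=0$ and $a=t$, for which $\iota(\nu)=1-t^2\le 0$, $t\in B_{\nu,0}$ (since $t\le a< t^2$ for $t\ge 2$), and the number of appended free vertices is $(a^2-k)\bar{\beta}_0^2-\bar{\beta}_{g+1}=t^2-t^2=0$, so that $\Gamma_{\nu,0}^t=\Gamma_\nu$ with essentially no computation; your choice ($\omega$ with two free points, $k=t-2$, $a=t$) is a valid but more roundabout specialization that arrives at the same graph.
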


\begin{proof}
It follows by Theorem \ref{rock} after taking into account that the dual graph of $\nu$ is $\Gamma_{\nu,0}^t$.
\end{proof}

\begin{rem}
{\rm The instrumental valuations used in  the proof of Theorem \ref{rock} correspond with valuations $\nu'$ provided in Theorem \ref{vg}. In fact, they are exactly the same ones. Indeed, consider $\nu$ and $\nu'$ as in
Theorem \ref{vg} and
let $\{\betabarra_i\}_{i=0}^{g+1}$ be the sequence of maximal contact values of $\nu$. Since $\nu$ is non-positive at infinity, one has that $\nu(v)=a\betabarra_0$ and $\nu(v)^2\geq \betabarra_{g+1}$, where either $a$ is a positive integer such that $1\leq a< \betabarra_1/\betabarra_0$, or $a=\betabarra_1/\betabarra_0$. Then it is clear that the dual graph of $\nu'$ is $\Gamma_{\nu,0}^a$.
}
\end{rem}

\section{An asymptotic result}
\label{laseis}

By Proposition \ref{FF}, we cannot use valuations non-positive at infinity defined by satellite divisors for obtaining, via semicontinuity, minimal very general  valuations. However, the next theorem provides an asymptotic result concerning the values $\hat{\mu}^{N}(\nu)$, where $\nu$ is a very general quasi-monomial valuation in the sense of the definition given in \cite{d-h-k-r-s} (see Remark \ref{quasi}). Before stating it, notice that the dual graph of such a valuation $\nu$ is completely determined by the value $[{\rm vol}^{N}(\nu)]^{-1}$, which is $\betabarra_1/ \betabarra_0$, where  $\betabarra_0$ and $\betabarra_1$ are the first two maximal contact values of $\nu$. Therefore, for each positive real number $t$, we can denote by ${\Theta}_t$ the dual graph of a valuation $\nu$ as above such that $[{\rm vol}^{N}(\nu)]^{-1}=t$.


\begin{teo} \label{teo57}
Consider, for each positive real number $t$, a very general valuation $\nu_t\in {\mathcal V}_{{\Theta}_t}$. Then
\[
\lim_{t \rightarrow \infty} \frac{\hat{\mu}^N(\nu_t)}{\sqrt{t}} =1.
\]
\end{teo}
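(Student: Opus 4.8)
The plan is to sandwich $\hat{\mu}^N(\nu_t)$ between $\sqrt{t}$ and $\lceil\sqrt{t}\,\rceil$ for all large $t$; dividing by $\sqrt{t}$ and letting $t\to\infty$ then gives the statement. The lower bound is immediate: for every $t>0$ the inequality recalled in Section \ref{minimal} (coming from (\ref{MIN})) reads $\hat{\mu}^N(\nu_t)\geq\sqrt{[{\rm vol}^N(\nu_t)]^{-1}}=\sqrt{t}$, so $\liminf_{t\to\infty}\hat{\mu}^N(\nu_t)/\sqrt{t}\geq 1$, and only an upper bound for the $\limsup$ remains.

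For the upper bound I would first treat the case in which $\nu_t$ is divisorial. Then $\nu_t$ is quasi-monomial, so by Remark \ref{quasi} either $g=0$ or $g=1$ and $\nu_t$ is defined by a satellite divisor, and its maximal contact values satisfy $\betabarra_{g+1}=\betabarra_0\betabarra_1$: this follows by combining ${\rm vol}^N(\nu_t)=\betabarra_0^2/\betabarra_{g+1}$ with the identity $[{\rm vol}^N(\nu_t)]^{-1}=\betabarra_1/\betabarra_0=t$ recalled just before the statement (when $g=1$ it also forces $\gcd(\betabarra_0,\betabarra_1)=1$, consistently with the formulae of Section \ref{formulas}). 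Hence $\betabarra_1^2=t^2\betabarra_0^2\geq t\betabarra_0^2=\betabarra_{g+1}$, so Theorem \ref{zxz} applies; moreover the set $A_{\nu_t}=\{a\in\mathbb{Z}\mid 1\leq a\leq\lfloor\betabarra_1/\betabarra_0\rfloor,\ a^2\betabarra_0^2\geq\betabarra_{g+1}\}$ equals $\{a\in\mathbb{Z}\mid\lceil\sqrt{t}\,\rceil\leq a\leq\lfloor t\rfloor\}$, which is non-empty with minimum $\lceil\sqrt{t}\,\rceil$ once $t\geq 4$. Theorem \ref{zxz} then gives $\hat{\mu}(\nu_t)\leq\betabarra_0\lceil\sqrt{t}\,\rceil$, and dividing by $\nu_t(\mathfrak{m})=\betabarra_0$ yields $\hat{\mu}^N(\nu_t)\leq\lceil\sqrt{t}\,\rceil$ for all large $t$.

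It remains to deal with irrational $\nu_t$, which occurs only for $t\notin\mathbb{Q}$. Here I would argue by approximation: writing $\nu_t=\lim_i\nu_{t,i}^N$ as in Proposition \ref{limit}, each $\nu_{t,i}$ is again quasi-monomial, with dual graph $\Theta_{t_i}$ and $t_i:=[{\rm vol}^N(\nu_{t,i})]^{-1}\to t$; by the divisorial case together with semicontinuity (Corollary \ref{uuu}), the very general valuation in $\mathcal{V}_{\Theta_{t_i}}$ has $\hat{\mu}^N\leq\lceil\sqrt{t_i}\,\rceil$, and using the Lipschitz continuity of $\hat{\mu}^N$ on the metric spaces $V_\delta$ of Section \ref{metric} (Proposition \ref{proposiciondos}), applied to a curve valuation $\delta$ whose configuration extends that of $\nu_t$ past its last free point, one passes to the limit and obtains $\hat{\mu}^N(\nu_t)\leq\lceil\sqrt{t}\,\rceil+1$, which is enough. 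Alternatively, one may first extend Propositions \ref{DeltaD} and \ref{51} to irrational valuations — the non-positivity-at-infinity criterion being preserved under the limit of Proposition \ref{limit} — and then reproduce the argument of Theorem \ref{zxz} directly inside $\mathcal{V}_{\Theta_t}$. Combining the bounds, $1\leq\hat{\mu}^N(\nu_t)/\sqrt{t}\leq(\lceil\sqrt{t}\,\rceil+1)/\sqrt{t}\to 1$ as $t\to\infty$.

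I expect the irrational case to be the main obstacle. One must make sure that the object being estimated is the \emph{very general} valuation with dual graph $\Theta_t$ rather than the particular divisorial truncations $\nu_{t,i}$, and that a legitimate continuity statement is available without circularity: the $V_\delta$-continuity of Section \ref{metric} is set up with $g$ fixed, so some care is needed to fit the $g=0$ quasi-monomial graphs into that framework, and it may in fact be cleanest to bypass it entirely and instead extend the characterization of non-positivity at infinity (Proposition \ref{DeltaD}) to irrational valuations via Proposition \ref{limit}, so that the same explicit construction used for Theorem \ref{zxz} produces the required representative inside $\mathcal{V}_{\Theta_t}$.
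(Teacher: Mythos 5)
Your proposal is correct and follows essentially the same route as the paper's proof: the lower bound is the volume inequality, the upper bound $\lceil\sqrt{t}\,\rceil$ comes from a valuation non-positive at infinity whose centres include the first $\lceil\sqrt{t}\,\rceil$ free points (you reach it by quoting Theorem \ref{zxz}, whose proof is exactly that construction via Propositions \ref{DeltaD} and \ref{51} plus Corollary \ref{uuu}), and irrational $t$ is handled through Proposition \ref{proposiciondos}. The paper's treatment of the irrational case is precisely the one-line reduction ``we may assume $t$ runs over the rationals'' by Proposition \ref{proposiciondos}, so the extra care you take there --- and your alternative of extending Propositions \ref{DeltaD} and \ref{51} to irrational valuations and then applying Corollary \ref{uuu} --- only makes explicit what the paper leaves implicit.
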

\begin{proof}

Notice that, by Proposition \ref{proposiciondos}, we can assume that $t$ runs over the set of rational numbers. So, let $t\geq 2$ be a positive rational number and let $\omega_t$ be a (divisorial) valuation in ${\mathcal V}_{\Theta_t}$ such that the strict transforms of the line at infinity pass exactly through the first $\left\lceil \sqrt{t} \right\rceil$ points of the configuration ${\mathcal C}_{\omega_t}$. This means that  $\omega_t(v)=\left\lceil \sqrt{\betabarra_1/\betabarra_0} \right\rceil \betabarra_0$, where $\betabarra_0$ and $\betabarra_1$ are the first two maximal contact values of $\omega_t$. Notice that $[\omega_t(v)]^2\geq t$ and, therefore, $\omega_t$ is non-positive at infinity. Hence $\hat{\mu}^N(\omega_t)=\left\lceil \sqrt{t} \right\rceil$ by Proposition \ref{51}, and thus  the quotient
\[
\frac{\hat{\mu}^N (\omega_t)}{\sqrt{t}} = \frac{\left\lceil \sqrt{t}\right\rceil}{\sqrt{t}}
\]
converges to $1$ when $t$ tends to infinity. Finally, by Corollary \ref{uuu} we have that
$$\frac{\hat{\mu}^N(\nu_t)}{\sqrt{t}}\leq \frac{\hat{\mu}^N(\omega_t)}{\sqrt{t}}$$
and the result follows because the quotient $\frac{\hat{\mu}^N(\nu_t)}{\sqrt{t}}$ is greater than or equal to 1.
\end{proof}

\section*{Acknowledgements}
The authors would like to thank A. K{\"u}ronya and J. Ro\'e for helpful comments.


\begin{thebibliography}{99}
\footnotesize \setlength{\baselineskip}{3mm}
\bibitem{ja1} S.S. Abhyankar, Local uniformization on algebraic surfaces over ground field of characteristic $p \neq 0$, {\it Ann. Math.} {\bf 63} (1956) 491--526.

\bibitem{ja2} S.S. Abhyankar, On the valuations centered in a local domain, {\it Amer. J. Math.} {\bf 78} (1956) 321--348.

\bibitem{ja4} S.S. Abhyankar, T.T. Moh, Newton-Puiseux expansion and generalized Tschirnhausen transformation I and II, {\it J. reine angew. Math.} {\bf 260} (1973) 47--83 and {\bf 261} (1973) 29--54.






\bibitem{bauer} T. Bauer et al. A primer on Seshadri constants, {\it Contemporary Math.} {\bf 496} (2009) 33-70.

\bibitem{bou} S. Boucksom, C. Favre, M. Jonsson, A refinement of Izumi's Theorem, Valuation Theory in Interaction 51--81. EMS Ser. Cong. Rep., Z\"urich, 2014.

\bibitem{b-k-m-s} S. Boucksom, A. K\"uronya, C. Maclean, T. Szemberg, Vanishing sequences and Okounkov bodies, {\it Math. Ann.} {\bf 361} (2015) 811-834.

\bibitem{campillo} A. Campillo, {\it Algebroid curves in positive characteristic}, Lect. Notes Math. {\bf 813}, Springer-Verlag, 1980.

\bibitem{C-D-S} A. Campillo, F. Delgado, S.M. Gusein-Zade, On generators  of the semigroup of a plane curve singularity, {\it J. London Math. Soc.} {\bf 60} (1999) 420--430.





\bibitem{casas} E. Casas-Alvero, {\it Singularities of plane curves}, London Math. Soc. Lect. Note Ser. {\bf 276}, Cambridge Univ. Press, 2000.




\bibitem{cil} C. Ciliberto et al., Newton-Okounkov bodies sprouting on the valuative tree, arXiv:1602.02074v1.

\bibitem{c-ein-l} S.D. Cutkosky, L. Ein, R. Lazarsfeld, Positivity and complexity of ideal sheaves, {\it Math. Ann.} {\bf 321} (2001) 213-234.

\bibitem{delgado} F. Delgado, The semigroup of a curve singularity with several branches. {\em Manuscripta Math. } {\bf 59}
(1987) 347--374.

\bibitem{d-g-n} F. Delgado, C. Galindo, A. N\'u\~nez, Saturation for valuations
on two-dimensional regular local rings, {\em Math. Z.} {\bf 234}
(2000) 519--550.

\bibitem{5Nach} J.P. Demailly, Singular Hermitian metrics on positive line bundles, complex algebraic varieties, Lect. Notes Math. {\bf 1507}, Springer-Verlag, 1992, 87-104.

\bibitem{d-h-k-r-s} M. Dumnicki, B. Harbourne, A. K\"uronya, J. Ro\'e, T. Szemberg, Very general monomial valuations of $\mathbb{P}^2$ and a Nagata type conjecture, To appear in {\it Comm. Anal. Geom.}

\bibitem{Nach} M. Dumnicki, A. K\"{u}ronya, C. Maclean, T. Szemberg, Seshadri constants via functions on Newton-Okounkov bodies, {\it Math. Nachr.} (2016).

\bibitem{e-l-s} L. Ein, R. Lazarsfeld, K. Smith, Uniform approximation of Abhyankar valuation ideals in smooth function fields, {\em Amer. J. Math.} {\bf 125} (2003) 409--440.


\bibitem{fj} C. Favre, M. Jonsson, {\it The valuative tree,} Lect. Notes Math. {\bf 1853}. Springer-Verlag, 2004.





\bibitem{galindo} C. Galindo, Plane valuations and their completions, {\it Comm. Algebra} {\bf 23} (1995) 2107-2123.

\bibitem{gm} C. Galindo, F. Monserrat, The total coordinate ring of a smooth projective surface, {\it J. Algebra} {\bf 284} (2005) 91--101.

\bibitem{hel} C. Galindo, F. Monserrat, The cone of curves associated to a plane configuration, {\it Comm. Math. Helv.} {\bf 80} (2005) 75--93.




\bibitem{cox} C. Galindo, F. Monserrat, The cone of curves and the Cox ring of a rational surface given by divisorial valuations, {\it Adv. Math.} {\bf 290} (2016) 1040--1061.

\bibitem{ja15} S. Greco, K. Kiyek, General elements in complete ideals and valuations centered at a two-dimensional regular local ring, in: Algebra, Arithmetic, and Geometry, with Applications, Springer, 2003, 381--455.

\bibitem{greuel} G. M. Greuel, C. Lossen, E. Shustin, Singular algebraic curves, unpublished book, xii+400 pp.



\bibitem{knaf} H. Knaf, F.V. Kuhlmann, Abhyankar places admit local uniformization in any characteristic, {\it Ann. Scient. \'Ec. Norm. Sup.} {\bf 38} (2005) 833--846.

























\bibitem{ja20} M. Spivakovsky, Valuations in function fields of surfaces, {\it Amer. J. Math.} {\bf 112} (1990) 107--156.


\bibitem{tei} B. Teissier, Valuations, deformations, and toric geometry. Valuations theory and its applications. Fields Inst. Comm. {\bf 33} (2003) 361--459. Amer. Math. Soc. Providence R.I.





\bibitem{ja21} O. Zariski, The reduction of singularities of an algebraic surface, {\it Ann. Math.} {\bf 40} (1939) 639--689.



\bibitem{zar1} O. Zariski, Local uniformization on algebraic varieties,
{\it Ann. Math.} {\bf 41} (1940) 852--896.

\bibitem{ja23} O. Zariski, P. Samuel, {\it Commutative algebra}, vol. II, Springer-Verlag, 1960.



























\end{thebibliography}
\end{document}